\numberwithin{equation}{section}
\theoremstyle{plain}
\newtheorem{theorem}{Theorem}[section]
\newtheorem{defn}[theorem]{Definition}
\newtheorem{assumption}[theorem]{Assumption}
\newtheorem{lemma}[theorem]{Lemma}
\newtheorem{corollary}[theorem]{Corollary}
\theoremstyle{remark}
\newtheorem{remark}[theorem]{Remark}
\crefname{assumption}{assumption}{assumptions}
\crefname{theorem}{Theorem}{Theorems}
\crefname{remark}{remark}{remarks}
\newcommand{\pushright}[1]{\ifmeasuring@#1\else\omit\hfill$\displaystyle#1$\fi\ignorespaces}
\newcommand{\pushleft}[1]{\ifmeasuring@#1\else\omit$\displaystyle#1$\hfill\fi\ignorespaces}
\newcommand{\negA}[1]{(-A)^{#1}}
\newcommand{\DAs}[1]{\mathcal{D}((-A)^{#1})}
\newcommand{\norm}[1]{\left\lVert#1\right\rVert}
\newcommand{\Bnorm}[1]{\left\lVert#1\right\rVert_{L_{0}^{2}}}
\newcommand{\Onorm}[2]{\left\lVert#1\right\rVert_{\mathcal{L}(#2)}}
\newcommand{\HSnorm}[3]{\left\lVert#1\right\rVert_{HS(#2,#3)}}
\newcommand{\Econd}[1]{\mathbb{E}\left[ #1 \hspace{0.1cm} \middle| \mathcal{G}_{n} \right]}
\newcommand{\Econdclose}[1]{\left. #1\hspace{0.1cm} \middle| \mathcal{G}_{n} \right]}
\newcommand{\Econdopen}[1]{\mathbb{E}\left[ #1 \right.}
\newcommand{\EcondF}[1]{\mathbb{E}\left[ #1 \hspace{0.1cm} \middle| \mathcal{F}_{\rho} \right]}
\newcommand{\cond}[2]{\hspace{0.1cm} | \mathcal{#1}_{#2} ]}
\newcommand{\Lcondnorm}[1]{\norm{#1}_{L^{2}(\Omega|\mathcal{G}_{n};H)} }
\newcommand{\Lnorm}[1]{\norm{#1}_{L^{2}(\Omega;H)} }
\newcommand{\SE}[1]{ S_{h}(T-t_{#1})E_{k}(t_{#1}) }
\newcommand{\SM}[1]{ S_{h}(T-t_{#1}) }
\newcommand{\ito}{It\^o}
\newcommand{\itos}{It\^o's}
\newcommand{\adptone}{ASETD1}
\newcommand{\adptzero}{ASETD0}
\newcommand{\stp}{NSEE}
\newcommand{\tamed}{TEM}
\newcommand{\tamedexp}{TSETD0}
\newcommand{\breakgap}{\qquad}
\newcommand{\bigbreakgap}{\qquad\qquad}
\newcommand{\LF}{L_{F}}
\newcommand{\LB}{L_{B1}}
\newcommand{\LBr}{L_{B2}}
\newcommand{\Kreg}{K_{1}}
\newcommand{\Cs}{C_{s}}
\newcommand{\KF}{K_{2}}
\newcommand{\KFsq}{K_{3}}
\newcommand{\KB}{K_{4}}
\newcommand{\CT}{C_{T}}
\newcommand{\CI}[1]{C_{#1}}
\newcommand{\Cmain}{C(T,\rho,X_{0})}
\newcommand{\Cspatial}{C(T,X_{0})}
\newcommand{\supT}{\sup_{s\in [0,T]}}
\newcommand{\supTE}[1]{\supT \mathbb{E} \left[ #1 \right] }
\newcommand{\Konep}[1]{\tilde{K}_{1,#1}}
\begin{document}
\bibliographystyle{apa}

\title[Adaptive time-stepping for SPDEs]{Adaptive time-stepping for Stochastic Partial Differential Equations with non-Lipschitz drift}

\author{Stuart Campbell}
\address{Department of Mathematics, Maxwell Institute, MACS, Heriot-Watt University, Edinburgh, EH14 4AS, UK}
\email{sc58@hw.ac.uk}

\author{Gabriel Lord}
\address{Department of Mathematics, Maxwell Institute, MACS, Heriot-Watt University, Edinburgh, EH14 4AS, UK}
\address{Department of Mathematics, IMAPP, University of Radboud, 6500 GL Nijmegen, The Netherlands}
\email{g.j.lord@hw.ac.uk}

\keywords{Stochastic Partial Differential Equations, strong convergence, Allen-Cahn equation, adaptive, non-globally Lipschitz}
\subjclass[2010]{65C30, 60H15, 65N12}

\begin{abstract}
We introduce an explicit, adaptive time-stepping scheme for the simulation of SPDEs with one-sided Lipschitz drift coefficients. Strong convergence rates are proven for the full space-time discretisation with multiplicative trace-class noise by considering the space and time discretisation separately. Adapting the time-step size to ensure strong convergence is shown numerically to produce more accurate solutions when compared to alternative fixed time-stepping strategies for the same computational effort.
\end{abstract}

\maketitle

\section{Introduction}

We construct an adaptive time-stepping method to ensure strong convergence of the numerical approximation to a semilinear stochastic partial differential equation (SPDE) with one-sided Lipschitz drift and multiplicative trace-class noise. We consider SPDEs of the form 
\begin{equation}\label{eq_SPDE_def}
dX = \left[ -AX + F(X) \right] dt + B(X) dW, \quad X(0) = X_{0}, t\in [0,T], T>0,
\end{equation}
on a separable Hilbert space $H=L^{2}(\mathbb{D})$, with domain $\mathbb{D} \subset \mathbb{R}^{d}$. The linear operator $-A$ is assumed to be the generator of an analytic semigroup, $S(t):=e^{-tA}$. Under additional assumptions, given in \Cref{sec:assumptions}, existence and uniqueness of the mild solution,
\begin{equation}\label{eq:mild_sol}
X(t) = S(t)X_{0} + \int_{0}^{t} S(t-s)F(X(s)) ds
+
\int_{0}^{t} S(t-s) B(X(s)) dW(s), 
\end{equation}
can be proved. It is known \citep{Hutzenthaler2011} that the standard Euler-Maruyama method for stochastic differential equations (SDEs) is not guaranteed to converge strongly when the drift does not satisfy a global Lipschitz condition. Unfortunately many applications of interest for both SDEs and SPDEs have non-Lipschitz drift functions. There are several directions that have been taken to ensure strong convergence in the case of non-globally Lipschitz drift for both SDEs and SPDEs. Firstly, implicit methods are strongly convergent for SDEs with non-Lipschitz drift, however are more computationally expensive than explicit methods. This can render them impractical, particularly for large-scale Monte-Carlo estimations or inverse problems. For this reason there has been much interest in the construction of explicit methods that ensure strong convergence under non-Lipschitz drift. 

Since their introduction for one-sided Lipschitz SDEs in \citep{hutzenthaler_tamed2012}, the class of tamed methods has seen widespread interest. These methods work by perturbing the drift function to ensure bounded moments of the numerical solution. Roughly speaking, the perturbation of the drift ensures bounded growth of the new drift function, with respect to the time-step size, and the perturbed drift function converges to the original drift function as the time-step is decreased towards zero.

An alternative method to ensure strong convergence for SDEs was derived in \citep{Fang2016,kellylord2017adaptive}. There, the time-step size is adapted to ensure convergence whilst the drift function is unchanged. It was shown that such an adaptive method can result in more accurate simulations when compared to tamed, fixed time-step methods and therefore lead to more efficient multi-level Monte-Carlo simulations. 

Compared to the SDE literature, there are far fewer methods available that ensure strong convergence for general one-sided Lipschitz SPDEs. In fact there are only two that we are aware of. Firstly, convergence and stability of a tamed Euler-Maruyuama scheme for SPDEs was proved in the variational setting in \citep{Gyongy2016}, which we later refer to as \tamed{} (Tamed Euler-Maruyuama) in \Cref{sec:numerics}. Secondly an  alternative method of taming the drift was introduced for SPDEs in \citep{Jentzen2015}, where the contributions of the nonlinear drift and diffusion are ``switched off'' if the nonlinearities are too large when compared to the scheme's time-step. The switching behaviour ensures control of the numerical scheme moments and allows for strong convergence to be proven. We later refer to this scheme as \stp{} (Nonlinearities Stopped Exponential Euler) in \Cref{sec:numerics}.

Strong convergence of a spatial semi-discrete approximation is achieved for general one-sided Lipschitz drift and additive noise via random PDE equivalence in \citep{cui2019}. Convergence in probability was proven in \citep{Anton2018} for the one dimensional non-linear heat equation. We make the distinction in this paper that we do not restrict to any specific drift function and consider multiplicative noise for the full spatial-temporal discretisation. 

For particular SPDEs with non-Lipschitz drift, there are many more schemes that ensure convergence, for example in \citep{Brehier2018b} and \citep{Wang2018}, strong convergence results are proven for the Allen-Cahn equation with additive space-time white noise via splitting.

In this paper we propose an adaptive time-stepping method for SPDEs that ensures strong convergence in the presence of non-Lipschitz drift with multiplicative noise. This is achieved by reducing the time-step of the scheme when the numerical drift response is large, ensuring at most linear growth of the numerical drift response with respect to the solution of the numerical scheme. The underlying numerical method is based on a finite element or spectral Galerkin exponential integrator, which was considered for multiplicative noise with a fixed time-step in e.g. \citep{lord2013stochastic}, \citep{Tambue2016}.

The content of the paper is organised as follows, in \Cref{sec:assumptions} we formalise the setting and assumptions made, as well as stating some well known properties of the semigroup $S(t)$. \Cref{sec:galerkin} is a brief overview of the necessary Galerkin methods for the spatial discretisation. In \Cref{sec:fullydiscrete} we give the fully discrete approximation and define our numerical scheme. The idea of what we call an \emph{admissible time-stepping strategy} is defined in \Cref{sec:time_stepping}. This ensures there is no unwanted growth in the numerical solution and is key to proving our main results of strong convergence rates, stated in \Cref{sec:mainresults}. Some numerical comparisons to fixed time-step methods are shown in \Cref{sec:numerics}. Due to the nature of the time-stepping strategies introduced, we require a conditional \ito{} isometry and conditional regularity in time of the mild solution, which we prove in \Cref{sec:cond_ito}. We conclude with the proofs of the main results in \Cref{sec:proofmain}.
 
\section{Setting and Assumptions}\label{sec:assumptions}
We consider the SPDE in \Cref{eq_SPDE_def} on a Hilbert space $H$ with norm $\norm{\cdot}$ and inner product $\langle \cdot,\cdot \rangle$. The linear operator $-A$ is assumed to be the generator of an analytic semigroup, $S(t):=e^{-tA}$ for $t \geq 0$. Both $F$ and $B$ are possibly nonlinear in $X$, with $F$ satisfying a monotone condition and a polynomial bound. The diffusion coefficient $B$ is assumed to be globally Lipschitz. By $\mathcal{L}(H)$ we denote the set of bounded linear operators from $H\rightarrow H$.

For the noise, we define a $Q$-Wiener process on the probability space $(\Omega,\mathcal{F},\mathbb{P},\{F_{t}\}_{t\geq 0})$ taking values in a Hilbert space $U$, with covariance operator $Q\in\mathcal{L}(U)$. 
We write $W(t)$ as
\begin{equation}
W(t) = \sum_{j=1}^{\infty} \sqrt{q_{j}}\chi_{j}\beta_{j}(t), \nonumber
\end{equation}
where $\{\chi_{j}\}_{j\in \mathbb{N}}$ are the eigenfunctions of $Q$
that form an orthonormal basis of $U$
(see \citep{DaPrato1992,lord2014introduction}). The $q_{j}$ are real
eigenvalues of $Q$ such that $q_{j}\geq 0$ and
$\sum_{j=1}^{\infty}q_{j}<\infty$. Finally the $\beta_{j}$ are
i.i.d. $\mathcal{F}_{t}$-adapted one-dimensional Brownian motions. 
To complete the setting for the diffusion coefficient $B$, we define a subset of the Hilbert space $U$ as $U_{0}:=\{ Q^{1/2} u: u\in U \}$. We take $B\in L_{0}^{2}$, the set of Hilbert-Schmidt operators from $U_{0}$ to $H$. That is $B: U_{0} \rightarrow H$ satisfies
\begin{equation}
\Bnorm{B} := 
\left( \sum_{j=1}^{\infty} \norm{ BQ^{1/2} \chi_{j}}^{2} \right)^{1/2} < \infty \nonumber,
\end{equation}
with $\{\chi_{j}\}$ an orthonormal basis of $U$.

We are interested in strong convergence of the numerical scheme to the true solution, that is convergence with respect to the norm
\begin{equation}
\Lnorm{\cdot} := \left( \mathbb{E}[\norm{\cdot}^{2} ]\right)^{\frac{1}{2} }. \nonumber
\end{equation}
During the analysis we need to consider conditional expectations in time to ensure the adaptive scheme maintains normally distributed Wiener increments and does not introduce any bias. Therefore we introduce the following notation
\begin{equation}
\Lcondnorm{\cdot} := 
\left( \Econd{\norm{\cdot}^{2}} \right)^{\frac{1}{2} }, \nonumber
\end{equation}
which is required when discussing the full space-time convergence of the adaptive scheme. The discrete-time filtration $\mathcal{G}_{n}$ will be defined later in \Cref{defn:Gn}. We now recall some standard properties of the semigroup $S(t)$, and state the precise assumptions on $F$ and $B$.
\begin{assumption}\label{assump_A_semigroup_generator}
The operator $-A:\mathcal{D}(-A) \rightarrow H$ is the generator of an analytic semigroup $S(t) = e^{-tA},t\geq 0$.
\end{assumption}
Let $\lambda_{j}>0$ and $\phi_{j}$ be the eigenvalues and eigenfunctions of $-A$ with $\lambda_{j}\leq \lambda_{j+1}$. Since $\lambda_{j}>0$, the exponential $e^{-tA}u
:=\sum_{j=1}^{\infty} e^{-\lambda_{j}t}\langle u, \phi_{j} \rangle \phi_{j}
\leq \sum_{j=1}^{\infty} \langle u, \phi_{j} \rangle \phi_{j}=:u$ for $t\geq 0$. Hence $\Onorm{S(t)}{H}=1$.
\begin{lemma}\label{lemma_Stv-v}
Under \Cref{assump_A_semigroup_generator} the following bounds hold for $0\leq \alpha \leq 1$, $\beta \geq 0$ and some $\Cs>0$,
\begin{align}
\norm{\negA{\beta}S(t)}_{\mathcal{L}(H)} \leq \Cs t^{-\beta}  \quad\text{ for } t>0 \nonumber \\
\norm{\negA{-\alpha}(I-S(t))}_{\mathcal{L}(H)} \leq \Cs t^{\alpha}  \quad\text{ for } t\geq 0 .\nonumber
\end{align}
\end{lemma}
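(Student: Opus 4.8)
The plan is to exploit the spectral decomposition of $-A$ already introduced above. Since $-A$ has the orthonormal eigenbasis $\{\phi_{j}\}$ with eigenvalues $\lambda_{j}>0$, every function of $-A$ acts as a Fourier multiplier in this basis: for a bounded multiplier sequence $(m_{j})$ the operator $u\mapsto\sum_{j}m_{j}\langle u,\phi_{j}\rangle\phi_{j}$ has operator norm $\sup_{j}|m_{j}|$ by Parseval. Both quantities in the statement are of this form, with $S(t)=e^{-tA}$ acting as multiplication by $e^{-\lambda_{j}t}$ and $\negA{\beta}$, $\negA{-\alpha}$ acting as multiplication by $\lambda_{j}^{\beta}$, $\lambda_{j}^{-\alpha}$ respectively. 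Thus the problem reduces to two elementary one-variable maximisations.

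First I would handle $\negA{\beta}S(t)$, whose multiplier is $\lambda_{j}^{\beta}e^{-\lambda_{j}t}$, so
\begin{equation}
\Onorm{\negA{\beta}S(t)}{H} = \sup_{j}\lambda_{j}^{\beta}e^{-\lambda_{j}t} \leq \sup_{\lambda>0}\lambda^{\beta}e^{-\lambda t}. \nonumber
\end{equation}
Writing $x=\lambda t$ gives $\lambda^{\beta}e^{-\lambda t}=t^{-\beta}x^{\beta}e^{-x}$, and the function $x\mapsto x^{\beta}e^{-x}$ attains its maximum $(\beta/e)^{\beta}$ at $x=\beta$. Taking $\Cs\geq(\beta/e)^{\beta}$ (and $\Cs\geq1$ to cover $\beta=0$) yields the first bound for all $t>0$.

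For the second estimate the multiplier of $\negA{-\alpha}(I-S(t))$ is $\lambda_{j}^{-\alpha}(1-e^{-\lambda_{j}t})$, and the same substitution $x=\lambda_{j}t$ gives
\begin{equation}
\Onorm{\negA{-\alpha}(I-S(t))}{H} \leq t^{\alpha}\sup_{x>0}x^{-\alpha}(1-e^{-x}). \nonumber
\end{equation}
The supremum is finite precisely because $0\leq\alpha\leq1$: splitting $1-e^{-x}=(1-e^{-x})^{\alpha}(1-e^{-x})^{1-\alpha}$ and using $1-e^{-x}\leq x$ in the first factor and $1-e^{-x}\leq1$ in the second gives $1-e^{-x}\leq x^{\alpha}$, so the supremum is bounded by $1$. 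This proves the second estimate, again with a universal constant $\Cs$.

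The only genuine subtlety is the reduction in the first paragraph: it relies on $-A$ being diagonalisable in an orthonormal basis, i.e.\ self-adjoint (or at least normal), which is the situation here since the eigenfunction expansion of $S(t)$ has already been used above. For a general analytic generator one would instead represent these operators through the Dunford--Taylor contour integral for $S(t)$ and its fractional-power modifications, and estimate them using the sectorial resolvent bounds; the elementary maximisations above are the concrete, self-adjoint counterpart of those contour estimates. I expect no further difficulty: once the multiplier form is fixed, both inequalities are routine calculus.
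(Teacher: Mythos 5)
Your proof is correct, but it follows a genuinely different route from the paper: the paper offers no argument at all, simply citing Henry's book, where both bounds are proven for a general sectorial operator via the Dunford--Taylor contour-integral representation of $S(t)$ and of the fractional powers, using only resolvent estimates on a sector. You instead diagonalise in the orthonormal eigenbasis $\{\phi_{j}\}$ and reduce everything to the two calculus facts $\sup_{x>0}x^{\beta}e^{-x}=(\beta/e)^{\beta}$ and $1-e^{-x}\leq x^{\alpha}$ for $0\leq\alpha\leq 1$; both computations are right, including the interpolation trick $1-e^{-x}=(1-e^{-x})^{\alpha}(1-e^{-x})^{1-\alpha}$ and the edge cases $\beta=0$ and $t=0$. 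What each approach buys: your spectral argument is elementary, self-contained, and yields explicit constants, and it matches the setting the paper de facto works in --- an orthonormal eigenbasis of $-A$ with positive eigenvalues is introduced immediately after \Cref{assump_A_semigroup_generator} and is indispensable for the spectral Galerkin discretisation anyway; the citation route, by contrast, covers the full generality of \Cref{assump_A_semigroup_generator}, which only requires that $-A$ generate an analytic semigroup, not that it be self-adjoint or have discrete spectrum. You correctly identify this as the one genuine gap in the multiplier reduction and sketch the right repair (sectorial resolvent bounds through the contour integral), so nothing essential is missing; strictly under \Cref{assump_A_semigroup_generator} alone your argument proves a special case, and the general statement rests on the cited sectorial-operator theory.
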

\begin{proof}
See \citep{henry2006geometric}.
\end{proof}
For $v \in \DAs{\alpha/2}$, $\alpha \in \mathbb{R}$, we use throughout the notation $\norm{v}_{\alpha}:= \norm{(-A)^{\alpha/2}v}$. Under this norm, and corresponding inner product $\langle (-A)^{\alpha/2}v, (-A)^{\alpha/2}v \rangle$, the space $\DAs{\alpha/2}$ is a Hilbert space. We apply \Cref{lemma_Stv-v} to attain the following useful inequality 
\begin{equation}\label{eq_I-St_v_bnd}
\norm{(I-S(t))v}^{2} 
	= 
	\norm{\negA{-\alpha/2}(I-S(t))\negA{\alpha/2}v}^{2} 
	\leq 
	\Cs t^{\alpha}\norm{v}_{\alpha}^{2},
\end{equation}
for $v \in \DAs{\alpha/2}$ and $0\leq \alpha \leq 1$.

The usual global Lipschitz assumption on $F$ is weakened to the following one-sided Lipschitz condition and a polynomial bound on the Fr\'echet derivative. 
\begin{assumption}\label{assump_F_local_lipschitz}
The function $F$ satisfies a one sided Lipschitz growth condition. That is, there exists a constant $\LF{}>0$ such that for all $X,Y\in H$,
\begin{equation}
\langle F(X)-F(Y),X-Y \rangle \leq \LF{} \norm{X-Y}^{2}. \nonumber
\end{equation}
\end{assumption}
\begin{assumption}\label{assump_DF_growth}
The Frechet derivative of $F$ is bounded polynomially, that is for some $c_{1},c_{2}\in(0,\infty)$,
\begin{equation}
\Onorm{ DF(X) }{H} \leq c_{1}(1 + \norm{X}^{c_{2}}). \nonumber
\end{equation}

\end{assumption}
Under \Cref{assump_DF_growth}, the growth of $F$ can be bound polynomially by 
\begin{equation}\label{eq_F_growth}
\norm{F(X)} \leq c_{3}(1 + \norm{X}^{c_{2}+1}),
\end{equation}
with $c_{3}:= 2c_{1} + \norm{F(0)}$.
A global Lipschitz assumption on the diffusion term $B$ is assumed throughout this work. 
\begin{assumption}\label{assump_B_lip}
The diffusion coefficient $B$ satisfies a global Lipschitz growth condition, i.e. there exists a constant $\LB>0$ such that
\begin{equation}
\norm{ B(X)-B(Y) }_{L_{0}^{2}} \leq \LB \norm{X-Y},\quad \forall X,Y \in H. \nonumber
\end{equation}
\end{assumption}
\begin{assumption}\label{assump_B_growth}
For some fixed $r \in [0,1)$  and fixed constant $\LBr>0$ the following bound holds
\begin{equation}
\norm{ \negA{r/2}B(X) }_{L_{0}^{2}} \leq \LBr (1+\norm{X}_{r}),\quad \text{for all } X \in \DAs{r/2}. \nonumber
\end{equation}
\end{assumption}
The parameter $r$ in \Cref{assump_B_growth} determines the spatial regularity of the noise and hence the mild solution. The classical result of \citep{DaPrato1992} proves existence and uniqueness for the mild solution of \Cref{eq_SPDE_def} under global Lipschitz assumption on the drift function $F$. We require a similar result for the weaker assumptions on $F$, which was shown in \citep{Jentzen2015}.
\begin{theorem}\label{thm:moment_bounds}
Under \Cref{assump_A_semigroup_generator,assump_F_local_lipschitz,assump_DF_growth,assump_B_lip,assump_B_growth} with initial data $X_{0} \in L^{2}(\mathbb{D},\DAs{1/2})$, there exists a mild solution, unique up to equivalence given by
\begin{equation}\label{eq_SPDE_mild_sol}
X(t) = S(t)X_{0} + \int_{0}^{t} S(t-s)F(X(s)) ds
+
\int_{0}^{t} S(t-s) B(X(s)) dW(s), \nonumber
\end{equation}
Further that for all $p \geq 2$ and all $\eta \in [0,1)$,
\begin{equation}\label{eq_moment_bounds}
\sup_{t\in [0,T]} \mathbb{E}\norm{ X(t) }^{p}_{\eta} < \infty. 
\end{equation}
\end{theorem}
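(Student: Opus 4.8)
The plan is to treat existence, uniqueness and the regularity bound \eqref{eq_moment_bounds} in three stages, the first being a truncation-and-localisation argument, since the one-sided condition in \Cref{assump_F_local_lipschitz} does not provide a contraction for a direct fixed-point scheme. For each $R>0$ I would replace $F$ by a globally Lipschitz truncation $F_{R}$ (for example $F_{R}(X)=F(\theta_{R}(\norm{X})X)$ with $\theta_{R}$ a smooth cut-off equal to $1$ on $[0,R]$ and vanishing past $2R$), which agrees with $F$ on the ball of radius $R$. Since $B$ is already globally Lipschitz by \Cref{assump_B_lip}, the truncated equation has globally Lipschitz coefficients, so the classical Banach fixed-point argument of \citep{DaPrato1992} in the space of $H$-valued predictable processes with finite $\sup_{t}\mathbb{E}\norm{\cdot}^{p}$ produces a unique mild solution $X_{R}$.

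Second, I would derive a priori moment bounds in $H$ that are uniform in the truncation level $R$. Working with the Yosida (or spectral Galerkin) approximation of the generator, so that the approximating equations admit strong solutions and \itos{} formula applies, I would compute $d\norm{X_{R}}^{p}$ and then pass to the limit. The two decisive contributions are the dissipativity of the generator, $\langle X_{R},-AX_{R}\rangle\leq 0$, and the drift term, which is controlled by \Cref{assump_F_local_lipschitz} with $Y=0$,
\begin{equation}
\langle F(X_{R}),X_{R}\rangle \leq \LF\norm{X_{R}}^{2}+\langle F(0),X_{R}\rangle \leq \Big(\LF+\tfrac12\Big)\norm{X_{R}}^{2}+\tfrac12\norm{F(0)}^{2}. \nonumber
\end{equation}
Together with \Cref{assump_B_lip} for the quadratic-variation term and the Burkholder--Davis--Gundy inequality for the martingale term, this yields $\tfrac{d}{dt}\mathbb{E}\norm{X_{R}}^{p}\leq C(1+\mathbb{E}\norm{X_{R}}^{p})$, and Gronwall gives $\sup_{t\in[0,T]}\mathbb{E}\norm{X_{R}(t)}^{p}\leq C$ with $C$ independent of $R$. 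Setting $\tau_{R}=\inf\{t:\norm{X_{R}(t)}\geq R\}$, the uniform bound and Markov's inequality force $\mathbb{P}(\tau_{R}<T)\to 0$, so the localised solutions coincide on $[0,\tau_{R}]$ and glue to a global mild solution $X$ with $\sup_{t}\mathbb{E}\norm{X(t)}^{p}<\infty$. Uniqueness follows by applying the same computation to the difference of two solutions, where \Cref{assump_F_local_lipschitz} controls $\langle F(X)-F(Y),X-Y\rangle$ and Gronwall yields pathwise uniqueness.

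Third, for the spatial regularity I would apply $\negA{\eta/2}$ to the mild formula and bound the three resulting terms in $L^{p}(\Omega)$. The semigroup term is controlled by $\norm{X_{0}}_{\eta}\leq\norm{X_{0}}_{1}$ for $\eta\leq 1$, using the initial regularity $X_{0}\in\DAs{1/2}$ (with its assumed $p$-th moment). For the deterministic integral, \Cref{lemma_Stv-v} gives $\Onorm{\negA{\eta/2}S(t-s)}{H}\leq\Cs(t-s)^{-\eta/2}$, so the time integral converges since $\eta/2<1$, and the polynomial growth \eqref{eq_F_growth} of $\norm{F(X(s))}$ is absorbed using the $H$-norm moments from the previous step. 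For the stochastic integral I would use BDG (or the factorisation method) together with the splitting $\negA{\eta/2}=\negA{(\eta-r)/2}\negA{r/2}$ and \Cref{assump_B_growth}, giving
\begin{equation}
\Bnorm{\negA{\eta/2}S(t-s)B(X(s))} \leq \Cs(t-s)^{-(\eta-r)/2}\,\LBr\big(1+\norm{X(s)}_{r}\big), \nonumber
\end{equation}
where the kernel is square-integrable precisely because $\eta-r<1$, which holds for every $\eta\in[0,1)$. This closes by a bootstrap: at the level $\eta=r$ the diffusion estimate has no singular kernel and is self-referential in $\mathbb{E}\norm{X(s)}_{r}^{p}$, so a (singular) Gronwall inequality yields $\sup_{t}\mathbb{E}\norm{X(t)}_{r}^{p}<\infty$; the cases $\eta\in(r,1)$ then follow by inserting this bound against the integrable singular kernel, and the cases $\eta<r$ are immediate from the ordering of the fractional norms.

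The main obstacle is the second stage: because $\norm{F(X)}$ grows super-linearly, there is no contraction and no direct route to moment bounds, so everything hinges on the fact that the one-sided Lipschitz structure in \Cref{assump_F_local_lipschitz} tames the pairing $\langle F(X),X\rangle$ to merely quadratic growth, which is exactly what makes the energy estimate close. The accompanying technical burden is justifying \itos{} formula for the mild solution via the Yosida/Galerkin regularisation and checking that the estimates pass to the limit uniformly in both the regularisation and the truncation parameters; the singular Gronwall argument underpinning the $\norm{\cdot}_{\eta}$ bounds is a secondary but still nontrivial point.
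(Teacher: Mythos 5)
Your proposal cannot be checked against a proof in the paper, because the paper does not prove this theorem at all: the entire statement --- existence, uniqueness, and the moment bounds \cref{eq_moment_bounds} --- is quoted from the literature, the proof consisting of a single citation to \citep{Jentzen2015} (Proposition 7.3 there for the moments). So the comparison is between your self-contained sketch and a deferral. Your route is the classical one for one-sided Lipschitz drift: radial truncation $F_{R}$ to restore global Lipschitz coefficients and invoke the Da Prato--Zabczyk fixed point, an energy estimate for $\norm{X_{R}}^{p}$ via Yosida/Galerkin regularisation and \itos{} formula in which the one-sided condition tames $\langle F(X),X\rangle$ to quadratic growth, localisation by exit times $\tau_{R}$ and gluing, and finally a bootstrap in the fractional norms $\norm{\cdot}_{\eta}$ using the splitting $\negA{\eta/2}=\negA{(\eta-r)/2}\negA{r/2}$, \Cref{assump_B_growth}, and a singular Gronwall inequality --- which is exactly the shape of estimate that Proposition 7.3 of \citep{Jentzen2015} supplies. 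What your approach buys is a proof tailored to \Cref{assump_A_semigroup_generator,assump_F_local_lipschitz,assump_DF_growth,assump_B_lip,assump_B_growth} and transparency about where each hypothesis enters; what the citation buys the authors is avoidance of the genuinely heavy step you correctly identify, namely justifying \itos{} formula for a mild solution and passing limits uniformly in the regularisation and truncation parameters. Two points would need care in a written-out version: (i) in the energy estimate you pair $F$, not $F_{R}$, with $X_{R}$ --- either verify that the radial cutoff preserves the one-sided bound uniformly in $R$ (it does: with $\lambda=\theta_{R}(\norm{X})\in[0,1]$ one has $\langle F(\lambda X),X\rangle\leq \LF\lambda\norm{X}^{2}+\norm{F(0)}\,\norm{X}$), or work only up to $\tau_{R}$, where $F_{R}=F$; also note BDG is unnecessary for the pointwise-in-time bound, since after localisation the martingale term has zero expectation; (ii) the hypothesis $X_{0}\in L^{2}(\mathbb{D},\DAs{1/2})$ supplies only second moments of the data, so the conclusion for every $p\geq 2$ implicitly requires $p$-th moments of $\norm{X_{0}}_{1}$ (e.g.\ deterministic $X_{0}$) --- a gap in the statement itself, which your parenthetical ``with its assumed $p$-th moment'' quietly absorbs, exactly as the citation does.
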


\begin{proof}
See \citep{Jentzen2015}, in particular Proposition 7.3 for the moment bounds. 
\end{proof}

\section{Spatial Discretisation}\label{sec:galerkin}
We now turn our attention to the numerical scheme and present a brief overview of the relevant Galerkin methods required for the spatial discretisation of the SPDE problem in this section. A detailed presentation of these methods can be found in e.g. \citep{thomee2006}. The parameter $h\in (0,1)$ will determine the spatial granularity of the approximation. Let $V_{h} \subset V:=\DAs{\frac{1}{2}}$ be a sequence of finite dimensional subspaces of $V$ and define the Riesz projection $R_{h}: V \rightarrow V_{h}$ by
\begin{equation}
(-AR_{h}v,w_{h}) = a(v,w_{h}) := (A^{1/2}v,A^{1/2}w_{h}) \quad \text{for all } v\in V, w_{h}\in V_{h}. \nonumber
\end{equation}
We require an assumption on the order of the approximation of elements in $V$ by elements in $V_{h}$. We show, in \cref{sec:FEM,sec:spectral}, this assumption is satisfied by the usual finite element and spectral Galerkin methods.
\begin{assumption}
For $V_{h}\subset V$ there exists a constant $C$ such that 
\begin{equation}
\norm{R_{h}v-v} \leq Ch^{s}\norm{v}_{s} \quad \text{for all } v\in \DAs{s/2}, \quad s\in\{1,2\}, h\in (0,1). \nonumber
\end{equation}
\end{assumption}
As usual we first consider the linear deterministic problem. That is, given $u(0)=v$ find $u\in V$ such that
\begin{equation}
\frac{du}{dt} = -Au, \text{ for } t\in (0,T]. \nonumber
\end{equation}
We define $-A_{h}:V_{h}\rightarrow V_{h}$ as the discrete version of the linear operator $A$ via the relation
\begin{equation}
(-A_{h}v_{h},w_{h}) = a(v_{h},w_{h}) \text{ for all } v_{h},w_{h} \in V_{h}. \nonumber 
\end{equation}
The discrete operator $-A_{h}$ is also the generator of an analytic semigroup $S_{h}(t):= e^{-tA_{h}}$ and therefore satisfies similar smoothing properties as $S(t)$ does in \Cref{lemma_Stv-v} \citep{thomee2006}. 

We require two additional projection operators to discretise the SPDE, firstly the orthogonal projection operator $P_{h}:L^{2}(\mathcal{D})\rightarrow V_{h}$ defined by
\begin{equation}
(P_{h}v,w) = (v,w) \quad \text{ for all } v \in L^{2}(\mathcal{D}), w\in V_{h}. \nonumber
\end{equation}
Note that $\norm{P_{h}v}\leq \norm{v}$ for all $v \in L^{2}(\mathcal{D})$ and therefore the operator norm $\Onorm{P_{h}}{H} \leq 1$.

The second projection operator $P_{J}$, is the projection of $u \in L^{2}(\mathbb{D})$ onto the first $J$ terms of an orthonormal basis $\{ \chi_{j} \}$ of $L^{2}(\mathbb{D})$, 
\begin{equation}\label{eq:PJ}
P_{J} u := \sum_{j=1}^{J} \langle \chi_{j}, u \rangle \chi_{j}. 
\end{equation}
It is easily seen that $\Onorm{P_{J}}{H}\leq 1$. We finish this section with two examples that fall under the above framework, firstly the standard finite element method and secondly the spectral Galerkin method.

\subsection{Finite Element Method}\label{sec:FEM}
Assume that $\mathbb{D} \subset \mathbb{R}^{d}$ for $d=1,2,3$ and $\mathbb{D}$ is a bounded convex domain that either has a smooth boundary or is a convex polygon. We define two spaces, $\mathbb{H}\subset V$ which depend on the boundary conditions of the SPDE. For Dirichlet boundary conditions we have that
\begin{equation}
V = \mathbb{H} = H_{0}^{1}(\mathbb{D}) \nonumber
\end{equation}
and for Robin boundary conditions
\begin{align*}
V &= H^{1}(\mathbb{D}) \\
\mathbb{H} &= \{ v \in H^{2}(\mathbb{D}) \frac{ \partial v}{\partial n} + c_{r}v = 0 \text{ on } \partial \mathbb{D} \}, \quad c_{r}\in \mathbb{R}.
\end{align*}
The space $\mathbb{H}$ encodes the boundary conditions of the SPDE. The following characterisations of $\DAs{s/2}$ for $s\in\{1,2\}$ are well known, e.g. \citep{Larsson1992,elliot1992},
\begin{align*}
\norm{v}_{H^{s}(\mathbb{D})} 
&\equiv \norm{v}_{s}, \text{ for all } v\in \DAs{s/2} \\
\DAs{s/2} 
&= \mathbb{H} \cap H^{s}(\mathbb{D}) \quad (\text{Dirichlet B.C.s}) \\
\DAs{} &= \mathbb{H}, \quad \DAs{1/2} = H^{1}(\mathbb{H}) \quad (\text{Robin B.C.s}).
\end{align*}
Let $\mathcal{T}_{h}$ be a $d$-dimensional triangulation of the domain $\mathbb{D}$ satisfying the usual regularity assumptions with maximal mesh size $h$. Then $V_{h} \subset V$ is the space of continuous functions that are piecewise linear over the mesh $\mathcal{T}_{h}$.

\subsection{Spectral Galerkin Method}\label{sec:spectral}
For the spectral Galerkin method to be applicable, we require that the combination of domain $\mathbb{D} \subset \mathbb{R}^{d}$, operator $-A$, and boundary data, has eigenvalues and orthonormal eigenfunctions that are explicitly computable. For example when $\mathbb{D}=[0,a]$ with Dirichlet or periodic boundary conditions and $-A$ is equal to the Laplacian. In this setting we can compute the eigenvalues $\lambda_{j}$ and eigenfunctions $\phi_{j}$ of $A$. By setting $h:= \lambda_{J+1}^{-1/2}$ for some $J\in \mathbb{N}$, the spaces $V_{h}$ are then defined as
\begin{equation}
V_{h}:= \text{span} \{ \phi_{j}: j=1,\hdots J\} \nonumber.
\end{equation}

\section{Fully Discrete Scheme}\label{sec:fullydiscrete}
In the notation of the previous section, the semi-discrete problem can be defined as the mild solution to
\begin{equation}\label{eq:semidiscrete_def}
dX_{h} = \left[ -A_{h}X_{h} + P_{h}F(X_{h}) \right] dt + P_{h}B(X_{h}) P_{J}dW, \quad
X_{h}(0) = P_{h}X_{0},
\end{equation}
that is
\begin{equation}\label{eq:SPDE_SD_sol}
\begin{split}
X_{h}(T) &= S_{h}(T)P_{h}X_{0} + 
\int_{0}^{T} S_{h}(T-s)P_{h}F(X_{h}(s)) ds \\
&\breakgap +
\int_{0}^{T} S_{h}(T-s)P_{h}B(X_{h}(s)) P_{J}dW(s). 
\end{split}
\end{equation}
To obtain a fully discrete scheme, we apply the approximations in time
\begin{alignat}{3}\label{eq:phi1_approx}
F(X_{h}(s)) 
	&\approx 
	F(X_{h}(t_{n})),
	& s\in [t_{n}, t_{n+1}], \nonumber 
	\\
S_{h}(t_{n+1}-s)P_{h}B(X_{h}(s)) 
	&\approx 
	S_{h}(\Delta t_{n+1})P_{h}B(X_{h}(t_{n})), \quad
	& s\in [t_{n}, t_{n+1}], 
\end{alignat}
to \cref{eq:SPDE_SD_sol}, which yields
\begin{equation}\label{eq_SPDE_num_sol}
\begin{split}
X_{h}^{N} &= S_{h}(T)P_{h}X_{0} + 
\sum_{k=0}^{N-1}\bigg{(}\int_{t_{k}}^{t_{k+1}} S_{h}(T-s)P_{h}F(X_{h}^{k})ds \bigg{)} \\
&\breakgap +
\sum_{k=0}^{N-1}\int_{t_{k}}^{t_{k+1}}S_{h}(T-t_{k})P_{h}B(X_{h}^{k})P_{J} dW(s). 
\end{split}
\end{equation}
Re-writing over one step leads to our adaptive numerical scheme \adptone{},
\begin{equation}\label{eq:scheme_adapt1}
\begin{split}
X_{h}^{n+1} &= e^{-\Delta t_{n+1}A_{h}}\left( X_{h}^{n}  + P_{h}B(X_{h}^{n})P_{J} \Delta W_{n+1} \right) \\
&\breakgap + 
\Delta t_{n+1} \varphi_{1}(\Delta t_{n+1}A_{h}) P_{h}F(X_{h}^{n}), 
\end{split}
\end{equation}
with the standard $\varphi_{1}$ function defined by $\varphi_{1}(z):= \frac{e^{z}-1}{z}.$
We can rewrite \Cref{eq:scheme_adapt1} as
\begin{align}
X_{h}^{n+1} 
&= X_{h}^{n}  + P_{h}B(X_{h}^{n})P_{J} \Delta W_{n+1} \nonumber \\
&+ 
\Delta t_{n+1} \varphi_{1}(\Delta t_{n+1}A_{h})\left( 
A_{h}(X_{h}^{n}  + P_{h}B(X_{h}^{n})P_{J} \Delta W_{n+1}) + 
P_{h}F(X_{h}^{n}) \right),  \nonumber
\end{align}
in order to compute only a single matrix exponential at each time-step, which may be more efficient for finite element spatial discretisations. 
If instead we apply the approximations in time
\begin{alignat}{3}\label{eq:phi0_approx}
S_{h}(t_{n+1}-s)P_{h}F(X_{h}(s)) 
	&\approx 
	S_{h}(\Delta t_{n+1})P_{h}F(X_{h}(t_{n})),
	& s\in [t_{n}, t_{n+1}], \nonumber 
	\\
S_{h}(t_{n+1}-s)P_{h}B(X_{h}(s)) 
	&\approx 
	S_{h}(\Delta t_{n+1})P_{h}B(X_{h}(t_{n})), \quad
	& s\in [t_{n}, t_{n+1}],
\end{alignat}
into \cref{eq:SPDE_SD_sol} we obtain the solution
\begin{equation}
\begin{split}
X_{h}^{N} &= S_{h}(T)P_{h}X_{0}  + 
\sum_{k=0}^{N-1}\bigg{(}\int_{t_{k}}^{t_{k+1}} S_{h}(T-t_{k})P_{h}F(X_{h}^{k})ds \bigg{)} \\
&\breakgap +
\sum_{k=0}^{N-1}\int_{t_{k}}^{t_{k+1}}S_{h}(T-t_{k})P_{h}B(X_{h}^{k})P_{J} dW(s). \label{eq_SPDE_num_sol0}
\end{split}
\end{equation}
Over one step, this defines an alternative adaptive scheme \adptzero
\begin{equation}\label{eq:scheme_adapt0}
X_{h}^{n+1} = \varphi_{0}(-\Delta t_{n+1}A_{h})\left( X_{h}^{n} + P_{h}F(X_{h}^{n})\Delta t_{n+1} + P_{h}B(X_{h}^{n})P_{J} \Delta W_{n+1} \right), 
\end{equation}
where $\varphi_{0}(z):= e^{z}$.
\subsection{Adaptive time-stepping and admissible strategies}\label{sec:time_stepping}
In this section we define the adaptive time-stepping strategy and outline the conditions required for the strong convergence of the numerical approximation.
\begin{defn}\label{defn:Gn}
Let each member of the family $\{t_{n}\}_{n\in\mathbb{N}}$ be an $\mathcal{F}_{t}$ - stopping time, that is $\{t_{n} \leq t\} \in \mathcal{F}_{t}$ for all $t \geq 0$ with $(\mathcal{F}_{t})_{t\geq 0}$ as the natural filtration of $W$. We define the discrete-time filtration $\{\mathcal{G}_{n}\}_{n\in\mathbb{N}}$ as
\[\mathcal{G}_{n} := \{ \mathcal{B} \in \mathcal{F} : \mathcal{B} \cap  \{t_{n} \leq t\} \in \mathcal{F}_{t} \}, \quad n\in\mathbb{N}. \]
\end{defn}
\begin{assumption}\label{assumption_timestepping}
We assume each $\Delta t_{n} := t_{n}-t_{n-1}$ is $\mathcal{G}_{n-1}$-measurable and $N$ is a random integer such that
\[ N := \max \{n\in\mathbb{N} : t_{n-1} < T \} \text{ and }  t_{N} = T. \]
Further, there exists $\Delta t_{\min}$, $\Delta t_{\max}$ and $\rho\geq 1$ such that 
\begin{equation}
\Delta t_{\max} = \rho \Delta t_{\min}.
\end{equation} 
and $\Delta t_{\min}\leq \Delta t_{n} \leq \Delta t_{\max}$ for all $n$.
\end{assumption}
\begin{remark}\label{rmk:dtmeasurable}
The constraints on the size of $\Delta t_{n}$ ensures the numerical method terminates in a finite number of steps and to prove convergence as $\Delta t_{\max}\rightarrow 0$. 
By construction, the random time-step $\Delta t_{n+1}$ being $\mathcal{G}_{n}$-measurable ensures the Wiener increments $\Delta\beta_{j+1}:=\beta_{j}(t_{n+1})-\beta_{j}(t_{n})$ are $\mathcal{G}_{n}$-conditionally normally distributed with mean zero and variance $\Delta t_{n+1}$. That is
\begin{equation}
\Econd{\Delta\beta_{j+1}} = 0, \quad 
\Econd{\norm{\Delta\beta_{j+1}}^{2}} = \Delta t_{n+1}. \nonumber
\end{equation}
Without this construction, we may not assume the increments $\Delta \beta_{j}(t_{n+1})$ are normally distributed as the chosen step-size $\Delta t_{n+1}$ would depend on the solution value at $t_{n}$. 
\end{remark}
The final ingredient for the adaptive time-stepping strategy is the admissibility of the time-stepping strategy, which we now define as in \citep{kellylord2017adaptive}.

\begin{defn}\label{defn:addmissable}
Let $\{ X_{h}^{n} \}$ be an approximate numerical solution of \Cref{eq_SPDE_def}. The time-stepping of the numerical solution $\{\Delta t_{n}\}_{1\leq n\leq N}$ is called an \textbf{admissible time-stepping strategy} if \Cref{assumption_timestepping} is satisfied and there exists non-negative constants $R_{1}$ and $R_{2}$ such that
\begin{equation}
\norm{F(X_{h}^{n})}^{2} \leq R_{1} + R_{2}\norm{X_{h}^{n}}^{2}, \quad n = 0,1,...,N-1,
\end{equation}
and $\Delta t_{\min} < \Delta t_{n} \leq \Delta t_{\max}$.
\end{defn}

\Cref{lemma_timestep_selection} below gives several rules that enable the correct selection of the time-step size in order to satisfy our admissibility bound. The introduction of a minimum time-step size implies we cannot always ensure control of the drift response via time-step selection - without a potentially infinite number of time-steps. To construct a viable scheme we require a ``backstop scheme'' to run over a time-step of size $\Delta t_{\min}$ whenever \Cref{lemma_timestep_selection} selects a time-step size less than or equal to $\Delta t_{\min}$. The backstop may be any method that converges strongly and for our numerical simulations in \Cref{sec:numerics}, we use the so called nonlinearities-stopped exponential Euler of \citep{Jentzen2015}, which we refer to as \stp{} and define fully in \cref{eq:scheme_stopped} and \cref{eq:stopped:ind}. We can relate the admissible time-stepping strategy to the nonlinearities-stopped method in the following manner. Ignoring any potential super-linear growth in the diffusion, \stp{} uses the indicator function to ``switch off'' the drift if
\begin{equation}
\norm{F(X_{h}^{n})} > \frac{1}{\Delta t^{\theta}}, \label{eq_Jentzen_drift_cond}
\end{equation}
for some $\theta \in (0, 1/4]$. From \cref{eq_Jentzen_drift_cond}, we may expect that our adaptive method should satisfy
\begin{equation}
\Delta t_{n+1} \leq \frac{1}{\norm{F(X_{h}^{n})}^{1/\theta} }.\label{eq_Jentzen_drift_cond2}
\end{equation}
In fact, we show that \cref{eq_Jentzen_drift_cond2} is admissible for any $\theta \in \mathbb{R} \setminus \{0\}$ by \Cref{lemma_timestep_selection} below (and not just for
$\theta \in (0, 1/4]$). 

\Cref{lemma_timestep_selection} enables different time-step selection strategies to be employed in an adaptive scheme. By reducing the time-step size when the drift is large we maintain a similar control of the drift response as in \Cref{eq_Jentzen_drift_cond} without resorting to ``switching off'' the drift contribution. We expect this to lead to more accurate simulations of the SPDE as the non-linear contribution to the solution is never discounted for $\Delta t_{n}$ greater than $\Delta t_{\min}$. In general, if the adaptive scheme cannot set $\Delta t_{n}$ greater than $\Delta t_{\min}$, we must check the stopping condition, \Cref{eq_Jentzen_drift_cond}, to ensure overall convergence. 

A moderate choice of $\Delta t_{\min}$ and $\Delta t_{\max}$ can substantially increase the solution accuracy, as demonstrated numerically in \Cref{sec:numerics} for $\rho=100$ on two test problems. We cannot know \emph{a priori} that the adaptive scheme always satisfies \Cref{eq_Jentzen_drift_cond} at $\Delta t_{\min}$, however the additional buffer given by $\rho$ before the need to test against the stopping criterion at $\Delta t_{\min}$ ensures the adaptive scheme never has to discount the non-linear contributions for the numerical test cases.

\begin{lemma}\label{lemma_timestep_selection}
Let $\{X_{h}^{n}\}_{n \in \mathbb{N}}$ be an approximate numerical solution of \Cref{eq_SPDE_def}, let $\delta \leq \Delta t_{\max}$, $\theta \in \mathbb{R}\setminus \{0\}$ fixed and assume $\{\Delta t_{n}\}_{1\leq n\leq N}$ satisfies \Cref{assumption_timestepping}. Let $F$ satisfy \Cref{assump_F_local_lipschitz,assump_DF_growth}. Then $\{\Delta t_{n}\}_{1\leq n\leq N}$ is admissible if any one of the following hold:
\begin{enumerate}[i)]
\item $\Delta t_{n+1} \leq \delta /\norm{F(X_{h}^{n})}^{1/\theta}$
\item $\Delta t_{n+1} \leq \delta / (c_{3}(1 + \norm{X_{h}^{n}}^{1+c_{2}} ))$
\item $\Delta t_{n+1} \leq \delta\norm{X_{h}^{n}} /\norm{F(X_{h}^{n})}$
\item $\Delta t_{n+1} \leq \delta\norm{X_{h}^{n}} / (c_{3}(1 + \norm{X_{h}^{n}}^{1+c_{2}} ))$
\item $\Delta t_{n+1} \leq \delta / \norm{X_{h}^{n}}$.
\end{enumerate}

\end{lemma}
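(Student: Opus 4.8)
The quantity to control is the drift bound in \Cref{defn:addmissable}: since \Cref{assumption_timestepping} is assumed outright, the step-size constraints $\Delta t_{\min}\le\Delta t_{n}\le\Delta t_{\max}$ come for free, so the entire content of the lemma is to produce non-negative constants $R_{1},R_{2}$, independent of $n$, with $\norm{F(X_{h}^{n})}^{2}\le R_{1}+R_{2}\norm{X_{h}^{n}}^{2}$. The plan is to treat the five rules one at a time: in each case I would rearrange the upper bound on $\Delta t_{n+1}$ so as to isolate either $\norm{F(X_{h}^{n})}$ or $\norm{X_{h}^{n}}$, and then replace the remaining factor $\Delta t_{n+1}$ by its guaranteed lower bound $\Delta t_{\min}$ supplied by \Cref{assumption_timestepping}. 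This trades the random step size for a fixed constant and collapses each rule into a bound of the required form.

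For the rules acting directly on the drift this is immediate. In (iii), $\Delta t_{n+1}\le\delta\norm{X_{h}^{n}}/\norm{F(X_{h}^{n})}$ rearranges to $\norm{F(X_{h}^{n})}\le(\delta/\Delta t_{n+1})\norm{X_{h}^{n}}\le(\delta/\Delta t_{\min})\norm{X_{h}^{n}}$; squaring gives $R_{1}=0$ and $R_{2}=(\delta/\Delta t_{\min})^{2}$. In (i) with $\theta>0$, the constraint gives $\norm{F(X_{h}^{n})}^{1/\theta}\le\delta/\Delta t_{n+1}$, and since $x\mapsto x^{\theta}$ is increasing we obtain $\norm{F(X_{h}^{n})}\le(\delta/\Delta t_{\min})^{\theta}$, a constant, so $R_{2}=0$ and $R_{1}=(\delta/\Delta t_{\min})^{2\theta}$.

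The remaining rules reduce to these via the polynomial growth bound \cref{eq_F_growth}, $\norm{F(X)}\le c_{3}(1+\norm{X}^{1+c_{2}})$, which follows from \Cref{assump_DF_growth}. Indeed (ii) and (iv) are just (i) and (iii) with $\norm{F(X_{h}^{n})}$ replaced by its surrogate $c_{3}(1+\norm{X_{h}^{n}}^{1+c_{2}})$, so the same rearrangement bounds $c_{3}(1+\norm{X_{h}^{n}}^{1+c_{2}})$, and hence $\norm{F(X_{h}^{n})}$, by a constant in case (ii) and by a multiple of $\norm{X_{h}^{n}}$ in case (iv). Rule (v) instead controls the state directly: $\norm{X_{h}^{n}}\le\delta/\Delta t_{n+1}\le\delta/\Delta t_{\min}$, so $\norm{X_{h}^{n}}$ is bounded by a constant and \cref{eq_F_growth} then makes $\norm{F(X_{h}^{n})}$ constant as well. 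In every case the resulting constants depend only on $\delta,\Delta t_{\min},c_{2},c_{3},\theta$ and not on $n$, which is exactly what admissibility demands.

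The step I expect to require the most care is the sign of $\theta$ in rule (i). For $\theta>0$ the monotonicity of $x\mapsto x^{\theta}$ sends the rearranged constraint to an \emph{upper} bound on $\norm{F(X_{h}^{n})}$, as above; for $\theta<0$ the same manipulation instead yields a \emph{lower} bound, so the argument there cannot proceed identically. To close the estimate for negative $\theta$ I would expect to have to bring in the upper cap $\Delta t_{n+1}\le\Delta t_{\max}$ together with the polynomial growth \cref{eq_F_growth}, and this is where I would concentrate the effort. Once that case is settled, assembling a single pair $(R_{1},R_{2})$ valid uniformly over $n=0,\dots,N-1$, with the step-size bounds required by \Cref{defn:addmissable} supplied by \Cref{assumption_timestepping}, is routine bookkeeping.
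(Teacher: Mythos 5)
Your handling of rule (i) with $\theta>0$ and of rules (ii)--(v) is correct and is essentially the paper's argument: the paper proves (i) by the same rearrangement, $\norm{F(X_{h}^{n})}^{2}\leq(\delta/\Delta t_{n+1})^{2\theta}\leq(\Delta t_{\max}/\Delta t_{\min})^{2\theta}=\rho^{2\theta}$, defers (ii)--(iv) to \citep{kellylord2017adaptive} (where the arguments are exactly your rearrangements), and disposes of (v) via \Cref{assump_DF_growth} just as you do. Your constants $(\delta/\Delta t_{\min})^{2\theta}$ etc.\ are bounded by the paper's $\rho^{2\theta}$ since $\delta\leq\Delta t_{\max}$, so the two are interchangeable.

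The one genuine gap is the case $\theta<0$, which you flag but do not close --- and you are right that it cannot be closed by the same manipulation, since raising $\norm{F(X_{h}^{n})}^{1/\theta}\leq\delta/\Delta t_{n+1}$ to the negative power $2\theta$ reverses the inequality and yields only the lower bound $\norm{F(X_{h}^{n})}^{2}\geq(\delta/\Delta t_{n+1})^{2\theta}$. Two things are worth knowing here. First, the paper's own one-line proof has exactly the same restriction: its displayed chain silently treats $x\mapsto x^{2\theta}$ as increasing, so it too only establishes (i) for $\theta>0$, despite the statement (and the surrounding discussion) advertising all $\theta\in\mathbb{R}\setminus\{0\}$. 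Second, your proposed repair --- invoking the cap $\Delta t_{n+1}\leq\Delta t_{\max}$ together with \cref{eq_F_growth} --- cannot succeed, because for $\theta<0$ rule (i) reads $\Delta t_{n+1}\leq\delta\norm{F(X_{h}^{n})}^{1/|\theta|}$, a constraint that \emph{loosens} as the drift grows and therefore places no upper bound on $\norm{F(X_{h}^{n})}$ at all: a strategy taking $\Delta t_{n+1}=\Delta t_{\min}$ at every step satisfies (i) and \Cref{assumption_timestepping} whenever $\norm{F(X_{h}^{n})}\geq(\Delta t_{\min}/\delta)^{|\theta|}$, while $\norm{F(X_{h}^{n})}$ may be arbitrarily large relative to $\norm{X_{h}^{n}}$ for superlinear $F$; \cref{eq_F_growth} bounds $\norm{F}$ only by the power $\norm{X}^{c_{2}+1}$, which does not produce the linear-quadratic form $R_{1}+R_{2}\norm{X_{h}^{n}}^{2}$ demanded by \Cref{defn:addmissable}. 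So no pair $(R_{1},R_{2})$ is extractable from (i) alone when $\theta<0$, and your instinct to concentrate effort there detected a real defect in the lemma as stated rather than a missing step in your own argument; the honest resolution is to restrict (i) to $\theta>0$.
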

\begin{proof}
For part (i), applying the definitions of $\Delta t_{\min}$ and $\Delta t_{\max}$,
\begin{equation}
\norm{F(X_{h}^{n})}^{2} 
\leq \left( \frac{\delta}{\Delta t_{n+1}} \right)^{2\theta} 
\leq \left( \frac{\Delta t_{\max}}{\Delta t_{\min}} \right)^{2\theta}
= \rho^{2\theta} \nonumber
\end{equation}
which is admissible. For parts (ii)-(iv) see \citep{kellylord2017adaptive}. Part (v) is seen by application of \Cref{assump_DF_growth} and a similar argument as for (i).
\end{proof}
The specific time-step selection rule and the value of $\rho$ that gives the best trade-off between accuracy and computational cost is unknown and may be problem dependent. One point to note for implementation is that admissibility only requires one of the conditions in \Cref{lemma_timestep_selection} to hold. Therefore we may compute several, or all, of the timestep restriction criteria and then set $\Delta t_{n+1}$ to the maximum timestep size to minimise the overall computational cost of the scheme. 

\section{Main Results}\label{sec:mainresults}

We now state our main result on strong convergence of the adaptive time-stepping method. For all results in this section we require that \Cref{assump_A_semigroup_generator,assump_F_local_lipschitz,assump_DF_growth,assump_B_lip,assump_B_growth}
hold. We also assume the time-stepping strategy $\{\Delta t_{n}\}_{1\leq n\leq N}$ satisfies \Cref{defn:addmissable} and $\Delta t_{n+1}>\Delta t_{\min}$ for all $0\leq n<N$. The constant $\Cmain$ depends on the final time $T$, the constant $\rho$ and the initial data. It is independent of the spatial discretisation parameter $h$, the maximum time-step size $\Delta t_{\max}$, and the number of Fourier modes, $J$, taken in the approximation of $\Delta W$. As in \Cref{sec:assumptions} the eigenvalues of $A$ are denoted by $\{ \lambda_{j} \}_{j=1}^{\infty}$.

Firstly for the semi-discrete approximation in space, we state the following strong convergence result.
\begin{theorem}\label{thm:Eh}
Let $X(T)$ be the mild solution defined in \Cref{eq:mild_sol} at time $T$. Let $X_{h}(T)$ be the semi-discrete approximation defined in \Cref{eq:SPDE_SD_sol}. Then, for $X_{0} \in L^{2}(\mathbb{D},\DAs{1/2})$ and $\epsilon>0$, the following estimate holds
\begin{equation}
\Lnorm{X(T)-X_{h}(T) } 
\leq 
\Cspatial
\left(
h^{1+r-\epsilon} + 
\lambda_{J+1}^{-\frac{1+r}{2}+\epsilon}
\right). \nonumber 
\end{equation}
\end{theorem}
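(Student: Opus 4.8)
The plan is to split the error into a purely approximation-theoretic contribution, which carries the convergence rates and uses no structure of $F$, and a nonlinear perturbation controlled by an energy estimate exploiting only the one-sided Lipschitz condition. I would introduce the auxiliary process
\begin{equation}
\tilde{X}_{h}(T) = S_{h}(T)P_{h}X_{0} + \int_{0}^{T} S_{h}(T-s)P_{h}F(X(s))\,ds + \int_{0}^{T} S_{h}(T-s)P_{h}B(X(s))P_{J}\,dW(s), \nonumber
\end{equation}
which uses the discrete semigroup and projections but evaluates the nonlinearities at the \emph{true} solution $X(s)$. Writing $X(T)-X_{h}(T) = \big(X(T)-\tilde{X}_{h}(T)\big) + \big(\tilde{X}_{h}(T)-X_{h}(T)\big)$, I would estimate the two pieces separately.

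For the first piece the nonlinearities appear with the same argument $X(s)$ in both terms, so no Lipschitz property of $F$ is needed. Setting $E_{h}(t):=S(t)-S_{h}(t)P_{h}$, I would bound the initial-data and drift contributions using the standard finite-element semigroup error estimate $\norm{E_{h}(t)v} \le C h^{s} t^{-(s-\nu)/2}\norm{v}_{\nu}$ (which follows from the approximation assumption on $R_{h}$ together with \Cref{lemma_Stv-v}, as in \citep{thomee2006}), feeding in the spatial regularity of $X$ from \Cref{thm:moment_bounds} and the growth bound \cref{eq_F_growth}. For the stochastic integral I would apply the \ito{} isometry and split $S(T-s)B(X)-S_{h}(T-s)P_{h}B(X)P_{J}$ into the semigroup-error part $E_{h}(T-s)B(X)$ and a noise-truncation part involving $I-P_{J}$; the former is controlled by the Hilbert--Schmidt version of the $E_{h}$ estimate combined with the regularity index $r$ from \Cref{assump_B_growth}, yielding the $h^{1+r-\epsilon}$ rate, while the latter is governed by the decay $\norm{\negA{-r/2}(I-P_{J})}_{\mathcal{L}(H)}\le\lambda_{J+1}^{-r/2}$ together with the smoothing of $S_{h}$, producing $\lambda_{J+1}^{-(1+r)/2+\epsilon}$. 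The $\epsilon$-loss arises precisely from avoiding the borderline time-integrability of the squared semigroup error. Running the same argument with the Burkholder--Davis--Gundy inequality gives the analogous bound on $\mathbb{E}\norm{X(s)-\tilde{X}_{h}(s)}^{2q}$ in every higher moment, uniformly in $s\in[0,T]$, which I need below.

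For the second piece, $\psi_{h}:=\tilde{X}_{h}-X_{h}$ lives in $V_{h}$ and solves $d\psi_{h} = \big[-A_{h}\psi_{h} + P_{h}(F(X)-F(X_{h}))\big]dt + P_{h}(B(X)-B(X_{h}))P_{J}\,dW$, so I would apply \itos{} formula to $\norm{\psi_{h}}^{2}$. The term $2\langle\psi_{h},-A_{h}\psi_{h}\rangle\le 0$ is discarded, and the diffusion term is handled by \Cref{assump_B_lip} together with the fact that right-composition with $P_{J}$ does not increase the Hilbert--Schmidt norm, giving $2\LB^{2}\big(\norm{X-\tilde{X}_{h}}^{2}+\norm{\psi_{h}}^{2}\big)$. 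The crucial drift term I treat by the double splitting $F(X)-F(X_{h}) = \big(F(X)-F(\tilde{X}_{h})\big) + \big(F(\tilde{X}_{h})-F(X_{h})\big)$: since $P_{h}\psi_{h}=\psi_{h}$, the second bracket pairs as $\langle\tilde{X}_{h}-X_{h},F(\tilde{X}_{h})-F(X_{h})\rangle\le \LF\norm{\psi_{h}}^{2}$ by \Cref{assump_F_local_lipschitz} with a \emph{deterministic} constant, while the first bracket is paired with $\psi_{h}$ via Young's inequality and the local Lipschitz bound $\norm{F(X)-F(\tilde{X}_{h})}\le C(1+\norm{X}^{c_{2}}+\norm{\tilde{X}_{h}}^{c_{2}})\norm{X-\tilde{X}_{h}}$ coming from \Cref{assump_DF_growth}. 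Taking expectations, dropping the mean-zero martingale term, and applying H\"older's inequality distributes the polynomial moment factor (bounded via \Cref{thm:moment_bounds} and the corresponding uniform-in-$h$ moment bounds for the discrete processes) onto a higher moment of the \emph{small} quantity $X-\tilde{X}_{h}$ estimated above, so this contribution enters at the squared-rate level. What remains is the Gronwall inequality $\mathbb{E}\norm{\psi_{h}(t)}^{2} \le C\int_{0}^{t} \mathbb{E}\norm{\psi_{h}(s)}^{2}\,ds + C\big(h^{1+r-\epsilon}+\lambda_{J+1}^{-(1+r)/2+\epsilon}\big)^{2}$ with deterministic constants, yielding the claim.

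The main obstacle is exactly the drift term in the energy estimate: the one-sided Lipschitz condition controls $\langle F(U)-F(V),U-V\rangle$ but not $\norm{F(U)-F(V)}$, so a naive Young's inequality would place a polynomially growing, random coefficient in front of $\norm{\psi_{h}}^{2}$ and destroy the Gronwall structure. The device that resolves this is inserting $F(\tilde{X}_{h})$, which routes the one-sided Lipschitz constant onto $\norm{\psi_{h}}^{2}$ (deterministic) and confines the polynomial-growth factor to a cross term involving only the already-controlled semigroup error $X-\tilde{X}_{h}$, whose higher moments decay at the required rate.
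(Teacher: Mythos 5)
Your proposal is sound, but it takes a genuinely different route from the paper's proof of \Cref{thm:Eh}. The paper works with a single decomposition through the error operator $T_{h}(t)=S(t)-S_{h}(t)P_{h}$ and, notably, never invokes the one-sided Lipschitz condition \Cref{assump_F_local_lipschitz} in the spatial estimate: the difference $F(X(s))-F(X_{h}(s))$ is written as a Taylor remainder $R_{F}(s,s,X_{h}(s))$, the iterated Cauchy--Schwarz device from the proof of \Cref{lemma_remainder_terms}~ii) converts the polynomially weighted error into the \emph{sublinear} quantity $\int_{0}^{T}\Lnorm{E_{h}(s)}^{2-\delta}ds$, and the proof closes with the nonlinear Gronwall inequality of \Cref{lemma:nonlinGronwall}. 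Your insertion of the auxiliary process $\tilde{X}_{h}$ instead routes the deterministic constant $\LF$ onto $\norm{\psi_{h}}^{2}$, exactly as you describe, and closes with an ordinary linear Gronwall argument; this is precisely the device needed to defuse the obstacle you identify, and the pairing $\langle \psi_{h},P_{h}(F(\tilde{X}_{h})-F(X_{h}))\rangle=\langle \tilde{X}_{h}-X_{h},F(\tilde{X}_{h})-F(X_{h})\rangle$ is legitimate since $P_{h}\psi_{h}=\psi_{h}$. What each approach buys: yours avoids the nonlinear Gronwall lemma and its iterated-exponential constants, and it does not require the uniform-in-$h$ moment bounds for $X_{h}$ that the paper imports from Proposition~7.3 of Jentzen--Pu\v{s}nik to control $\bar{E_{q}}$ (moments of $\tilde{X}_{h}$ follow from \Cref{thm:moment_bounds} plus your own first-piece estimate); the price is that your cross term genuinely needs the $2q$-th moment version of the error $X-\tilde{X}_{h}$ via Burkholder--Davis--Gundy, hence arbitrarily high moments of $X$ (available from \Cref{thm:moment_bounds}) and of $\norm{X_{0}}_{1}$ for the initial-data term, which strictly exceeds the stated hypothesis $X_{0}\in L^{2}(\mathbb{D},\DAs{1/2})$ --- though the paper's own proof leans on comparably high moments of $X$ and $X_{h}$ under the same hypothesis, so this is a shared looseness rather than a defect of your route. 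Both proofs draw the quantitative rates from identical inputs: \Cref{lemma_Th_approx} for $T_{h}$ (with the $\epsilon$-loss arising from the borderline time-integrability of the squared semigroup error, as you correctly locate it), the spectral decay $\Onorm{(I-P_{J})\negA{-(1+r-\epsilon)/2}}{H}\leq\lambda_{J+1}^{-(1+r-\epsilon)/2}$, and \Cref{assump_B_growth} for the index $r$. One small simplification available to you: the deterministic drift contribution $\int_{0}^{T}T_{h}(T-s)F(X(s))\,ds$ can be bounded directly with \Cref{lemma_Th_approx}~i) taking $\alpha=0$, $\beta=1+r$, since $(T-s)^{-(1+r)/2}$ is integrable for $r<1$; the paper instead Taylor-expands around $F(X(T))$ to exploit \Cref{lemma_Th_approx}~ii), but either works at the claimed rate.
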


Next, we state the strong convergence result for the full space-time discretisation.
\begin{theorem}\label{thm:Ek}
Let $X(T)$ be the mild solution defined in \Cref{eq:mild_sol} at time $T$. Let $X_{h}^{N}$ be the numerical approximation \adptone{} defined in \Cref{eq:scheme_adapt1} with $\{t_{n}\}_{n\in\mathbb{N}}$ an admissible time-stepping strategy. Then for $X_{0} \in L^{2}(\mathbb{D},\DAs{1/2})$ and $\epsilon>0$, the following estimate holds
\begin{equation}
\Lnorm{X(T)-X_{h}^{N} } 
\leq 
\Cmain
\left(
h^{1+r-\epsilon} + 
\lambda_{J+1}^{-\frac{1+r}{2}+\epsilon} +
\Delta t_{\max}^{ \frac{1}{2}-\epsilon  }
\right). \nonumber 
\end{equation}
\end{theorem}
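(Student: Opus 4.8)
The plan is to split the error by the triangle inequality,
\[
\Lnorm{X(T)-X_{h}^{N}} \leq \Lnorm{X(T)-X_{h}(T)} + \Lnorm{X_{h}(T)-X_{h}^{N}},
\]
so that the spatial contribution $h^{1+r-\epsilon}+\lambda_{J+1}^{-(1+r)/2+\epsilon}$ comes directly from \Cref{thm:Eh}, and all that remains is the purely temporal estimate $\Lnorm{X_{h}(T)-X_{h}^{N}}\leq \Cmain\,\Delta t_{\max}^{1/2-\epsilon}$. Before treating the temporal error I would establish uniform moment bounds $\sup_{0\leq n\leq N}\mathbb{E}\norm{X_{h}^{n}}^{p}<\infty$ for \adptone{}. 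This is the step where admissibility (\Cref{defn:addmissable}) is indispensable: the bound $\norm{F(X_{h}^{n})}^{2}\leq R_{1}+R_{2}\norm{X_{h}^{n}}^{2}$ replaces the super-linear drift by one of at most linear growth, so that a discrete energy argument on \Cref{eq:scheme_adapt1} using $\Onorm{S_{h}(t)}{H}\leq 1$, the Lipschitz bound of \Cref{assump_B_lip}, and the conditional \ito{} isometry for the random step sizes can be carried out. Together with the moment bounds for the mild solution from \Cref{thm:moment_bounds}, these are what allow the polynomial factors generated by \Cref{assump_DF_growth} to be absorbed later via H\"older's inequality.

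For the temporal error I would form the one-step recursion for $E_{n}:=X_{h}(t_{n})-X_{h}^{n}$ by subtracting the one-step form of \adptone{} from the mild identity for $X_{h}$ on $[t_{n},t_{n+1}]$, giving
\[
E_{n+1}=S_{h}(\Delta t_{n+1})E_{n}+D_{n+1}+M_{n+1},
\]
with drift difference $D_{n+1}=\int_{t_{n}}^{t_{n+1}}S_{h}(t_{n+1}-s)P_{h}[F(X_{h}(s))-F(X_{h}^{n})]\,ds$ and diffusion difference $M_{n+1}=\int_{t_{n}}^{t_{n+1}}[S_{h}(t_{n+1}-s)P_{h}B(X_{h}(s))-S_{h}(\Delta t_{n+1})P_{h}B(X_{h}^{n})]P_{J}\,dW(s)$. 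I would further split $D_{n+1}$ into a $\mathcal{G}_{n}$-measurable stability part $D^{(1)}_{n+1}$, in which $X_{h}(s)$ is frozen to $X_{h}(t_{n})$, and a consistency remainder $D^{(2)}_{n+1}$ measuring the time increment of $F(X_{h}(\cdot))$; likewise $M_{n+1}$ splits into a global-Lipschitz stability part involving $B(X_{h}(t_{n}))-B(X_{h}^{n})$ and consistency parts coming from $S_{h}(t_{n+1}-s)-S_{h}(\Delta t_{n+1})$ and $B(X_{h}(s))-B(X_{h}(t_{n}))$. The consistency remainders are bounded using the temporal H\"older regularity of $X_{h}$, where the conditional regularity estimates of \Cref{sec:cond_ito} are needed precisely because $t_{n}$ and $t_{n+1}$ are random, together with \Cref{lemma_Stv-v} and \Cref{eq_I-St_v_bnd}; this is the source of the $\Delta t_{\max}^{1/2-\epsilon}$ rate, the exponent $1/2$ coming from the stochastic integral and the loss $\epsilon$ from the borderline spatial regularity $r<1$.

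I would then square the recursion and pass to the conditional expectation $\Econd{\cdot}$. Because $E_{n}$ and $D^{(1)}_{n+1}$ are $\mathcal{G}_{n}$-measurable and $\Econd{M_{n+1}}=0$ by \Cref{rmk:dtmeasurable}, the cross terms against $M_{n+1}$ either vanish or, in the case of $D^{(2)}_{n+1}$, are controlled by Cauchy--Schwarz and Young's inequality; the conditional \ito{} isometry turns $\Econd{\norm{M_{n+1}}^{2}}$ into an integrated $L_{0}^{2}$-norm whose stability part contributes $\LB^{2}\Delta t_{n+1}\norm{E_{n}}^{2}$. Using $\norm{S_{h}(\Delta t_{n+1})E_{n}}\leq\norm{E_{n}}$ and assembling the pieces should yield a recursion of the form $\Econd{\norm{E_{n+1}}^{2}}\leq(1+C\Delta t_{n+1})\norm{E_{n}}^{2}+C\Delta t_{n+1}\,\Delta t_{\max}^{1-2\epsilon}$; taking full expectations, using $\Delta t_{n+1}\leq\Delta t_{\max}$ and $\sum_{n}\Delta t_{n+1}=T$, and applying a discrete Gronwall inequality then gives $\mathbb{E}\norm{E_{N}}^{2}\leq C\Delta t_{\max}^{1-2\epsilon}$ and hence the claim, the dependence of $\Cmain$ on $\rho$ entering through the admissibility-based moment bounds.

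The step I expect to be the main obstacle is the drift stability term $2\langle S_{h}(\Delta t_{n+1})E_{n},D^{(1)}_{n+1}\rangle$, where the one-sided Lipschitz condition (\Cref{assump_F_local_lipschitz}) must be brought to bear despite the intervening semigroup factors. Since the exact and numerical semi-discrete solutions both lie in $V_{h}$, one can use self-adjointness of $S_{h}$ and $\langle E_{n},P_{h}v\rangle=\langle E_{n},v\rangle$ to reduce, to leading order, to $2\Delta t_{n+1}\langle E_{n},F(X_{h}(t_{n}))-F(X_{h}^{n})\rangle\leq 2\LF\Delta t_{n+1}\norm{E_{n}}^{2}$, which is exactly the clean term the Gronwall argument needs. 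The delicate work is controlling the remainder $\langle (I-S_{h}(\tau))E_{n},P_{h}[F(X_{h}(t_{n}))-F(X_{h}^{n})]\rangle$ produced by peeling off the semigroups: here the super-linear growth of $F(X_{h}(t_{n}))-F(X_{h}^{n})$ allowed by \Cref{assump_DF_growth} must be tamed using the derivative bound, the moment estimates established at the outset, and Young's inequality, so that the remainder is genuinely higher order and does not destabilise the recursion. Making this interaction between the analytic smoothing of $S_{h}$ and the merely one-sided Lipschitz, polynomially-growing drift quantitative, while keeping every term conditionally measurable so the conditional \ito{} and regularity tools apply, is the crux of the proof.
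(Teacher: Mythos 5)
Your skeleton matches the paper's proof closely: the split via \cref{eq:full_bound} and \Cref{thm:Eh}, the one-step recursion for $E_{k}(t_{n}) = X_{h}(t_{n})-X_{h}^{n}$ with exactly the paper's stability/consistency splitting (your $D^{(1)}$, $D^{(2)}$, and the three pieces of $M_{n+1}$ correspond to the terms $I$, $\tilde{R}_{F}$, $II_{1}$, $II_{2}$, $\tilde{R}_{B}$ in \cref{eq_Ek_def}), the conditional \ito{} isometry and conditional regularity of \Cref{sec:cond_ito} to handle the random mesh points, the vanishing of the martingale cross terms by $\mathcal{G}_{n}$-measurability, and a final discrete Gronwall step. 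One structural difference: the uniform moment bounds $\sup_{n}\mathbb{E}\norm{X_{h}^{n}}^{p}<\infty$ that you propose to establish first are neither proved nor needed in the paper. Instead, admissibility is used \emph{directly inside the error recursion}: in the estimate of $\norm{I_{S}}^{2}$ one writes $\norm{F(X_{h}^{n})}^{2}\leq R_{1}+R_{2}\norm{X_{h}^{n}}^{2}$ and then $\norm{X_{h}^{n}}^{2}\leq 2\norm{X_{h}(t_{n})}^{2}+2\norm{E_{k}(t_{n})}^{2}$, so only $h$-uniform moments of the \emph{semi-discrete} solution (quoted from Proposition 7.3 of \citep{Jentzen2015}) are ever required. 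This sidesteps the nontrivial task of bounding moments of the adaptive scheme itself, which with random $\Delta t_{n}$ and random $N$ is not a routine discrete energy argument.

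The genuine gap is at the step you yourself flag as the crux, and your proposed fix does not work. After self-adjointness your remainder is $\int_{t_{n}}^{t_{n+1}}\langle E_{k}(t_{n}),(I-S_{h}(\sigma(s)))P_{h}\Delta F\rangle\,ds$ with $\Delta F:=F(X_{h}(t_{n}))-F(X_{h}^{n})$, and there is no smoothing to exploit: $(I-S_{h}(\sigma))$ gains a power of $\sigma$ only against $\DAs{\alpha/2}$-regularity of $\Delta F$, which no assumption on $F$ provides. The two available fallbacks both fail. Using \Cref{assump_DF_growth} gives $\norm{\Delta F}\leq c_{1}(1+\norm{X_{h}(t_{n})}^{c_{2}}+\norm{X_{h}^{n}}^{c_{2}})\norm{E_{k}(t_{n})}$, so the remainder is $\Delta t_{n+1}$ times an \emph{unbounded random coefficient} multiplying $\norm{E_{k}(t_{n})}^{2}$, and taking expectations before Gronwall is then impossible without exponential-moment or stopping-time machinery --- precisely the obstruction that makes the non-Lipschitz case hard in the first place. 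Using admissibility and Young instead, as you suggest, leaves an additive contribution of order $\Delta t_{n+1}$ per step (only one power of $\Delta t_{n+1}$ comes from the time integral), which sums to $O(1)$ over $[0,T]$ and destroys the rate; note that in the paper admissibility is absorbed only into the \emph{squared} term $\norm{I_{S}}^{2}$, which carries $\Delta t_{n+1}^{2}$ and is therefore harmless. The paper's resolution is structural and is the one idea missing from your plan: before squaring, apply $S_{h}(T-t_{n+1})$ to the recursion and work with the propagated errors $E_{S}^{n}:=S_{h}(T-t_{n})P_{h}E_{k}(t_{n})$, telescoping $\norm{E_{S}^{n+1}}^{2}-\norm{E_{S}^{n}}^{2}$ as in \cref{eq:St_Ek}--\cref{eq_Ek_inner_prod_master}. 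With the semigroup factors aligned in this way the one-sided Lipschitz condition \Cref{assump_F_local_lipschitz} is applied with \emph{no} remainder term at all, yielding $\langle I_{S},E_{S}^{n}\rangle\leq\LF\Delta t_{n+1}\norm{E_{k}(t_{n})}^{2}$ directly (\cref{eq_EK_prod_IS}). Without this propagation-to-final-time device, or a genuinely different substitute for it, your recursion does not close at the claimed order.
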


The final result shows strong convergence, of the same order, when a lower order of exponential approximation is used to solve the non-linear integral in the space-time discretisation.
\begin{corollary}\label{corr:phi0}
Let $X(T)$ be the mild solution defined in \Cref{eq:mild_sol} at time $T$. Let $X_{h}^{N}$ be the numerical approximation \adptzero{} defined in \Cref{eq:scheme_adapt0} with $\{t_{n}\}_{n\in\mathbb{N}}$ an admissible time-stepping strategy. Then for $X_{0} \in L^{2}(\mathbb{D},\DAs{1/2})$ and $\epsilon>0$, the following estimate holds
\begin{equation}
\Lnorm{X(T)-X_{h}^{N} } 
\leq 
\Cmain
\left(
h^{1+r-\epsilon} + 
\lambda_{J+1}^{-\frac{1+r}{2}+\epsilon} +
\Delta t_{\max}^{ \frac{1}{2}-\epsilon  }
\right). \nonumber 
\end{equation}
\end{corollary}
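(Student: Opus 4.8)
The plan is to follow the proof of \Cref{thm:Ek} almost verbatim, exploiting that \adptone{} and \adptzero{} differ in a single, controllable way. Comparing the integral representations \Cref{eq_SPDE_num_sol} and \Cref{eq_SPDE_num_sol0}, the two schemes are identical except in the drift: \adptone{} retains the exact semigroup $S_{h}(T-s)$ inside the drift integral (the $\varphi_{1}$ term), whereas \adptzero{} freezes it at the left endpoint as $S_{h}(T-t_{k})$ (the $\varphi_{0}$ term), while the diffusion and initial-data terms coincide. Hence the error decomposition of $\Lnorm{X(T)-X_{h}^{N}}$ into the spatial term of \Cref{thm:Eh}, the noise-truncation term, the diffusion temporal term, and the drift-argument term arising from $F(X_{h}(s))-F(X_{h}^{k})$ is unchanged, and the only genuinely new contribution is the drift semigroup-freezing term
\begin{equation}
J := \sum_{k=0}^{N-1} \int_{t_{k}}^{t_{k+1}} \left[ S_{h}(T-s) - S_{h}(T-t_{k}) \right] P_{h}F(X_{h}^{k})\, ds. \nonumber
\end{equation}

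First I would argue that $J$ enters the estimate additively: since every other term reproduces exactly the quantities analysed for \adptone{}, the discrete Gronwall argument used in the proof of \Cref{thm:Ek} applies unchanged with $J$ as an additional inhomogeneous forcing term, bounding $\Lnorm{X(T)-X_{h}^{N}}$ by the \adptone{} estimate plus a constant multiple of $\Lnorm{J}$. To bound $\Lnorm{J}$ I would use the smoothing estimates. For $s\in[t_{k},t_{k+1}]$ the semigroup law gives $S_{h}(T-s)-S_{h}(T-t_{k}) = S_{h}(T-s)\big(I-S_{h}(s-t_{k})\big)$, and, since $S_{h}$ obeys the bounds of \Cref{lemma_Stv-v}, for any $\gamma\in(0,1)$
\begin{align*}
\norm{S_{h}(T-s)\big(I-S_{h}(s-t_{k})\big)P_{h}F(X_{h}^{k})}
&\leq \Onorm{(-A_{h})^{\gamma}S_{h}(T-s)}{H}\,\Onorm{(-A_{h})^{-\gamma}\big(I-S_{h}(s-t_{k})\big)}{H}\,\norm{F(X_{h}^{k})} \\
&\leq \Cs^{2}\,(T-s)^{-\gamma}(s-t_{k})^{\gamma}\,\norm{F(X_{h}^{k})}.
\end{align*}
Taking the $L^{2}(\Omega;H)$ norm and applying Minkowski's inequality, the admissibility bound $\norm{F(X_{h}^{k})}^{2}\leq R_{1}+R_{2}\norm{X_{h}^{k}}^{2}$ of \Cref{defn:addmissable} together with the uniform scheme moment bounds, $(s-t_{k})^{\gamma}\leq \Delta t_{\max}^{\gamma}$, and the integrable singularity $\int_{0}^{T}(T-s)^{-\gamma}\,ds = T^{1-\gamma}/(1-\gamma)$ (finite as $\gamma<1$), yields $\Lnorm{J}\leq \Cmain \Delta t_{\max}^{\gamma}$. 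Taking $\gamma=\tfrac{1}{2}$ gives $\Lnorm{J}\leq\Cmain\Delta t_{\max}^{1/2}\leq\Cmain\Delta t_{\max}^{1/2-\epsilon}$, which is dominated by the rate of \Cref{thm:Ek}, completing the proof.

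The main obstacle I anticipate is not the estimate on $J$ itself — the singularity is integrable and the fractional powers of $A_{h}$ are bounded on the finite-dimensional $V_{h}$ — but rather justifying cleanly that $J$ propagates additively through the stability and Gronwall mechanism of \Cref{thm:Ek}, and verifying that the uniform moment bounds on the scheme used above genuinely hold for \adptzero{}. Since \adptzero{} shares the diffusion approximation of \adptone{} and its drift is likewise frozen at $X_{h}^{k}$ and controlled through admissibility, these moment bounds should follow from the same argument as in the \adptone{} analysis; confirming this transfer is the one point requiring care.
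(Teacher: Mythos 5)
Your core estimate is exactly the paper's: the only new term under the $\varphi_{0}$ approximation is the semigroup-freezing contribution in the drift, and you bound it through the factorisation $S_{h}(T-s)-S_{h}(T-t_{k})=S_{h}(T-s)\bigl(I-S_{h}(s-t_{k})\bigr)$ together with the smoothing bounds of \Cref{lemma_Stv-v}, obtaining order $\Delta t_{\max}^{\gamma}$ with $\gamma=\tfrac{1}{2}$; the paper does the same with exponent $(1-\epsilon)/2$ applied to squared norms. The genuine gap is in how you splice this term into the result. You assert the \adptzero{} error is bounded by ``the \adptone{} estimate plus a constant multiple of $\Lnorm{J}$''. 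Taken literally this fails: the iterates $X_{h}^{k}$ generated by \adptzero{} are different random variables from those generated by \adptone{}, so the \adptzero{} error does not decompose as the \adptone{} error plus $J$; every term of the one-step recursion must be re-derived along the \adptzero{} trajectory. The correct treatment --- and the paper's --- keeps the extra term, there called $III_{S}$, inside the one-step conditional recursion of the proof of \Cref{thm:Ek}: one verifies that $III_{S}$ is $\mathcal{G}_{n}$-measurable, so that its inner products against the stochastic terms $II_{S}$ and $\tilde{R}_{SB}$ vanish under conditional expectation (preserving the martingale structure that makes the summation argument work), bounds $\Econd{\norm{III_{S}}^{2}}$ by the smoothing estimate, and bounds the cross term $\Econd{\langle III_{S}, E_{S}^{n}\rangle}$ by Young's inequality, which produces an additional $\Delta t_{n+1}\norm{E_{k}(t_{n})}^{2}$ that is absorbed into the discrete Gronwall constant rather than appearing additively. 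Your phrase ``inhomogeneous forcing term'' gestures at this, but the cross-term bookkeeping is precisely where the work lies and your proposal does not carry it out.

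A second, smaller defect: you invoke ``uniform scheme moment bounds'' for the \adptzero{} iterates to control $\Lnorm{F(X_{h}^{k})}$, and you yourself flag this as unverified. The paper never proves moment bounds for the numerical iterates of either scheme; it avoids needing them. First, it attaches the semigroup difference to $F(X_{h}(t_{n}))$ (the semidiscrete solution, whose moments are finite by \Cref{thm:moment_bounds}, uniformly in $h$) rather than to the numerical iterate; second, where the iterate does appear, admissibility gives the pathwise bound $\norm{F(X_{h}^{k})}^{2}\leq R_{1}+R_{2}\norm{X_{h}^{k}}^{2}$ and the split $\norm{X_{h}^{k}}^{2}\leq 2\norm{X_{h}(t_{k})}^{2}+2\norm{E_{k}(t_{k})}^{2}$ routes the unknown part back through the Gronwall recursion. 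Repairing your argument along these lines makes it coincide with the paper's proof of the corollary; as written, both the additivity claim and the moment bound are unsupported.
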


The strategy of the proofs are the following. Firstly, due to adaptive time-stepping, we  must derive a conditional \ito{} isometry and hence conditional regularity in time for the mild solution in \Cref{sec:cond_ito}. Then in \Cref{sec:proofmain} we first prove the spatial convergence, and secondly the full spatio-temporal convergence.

\section{Numerical Experiments}\label{sec:numerics}

We continue with several numerical tests to demonstrate the effectiveness of the adaptive time-stepping scheme in comparison to alternative numerical schemes that ensure strong convergence in the presence of non-globally Lipschitz drift. The numerical experiments all utilise the spectral Galerkin approach. We firstly restate our adaptive numerical scheme and introduce several additional schemes to compare against. The first adaptive numerical scheme, \adptone{}, is defined by 
\begin{align}\label{eq:APT1}
X_{h}^{n+1} &= e^{-\Delta t_{n+1}A_{h}}\left( X_{h}^{n}  + P_{h}B(X_{h}^{n})P_{J} \Delta W_{n+1} \right) \nonumber \\
&\breakgap + 
\Delta t_{n+1} \varphi_{1}(\Delta t_{n+1}A_{h}) P_{h}F(X_{h}^{n}), 
\end{align}
whenever $\Delta t_{n+1} > \Delta t_{\min}$. The operator $P_{J}$ is defined in \cref{eq:PJ} and for all schemes we set $J=N_{x}$, the number of nodes in the spatial domain. 

A less computationally intensive adaptive scheme, \adptzero, can be defined by using the $\varphi_{0}$ approximation to the semigroup in the drift,
\begin{equation}\label{eq:APT0}
X_{h}^{n+1} = e^{-\Delta t_{n+1}A_{h}}\left( X_{h}^{n} + P_{h}F(X_{h}^{n})\Delta t_{n+1} + P_{h}B(X_{h}^{n})P_{J} \Delta W_{n+1} \right),
\end{equation}
whenever $\Delta t_{n+1} > \Delta t_{\min}$. Both adaptive schemes use \cref{lemma_timestep_selection} iii) to (maximally) choose $\Delta t_{n+1}$ at each time-step and \stp{} if $\Delta t_{n+1} = \Delta t_{\min}$. \stp{} is defined by
\begin{equation}\label{eq:scheme_stopped}
X_{h}^{n+1} = e^{-\Delta tA_{h}}X_{h}^{n} + 
e^{-\Delta tA_{h}}\left[ P_{h}F(X_{h}^{n})\Delta t + P_{h}B(X_{h}^{n})P_{J} \Delta W_{n+1} \right]_{\mathbbm{1}\{F,\Delta t\} },
\end{equation}
with
\begin{equation}\label{eq:stopped:ind}
\mathbbm{1}\{F,\Delta t\}:= 
\mathbbm{1}\Big\{ \norm{P_{h}F(X_{h}^{n})} \leq (1/\Delta t)^{\theta} \Big\}.
\end{equation}
We take $\theta$ as large as possible (according to \citep{Jentzen2015}), which corresponds to $\theta=\frac{1}{4}$. In this method, the nonlinear contributions to the scheme are switched off if they are too large in comparison to the timestep size. This controls any unwanted growth due to the non-Lipschitz drift and ensures strong convergence of the scheme. The Lipschitz assumptions in \citep{Jentzen2015} are more general than in this work, as they allow (potentially) non-globally Lipschitz drift $B$ to be controlled by the one-sided Lipschitz drift $F$ in a combined estimate such as
\begin{equation}
\langle F(X)-F(Y),X-Y \rangle +
\norm{ B(X)-B(Y) }_{L_{0}^{2}}
\leq L \norm{X-Y}^{2} \nonumber .
\end{equation}
As this work only considers Lipschitz drift $B$, we have relaxed the indicator bound to  \Cref{eq:stopped:ind} instead of the stronger bound of
\begin{equation}
\mathbbm{1}\{F,\Delta t\}:= 
\mathbbm{1}\Big\{ \norm{P_{h}F(X_{h}^{n})} + \HSnorm{P_{h}B(X_{h}^{n})}{H}{H}\leq (1/\Delta t)^{\theta} \Big\},
\end{equation}
as originally defined in \citep{Jentzen2015}, to improve the performance of the stopped scheme. 

The second fixed step numerical scheme we compare against is the tamed Euler-Maryuama introduced in \citep{Gyongy2016}. This work is of the variational approach and therefore  satisfies different assumptions than in this paper. In \citep{Gyongy2016} the linear and non-linear operators are considered as a single operator which consists of two components. These components satisfy different growth and coercivity conditions than in this work. Existence and uniqueness of solutions as well convergence of a tamed Euler-Maryuama scheme is proven in \citep{Gyongy2016}. We test a similar tamed Euler-Maryuama method but again remark the framework and assumptions of \citep{Gyongy2016} are quite different to what we consider here. To that end, define $C(X):= -AX + F(X)$, where $A$ and $F$ are the linear and nonlinear parts of the SPDE. Then the tamed operator $\tilde{C}$ is defined by
\begin{equation}\label{eq:taming}
\tilde{C}:= \frac{C(X)}{1+\sqrt{\Delta t}\norm{C(X)}},
\end{equation}
and the tamed scheme as, 
\begin{equation}\label{eq:scheme_tamed}
X_{h}^{n+1} = X_{h}^{n} + \Delta t P_{h}\tilde{C}(X_{h}^{n}) + P_{h}B(X_{h}^{n})P_{J} \Delta W_{n+1},
\end{equation}
called \tamed{}. The final method we compare against, named \tamedexp{}, is a speculative method, inspired by the tamed schemes of e.g. \citep{Gyongy2016} and \citep{hutzenthaler_tamed2012}. We define the tamed exponential scheme as
\begin{equation}\label{eq:scheme_exp_tamed}
X_{h}^{n+1} = e^{-\Delta t A_{h}}\left( X_{h}^{n} + P_{h}\tilde{F}(X_{h}^{n})\Delta t + P_{h}B(X_{h}^{n})P_{J} \Delta W_{n+1} \right),
\end{equation}
with $\tilde{F}$ defined analogously to \Cref{eq:taming} for $F$ in place of $C$. 

To ensure that the linear operator is the generator of an analytic semigroup with eigenvalues $\lambda_{j}>0$, we add and subtract $c_{0}X dt$ on the right hand side of \Cref{eq_SPDE_def} for some $c_{0}\in \mathbb{R}$, taking $-c_{0}X$ into the (re)definition of $-A$ and $c_{0}X$ into $F$ for the exponential methods. In all simulations we set $c_{0}= \max\{-\lambda_{j},0\}+1$. In the case of $-A$ equal to the Laplacian, this means we set $c_{0}=1$ as all $\lambda_{j}\geq 0$.

In all simulations, we use the time-stepping rule $\Delta t_{n} \leq \frac{\delta}{\norm{F(X_{h}^{n})}}$, set $\rho=100$ and $N_{x}=512$. For the convergence and efficiency plots we solve over a range of $\Delta t_{\max} \in [2^{-3}, 2^{-12}]$ with $1000$ trials. The reference solutions are computed using \stp{} at $\Delta t_{\text{ref}}=5\cdot 10^{-6}$.

\subsection{Allen-Cahn Equation}
We test the adaptive scheme firstly on the Allen-Cahn SPDE with multiplicative noise, that is we solve the following SPDE
\begin{equation}
dX = \left[ \Delta X + X-X^{3} \right] dt + \sigma X dW, \nonumber
\end{equation}
on the 1D domain $[0,32\pi]$. We set $\sigma=1$ and compare the mean-square error over 1000 trials of the final solution at $T=1$, against a reference solution. In \Cref{fig:AC_dt_converge} we plot the mean-square error against $\Delta t_{\max}$ for a range of noise regularities $r\in\{0,\frac{1}{2},1\}$, and observe convergence at rates equal to the theoretical rate of $\frac{1}{2}$. Note that the convergence results only hold for $r\in [0,1)$ and not for $r=1$ due to blow up for a constant in \cref{eq:spatial:II2} as $r\rightarrow 1$, however this is not observed in the either of the numerical tests.
\begin{figure}
\centering
\begin{subfigure}{.5\textwidth}
  \centering
  \includegraphics[width=.9\linewidth]{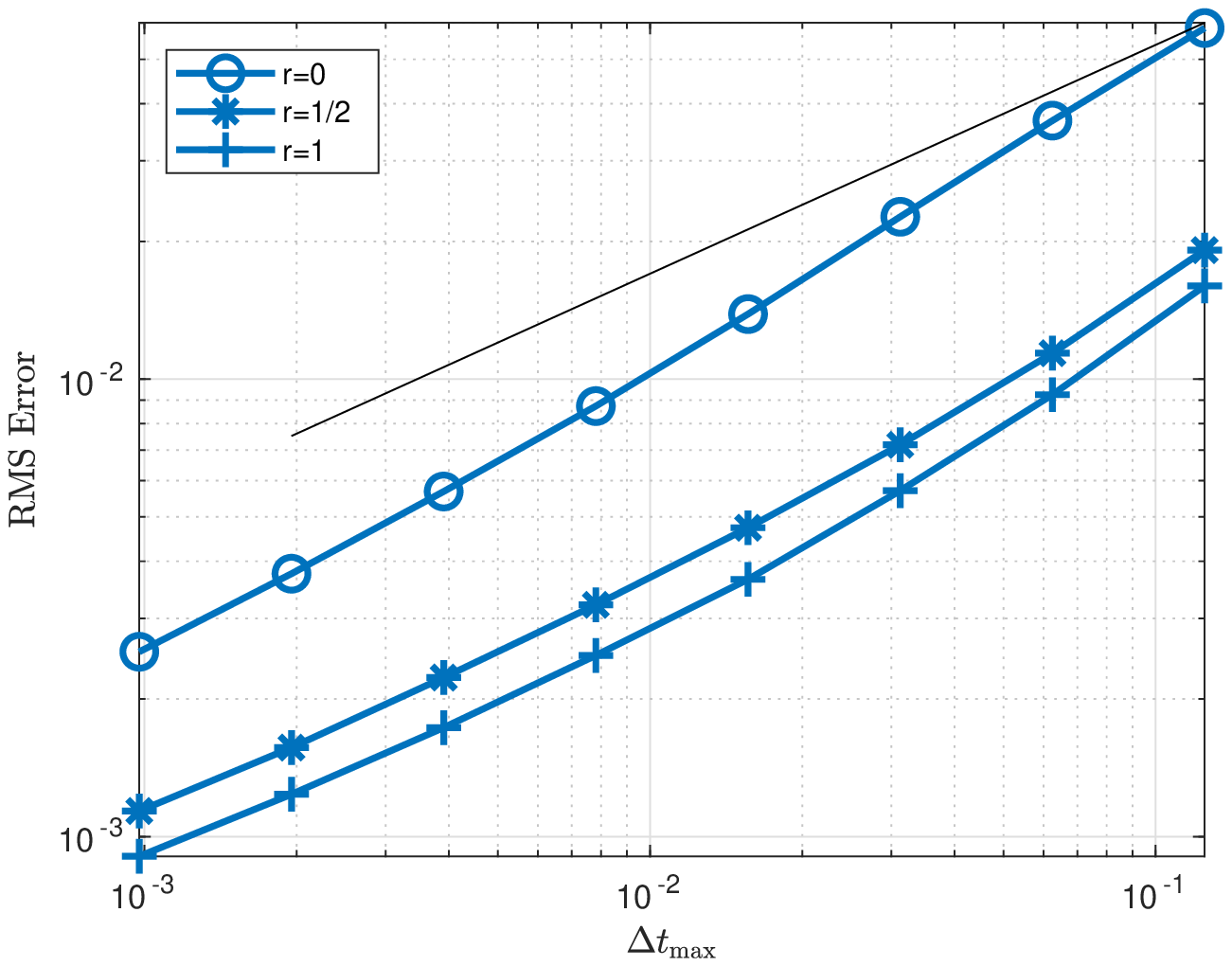}
  \caption{ }
  \label{fig:AC_dt_converge}
\end{subfigure}%
\begin{subfigure}{.5\textwidth}
  \centering
  \includegraphics[width=.9\linewidth]{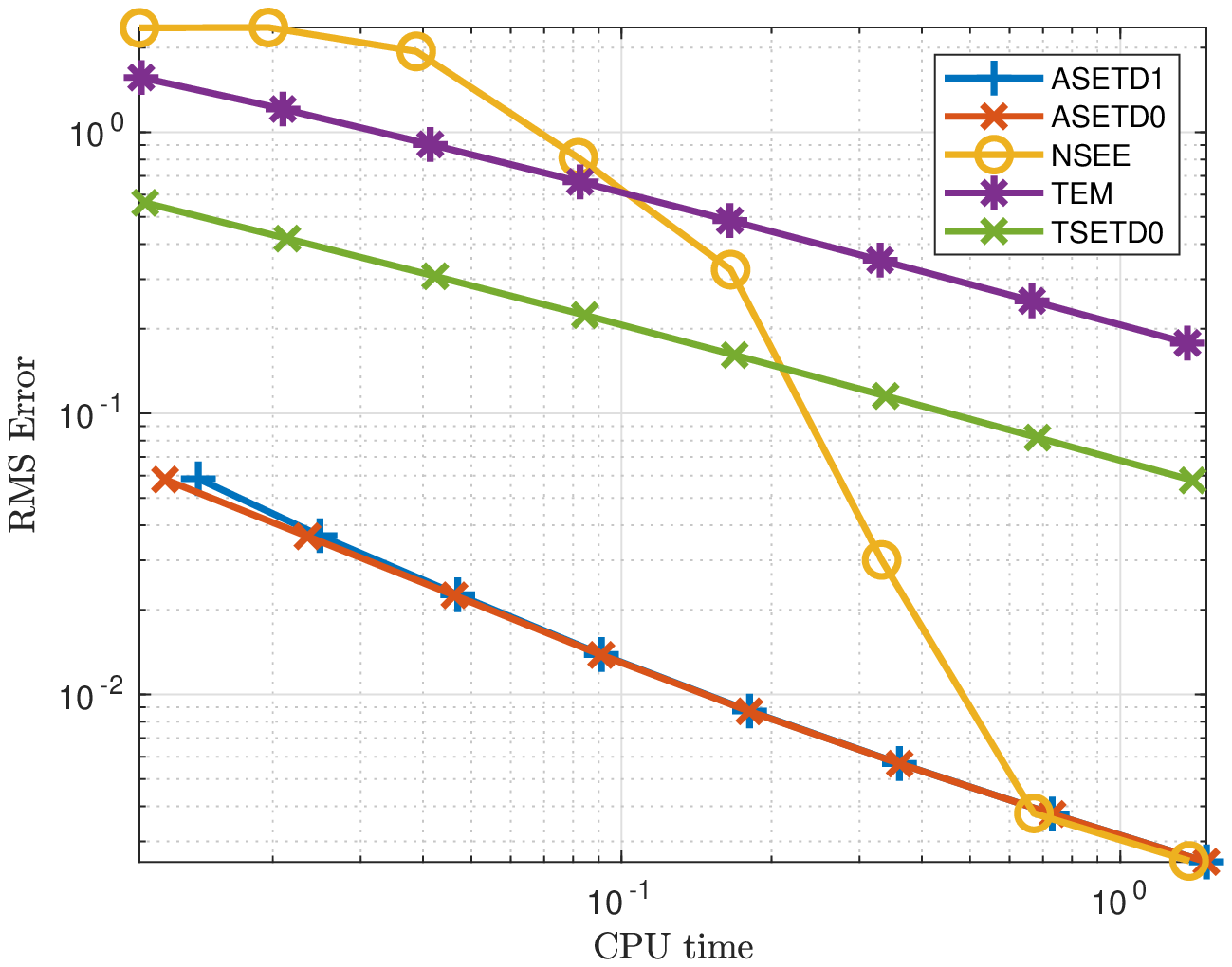}
  \caption{ }
  \label{fig:AC_err_vs_cpu_r0}
\end{subfigure}
\caption{Plot (a) convergence of \adptone{} with respect to $\Delta t_{\max}$. We show convergence for a range of noise regularity parameters, $r\in\{0,\frac{1}{2},1\}$ with reference slope of order $\frac{1}{2}$. Plot (b) shows the RMS error against average CPU time for all schemes with noise regularity $r=0$ and $\rho=100$.}
\end{figure}

In \Cref{fig:AC_err_vs_cpu_r0} we compare the efficiency of two adaptive schemes, \adptone{} and \adptzero{}, against the three fixed time-stepping methods \stp{}, \tamed{}  and \tamedexp{}. We observe significantly lower errors and a higher convergence rate for \adptone{} and \adptzero{} when compared to both \tamed{} and \tamedexp{}. \stp{} cannot produce accurate solutions for larger values of $\Delta t$, due to the switching off of the non-linear contributions. After sufficient reduction in $\Delta t$, the errors in \stp{} and \adptzero{} coincide, with a slight overhead for the adaptive scheme. For the parameter $r=0$ we do not see significant difference between \adptone{} and \adptzero{}, however for smoother noise, such as shown in \Cref{fig:SH_err_vs_cpu_r0}, the increase of accuracy of \adptone{} becomes apparent. A final point to note is the difference in accuracy between \tamed{} and \tamedexp{} for $r=0$, for smoother noise (larger value of $r$) the accuracy of these two schemes coincide.

In \Cref{fig:AC_change_in_dt} we plot, for a single representative realisation of the Allen-Cahn equation, the change in $\Delta t_{n}$ over the time period $[0,5]$ in the upper plot. Below this we show the change in $\norm{F(X)}$ over time (we observe close agreement between \adptone{} and the reference solution, which is not shown for clarity). The buffer between $\Delta t_{\max}$ and $\Delta t_{\min}$ provided by $\rho$ enables \adptone{} to approximate the solution without use of the backstop at any point for this realisation. Conversely, for this particular realisation, \stp{} has to discard the non-linear contribution for significant portions of the overall time period as seen by the orange dashed line and grey shaded areas. In this realisation the parameters used were $r=0$, $\Delta t_{\max}=10^{-2}$ and $\rho=100$. The resulting mean timestep was $\overline{\Delta t}=3\cdot 10^{-3}$.
\begin{figure}
\centering
\includegraphics[width=.8\linewidth]{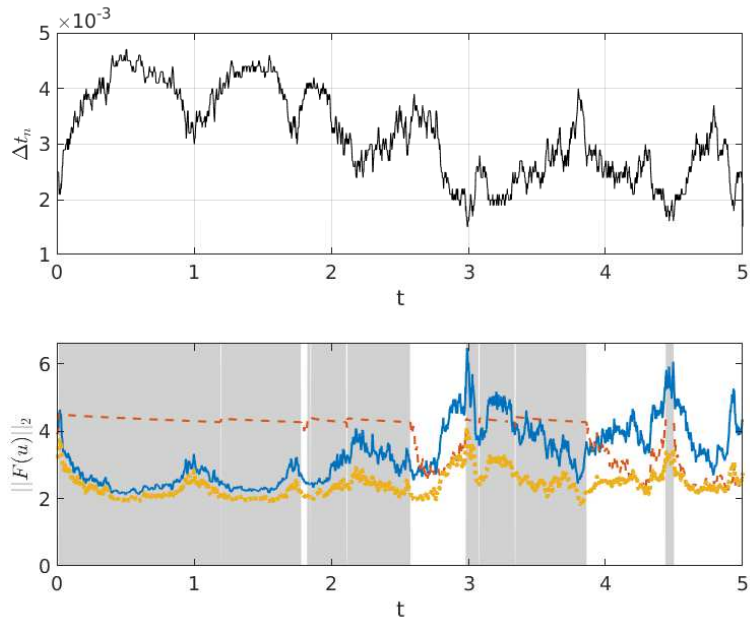}
\caption{Upper plot: change in $\Delta t_{n}$ over time for a single solve of Allen-Cahn equation. Lower plot: size of the non-linearity $\norm{F(X)}$ for the same realisation. The solid blue line corresponding to \adptone{}, dashed orange for \stp{} and dotted yellow for \tamedexp{}. The shaded grey area indicates the times the stopped method excluded the non-linear contributions.}
\label{fig:AC_change_in_dt}
\end{figure}

\subsection{Swift-Hohenberg equation}

We consider similar numerical experiments as in the previous section, with the Swift-Hohenberg SPDE \citep{Cross1993}. This is a $4^{th}$ order SPDE that arises in applications involving pattern formation such as fluid flow and the study of neural tissue. It is defined as
\begin{equation}
dX = \left[ \eta X - (1+\Delta)^{2}X + cX^{2}-X^{3} \right] dt + \sigma X dW \nonumber.
\end{equation}

We set the parameters $\eta = −0.7, c = 1.8$ and $\sigma = 1$, then solve to a final time of $T=5$ with $N_{x}=512$ points in the spatial domain $[0,32\pi]$. We plot example solutions in \Cref{fig:SH_solution_top} for all schemes. We clearly see the difference between \adptone{}, which faithfully recreates the patterns formed by the reference solution, and \stp{} which is unable to do so for this realisation. Again this effect is due to the size of $\Delta t$ being too large when compared to $\norm{F(X)}$ and after sufficient reduction in $\Delta t$ we would of course see the correct pattern appearing for the stopped method.

\begin{figure}
\centering
\includegraphics[width=.9\linewidth]{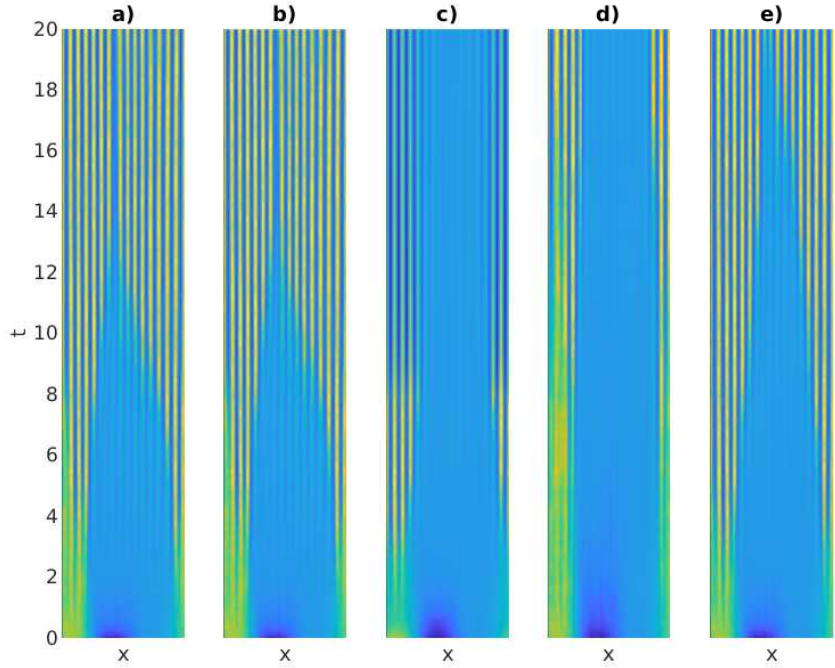}
\caption{Example solution of the Swift-Hohenberg equation using: a) Reference solution, b) \adptone{}, c) \stp{}, d) \tamed{} and e) \tamedexp{}. For all solutions parameters used $\Delta t_{\max}=10^{-2}$, $r=0$ and $\rho=100$. For fixed step methods $\overline{\Delta t}=1.5\cdot 10^{-3}$.}
\label{fig:SH_solution_top}
\end{figure}
\begin{figure}
\centering
\begin{subfigure}{.5\textwidth}
  \centering
  \includegraphics[width=.9\linewidth]{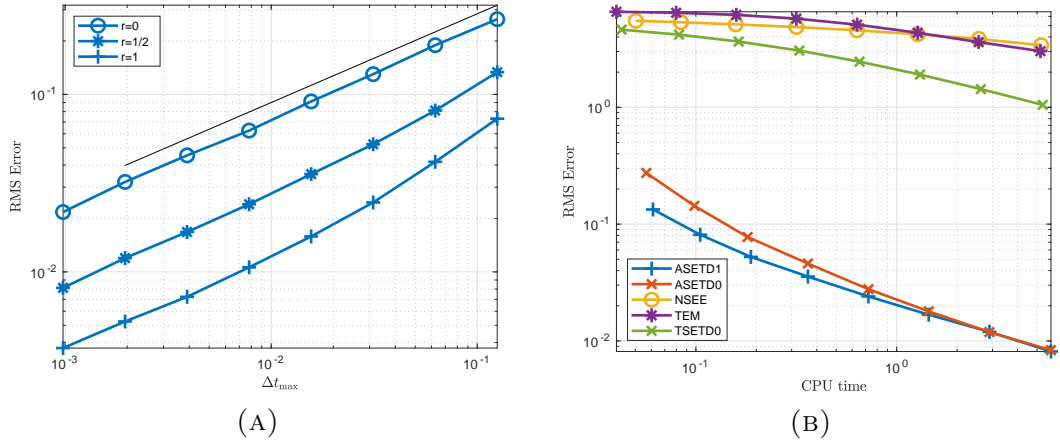}
  \caption{ }
  \label{fig:SH_dt_converge}
\end{subfigure}%
\begin{subfigure}{.5\textwidth}
  \centering
  \includegraphics[width=.9\linewidth]{{{./SH_err_vs_CPU_r0.5}}}
  \caption{ }
  \label{fig:SH_err_vs_cpu_r0}
\end{subfigure}
\caption{Plot a) convergence of \adptone{} with respect to $\Delta t_{\max}$. We show convergence for a range of noise regularity parameters, $r\in\{0,\frac{1}{2},1\}$ with reference slope of order $\frac{1}{2}$. Plot b) shows the RMS error against average CPU time for all schemes and noise regularity $r=\frac{1}{2}$.}
\end{figure}
\Cref{fig:SH_err_vs_cpu_r0} shows the RMS error against CPU time for the various methods and $r=\frac{1}{2}$. The smoother noise shows the gain in accuracy of \adptone{} over \adptzero{} at larger values of $\Delta t_{\max}$. It is clear that \stp{} and \tamed{} suffer at these values of $\overline{\Delta t}$ and cannot accurately solve the $4^{th}$ order SPDE. \adptone{} and \adptzero{} produce solutions with significantly lower errors, even when $\Delta t_{\max}$ is very large. We observe the expected convergence rate for \adptone{} in \Cref{fig:SH_dt_converge} over the entire range of $\Delta t_{\max}$. In comparison, the fixed step methods are only starting to reach their asymptotic convergence rate at the smallest values of $\Delta t$. We remark that after reducing $\overline{\Delta t}$ to a sufficiently small value, we would see the same behaviour as in \Cref{fig:AC_err_vs_cpu_r0}, where the error in \stp{} jumps down and agrees with \adptzero{}. However it should be noted that these experiments were carried out for 1D spectral Galerkin and thus are relatively inexpensive to run. For a large finite element discretisation in higher dimensions, it may be difficult to reach a sufficiently small $\overline{\Delta t}$ to ensure \stp{} always includes the (important) non-linear contributions. 

\section{Conditional \ito{} Isometry and Regularity of Mild Solution}\label{sec:cond_ito}
In this section, we prove a conditional \ito{} isometry for the $Q$-Wiener process and prove a conditional regularity result for the mild solution, which are required for the analysis of the numerical method. 
\begin{lemma}\label{thm_cond_ito_isometry}
Let $B: U_{0} \rightarrow H$ be an $L_{0}^{2}$-valued predictable process such that
\begin{equation}
\int_{0}^{T} \mathbb{E}\Big{[} \Bnorm{B(s)}^{2} \Big{]} ds < \infty, \nonumber
\end{equation}
and let $\mu, \tau$ be two stopping times with $0\leq \mu \leq \tau \leq T$. Then the following identities hold,
\begin{align}
\Phi:=\EcondF{
\norm{ \int_{\mu}^{\tau} B(s) dW(s) }^{2} }
&= 
\EcondF{\int_{\mu}^{\tau} \Bnorm{B(s)}^{2} ds }\label{eq_ito_isometry_SPDE} \\
\EcondF{\int_{\mu}^{\tau} B(s) dW(s) } &=0. \label{eq_ito_zero_mean_SPDE}
\end{align}
\end{lemma}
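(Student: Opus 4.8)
The plan is to reduce both identities to the martingale structure of the stochastic integral together with the optional stopping theorem, working in the regime $\rho \le \mu$ (equivalently $\mathcal{F}_\rho \subseteq \mathcal{F}_\mu$), which is the relevant case for the application. Set $M(t) := \int_0^t B(s)\,dW(s)$; under the stated integrability hypothesis this is a continuous, square-integrable $H$-valued $\mathcal{F}_t$-martingale (\citep{DaPrato1992}), and $\int_\mu^\tau B(s)\,dW(s) = M(\tau) - M(\mu)$. Since $\mu,\tau,\rho$ are bounded by $T$ and $M$ is bounded in $L^2$ on $[0,T]$, hence uniformly integrable, the optional stopping theorem is available for the relevant stopped martingales.

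For the zero-mean identity \eqref{eq_ito_zero_mean_SPDE}, I would apply optional stopping twice to obtain $\EcondF{M(\tau)} = M(\rho)$ and $\EcondF{M(\mu)} = M(\rho)$, using $\rho \le \mu \le \tau \le T$; subtracting gives $\EcondF{M(\tau) - M(\mu)} = 0$, which is exactly the claim.

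For the isometry \eqref{eq_ito_isometry_SPDE}, the additional ingredient is that the compensated process $N(t) := \norm{M(t)}^2 - \int_0^t \Bnorm{B(s)}^2\,ds$ is itself an $\mathcal{F}_t$-martingale (the angle-bracket / quadratic-variation identity for Hilbert-space valued integrals, \citep{DaPrato1992}). I would first condition on $\mathcal{F}_\mu$ and use the algebraic identity $\norm{M(\tau) - M(\mu)}^2 = \norm{M(\tau)}^2 - \norm{M(\mu)}^2 - 2\langle M(\mu), M(\tau) - M(\mu)\rangle$. The cross term vanishes after conditioning, since $M(\mu)$ is $\mathcal{F}_\mu$-measurable and $\mathbb{E}[M(\tau)-M(\mu)\mid\mathcal{F}_\mu]=0$ by the zero-mean step, while optional stopping applied to $N$ gives $\mathbb{E}[\norm{M(\tau)}^2 - \norm{M(\mu)}^2 \mid \mathcal{F}_\mu] = \mathbb{E}[\int_\mu^\tau \Bnorm{B(s)}^2\,ds \mid \mathcal{F}_\mu]$. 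Combining yields $\mathbb{E}[\norm{M(\tau)-M(\mu)}^2\mid\mathcal{F}_\mu] = \mathbb{E}[\int_\mu^\tau \Bnorm{B(s)}^2\,ds\mid\mathcal{F}_\mu]$, and the tower property with $\mathcal{F}_\rho\subseteq\mathcal{F}_\mu$ then delivers \eqref{eq_ito_isometry_SPDE}.

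I expect the only genuinely delicate point to be the rigorous justification of optional stopping between arbitrary stopping times for the Hilbert-space valued martingales $M$ and $N$ — confirming that the integrability assumption yields uniform integrability on $[0,T]$ and that the angle-bracket identity is stable under stopping — rather than the formal algebra above. A robust alternative that avoids invoking this machinery directly is to establish both identities first for elementary predictable integrands, where the integral is a finite sum of increments and the conditional computations are explicit (using that $\mathbbm{1}_{[\mu,\tau)}$ is predictable, so that $\mathbbm{1}_{[\mu,\tau)}B$ is an admissible integrand), and then to pass to general $B$ by the standard $L^2$-approximation, checking that conditioning commutes with the limit.
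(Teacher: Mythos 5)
Your proposal is correct, but it takes a genuinely different route from the paper. The paper's proof (stated only in outline) expands $\Phi$ in an orthonormal basis of $H$, following Theorem 10.16 of \citep{lord2014introduction}, so that the $H$-valued integral becomes a series of scalar stochastic integrals against the independent Brownian motions $\beta_j$; it then applies the scalar conditional \ito{} results between stopping times — Theorem 5.21 and Lemma 5.22 of \citep{mao2008stochastic} — termwise, the conditional cross terms vanishing by independence. You instead argue intrinsically at the Hilbert-space level: optional stopping for the square-integrable martingale $M(t)=\int_0^t B(s)\,dW(s)$ gives \cref{eq_ito_zero_mean_SPDE}, and optional stopping for the real-valued compensated martingale $N(t)=\norm{M(t)}^2-\int_0^t\Bnorm{B(s)}^2\,ds$ (the angle-bracket identity of \citep{DaPrato1992}), combined with the polarization identity whose cross term dies under $\mathbb{E}[\,\cdot\mid\mathcal{F}_\mu]$, gives \cref{eq_ito_isometry_SPDE}; the tower property then transfers both to $\mathcal{F}_\rho$. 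The paper's route buys citability — all probabilistic content is outsourced to standard scalar lemmas — at the cost of bookkeeping: one must justify interchanging the conditional expectation with the double series over the $H$- and $U$-bases. Your route is coordinate-free and shorter, and you correctly identify the one genuinely delicate point, uniform integrability for optional stopping between arbitrary stopping times; this is indeed available here, since Doob's maximal inequality gives $\mathbb{E}\sup_{t\le T}\norm{M(t)}^2\le 4\,\mathbb{E}\norm{M(T)}^2<\infty$, which together with the integrability hypothesis dominates $N$ on $[0,T]$. Your fallback via elementary predictable integrands and $L^2$-passage is also sound and essentially reproves, in $H$, what Mao's scalar lemmas encapsulate. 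One further merit: the paper's statement conditions on $\mathcal{F}_\rho$ while introducing only $\mu\le\tau$; your standing assumption $\rho\le\mu$ (so $\mathcal{F}_\rho\subseteq\mathcal{F}_\mu$) is exactly the regime used in the paper's application, where $\rho=\mu=t_n$ and the conditioning $\sigma$-algebra is $\mathcal{G}_n$, so your reading resolves this ambiguity correctly.
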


\begin{proof}
The proof is an expansion of the stochastic term $\Phi$ as an inner product in a basis of $H$, similarly to Theorem 10.16 in \citep{lord2014introduction}. Then an application of Theorem 5.21 and Lemma 5.22 from \citep{mao2008stochastic} to the individual terms of the expansion yields the result.
\end{proof}
We now prove a conditional regularity in time result for the mild solution, similar to standard regularity results for Lipschitz drift but of lower order. Conditional expectations are required throughout the analysis due to \Cref{rmk:dtmeasurable}.

\begin{lemma}\label{lemma_cond_regularity_time}
Let $X$ be the mild solution given by \Cref{eq_SPDE_mild_sol} and that \Cref{assump_A_semigroup_generator,assump_F_local_lipschitz,assump_DF_growth,assump_B_lip,assump_B_growth} hold. Let $t_{n},t_{n+1}\in [0,T]$ be such that $\Delta t_{n+1} := t_{n+1}-t_{n} > 0$ and $\epsilon>0$. The following conditional regularity estimates holds for an a.s. finite, $\mathcal{G}_{n}$-measurable random variable $\Kreg$, finite constant $\tilde{\Kreg}$ and constant $p\geq 1$
\begin{equation}
\Lcondnorm{X(t_{n+1})-X(t_{n})}^{2}
\leq 
\Kreg \quad a.s,
\quad \supT \mathbb{E}[K_{1}^{p}]\leq \Konep{p} \Delta t_{n+1}^{(1-\epsilon)p}.
\end{equation}
\end{lemma}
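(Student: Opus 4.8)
The plan is to estimate the conditional second moment of the increment termwise. Writing $\Delta t:=\Delta t_{n+1}$ and using $S(t_{n+1}-s)=S(\Delta t)S(t_n-s)$, the mild solution formula \Cref{eq_SPDE_mild_sol} gives the decomposition
\begin{align*}
X(t_{n+1})-X(t_{n})
&= (S(\Delta t)-I)S(t_n)X_0 \\
&\quad + \int_{t_n}^{t_{n+1}} S(t_{n+1}-s)F(X(s))\,ds
+ \int_0^{t_n}(S(\Delta t)-I)S(t_n-s)F(X(s))\,ds \\
&\quad + \int_{t_n}^{t_{n+1}} S(t_{n+1}-s)B(X(s))\,dW(s)
+ \int_0^{t_n}(S(\Delta t)-I)S(t_n-s)B(X(s))\,dW(s).
\end{align*}
Using $\norm{\sum_{i=1}^5 y_i}^2\le 5\sum_{i=1}^5\norm{y_i}^2$ pointwise and then taking $\Econd{\cdot}$, it suffices to bound the conditional second moment of each of the five pieces; the crude constant $5$ avoids having to track cross terms.

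For the deterministic pieces I would use the smoothing bounds of \Cref{lemma_Stv-v} together with \Cref{eq_I-St_v_bnd}. Since $X_0\in\DAs{1/2}$, the semigroup term is bounded by $\Cs\Delta t\,\norm{X_0}_1^2$ (taking $\alpha=1$ in \Cref{eq_I-St_v_bnd}). The ``new interval'' drift integral is handled by Cauchy--Schwarz in $s$ and the polynomial bound \Cref{eq_F_growth}, giving order $\Delta t^2$. For the drift difference I factor $(S(\Delta t)-I)S(t_n-s)$ through $\negA{-a}$ to obtain the pointwise estimate $\Cs^2\Delta t^a(t_n-s)^{-a}\norm{F(X(s))}$ for $0\le a<1$; Cauchy--Schwarz against the integrable kernel $(t_n-s)^{-a}$ then yields a term of order $\Delta t^{2a}$ times $\Econd{\int_0^{t_n}(t_n-s)^{-a}\norm{F(X(s))}^2\,ds}$.

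For the two stochastic pieces the essential tool is the conditional \ito{} isometry \Cref{thm_cond_ito_isometry}, applicable precisely because the integration limits are stopping times and we condition on $\mathcal{G}_n$. The new-interval term becomes $\Econd{\int_{t_n}^{t_{n+1}}\Bnorm{S(t_{n+1}-s)B(X(s))}^2\,ds}$, which is of order $\Delta t$ using $\Onorm{S(t)}{H}=1$ and \Cref{assump_B_lip}. For the stochastic difference I again factor through $\negA{-b}$, split $\negA{b}=\negA{b-r/2}\negA{r/2}$ and apply \Cref{assump_B_growth}; for $r/2\le b<(1+r)/2$ this gives order $\Delta t^{2b}$ times $\Econd{\int_0^{t_n}(t_n-s)^{-(2b-r)}(1+\norm{X(s)}_r)^2\,ds}$, the constraint $2b-r<1$ guaranteeing integrability. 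Choosing the free exponents $a,b$ so that every power of $\Delta t$ is at least $1-\epsilon$ (the stochastic difference with $r=0$ is the binding case, forcing $2b<1$ and hence the loss of $\epsilon$), I collect the five bounds into one $\mathcal{G}_n$-measurable random variable $\Kreg=\Delta t^{1-\epsilon}Z$, where $Z$ is a finite sum of conditional expectations of polynomial functionals of $X$; these are a.s.\ finite by \Cref{thm:moment_bounds}.

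For the moment estimate I write $\mathbb{E}[\Kreg^p]=\Delta t_{n+1}^{(1-\epsilon)p}\,\mathbb{E}[Z^p]$ and bound $\mathbb{E}[Z^p]$ uniformly: conditional Jensen pulls the $p$-th power inside each $\Econd{\cdot}$, the tower property removes the conditioning, and H\"older against the normalised finite kernels $(t_n-s)^{-a}\,ds$ and $(t_n-s)^{-(2b-r)}\,ds$ together with \Cref{eq_F_growth} reduces everything to a finite combination of $\supT\mathbb{E}\norm{X(s)}_\eta^q$ with $\eta\in\{0,r\}$, all finite by \Cref{thm:moment_bounds}. This yields $\supT\mathbb{E}[\Kreg^p]\le\Konep{p}\Delta t_{n+1}^{(1-\epsilon)p}$. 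I expect the main obstacle to be the conditional bookkeeping: verifying that each bound is genuinely $\mathcal{G}_n$-measurable and a.s.\ finite and that \Cref{thm_cond_ito_isometry} applies, while the non-Lipschitz, polynomially growing $F$ forces the final step to use arbitrarily high moments of $\norm{X(s)}$, so the full strength of \Cref{thm:moment_bounds} is needed.
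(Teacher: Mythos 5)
Your five-term decomposition and your treatment of the three terms supported on the new interval are fine, but the estimate of the stochastic difference term $\int_0^{t_n}(S(\Delta t_{n+1})-I)S(t_n-s)B(X(s))\,dW(s)$ contains a genuine gap in this adaptive setting. Here $t_n$ is a stopping time and $\Delta t_{n+1}$ is only $\mathcal{G}_n$-measurable, so the integrand $(S(\Delta t_{n+1})-I)S(t_n-s)B(X(s))$ is \emph{not} predictable on $[0,t_n]$: at time $s<t_n$ it depends on the future quantities $t_n$ and $\Delta t_{n+1}$. Consequently this expression is not an It\^o integral of an admissible integrand at all — it has to be read as the $\mathcal{G}_n$-measurable operator $(S(\Delta t_{n+1})-I)$ applied to the continuous modification of the stochastic convolution $Y(t)=\int_0^t S(t-s)B(X(s))\,dW(s)$ evaluated at $t_n$ — and \Cref{thm_cond_ito_isometry}, which requires a predictable integrand and is used with the conditioning $\sigma$-algebra attached to the \emph{left} endpoint of the integration window, does not apply. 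Worse, conditioning on $\mathcal{G}_n$ cannot produce the averaged bound you claim: $(S(\Delta t_{n+1})-I)Y(t_n)$ is itself $\mathcal{G}_n$-measurable, so $\Econd{\norm{(S(\Delta t_{n+1})-I)Y(t_n)}^2}=\norm{(S(\Delta t_{n+1})-I)Y(t_n)}^2$ a.s.; no isometry in time is available for this term, and any valid bound must dominate it pathwise, not merely in conditional mean.

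The repair is exactly the paper's (shorter) route: do not re-expand the history at all. Write $\norm{X(t_{n+1})-X(t_n)}^2 \le 2\norm{X(t_{n+1})-S(\Delta t_{n+1})X(t_n)}^2+2\norm{(I-S(\Delta t_{n+1}))X(t_n)}^2$; the second term is bounded \emph{pathwise} by \cref{eq_I-St_v_bnd} as $\Cs\,\Delta t_{n+1}^{1-\epsilon}\norm{X(t_n)}_{1-\epsilon}^2$, using the spatial regularity of the mild solution from \Cref{thm:moment_bounds} with $\eta=1-\epsilon$. This single $\mathcal{G}_n$-measurable quantity absorbs your three history terms (initial data, drift difference, stochastic difference) at once, and its $p$-th moments are controlled by \cref{eq_moment_bounds} — which is also where the $\Delta t_{n+1}^{1-\epsilon}$ rate and the loss of $\epsilon$ actually originate, rather than from your exponent $2b<1+r$. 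The first term consists only of the new-interval integrals, which you already handle correctly: Jensen and \cref{eq_F_growth} for the drift, and \Cref{thm_cond_ito_isometry} for the diffusion, legitimate there precisely because $t_{n+1}=t_n+\Delta t_{n+1}$ is known at time $t_n$, so the integrand is predictable on $[t_n,t_{n+1}]$. Your moment bookkeeping (conditional Jensen, tower property, H\"older, \Cref{thm:moment_bounds}) then goes through unchanged once $\Kreg$ is defined from these three pieces.
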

\begin{proof}
The solution $X(t_{n}) \in \DAs{\frac{\eta}{2} }$ for $\eta \in [0,1)$ by \Cref{eq_moment_bounds}. We can decompose the error terms between any two elements $u,v\in \DAs{\frac{\eta}{2} }$ using \Cref{lemma_Stv-v}, in particular \Cref{eq_I-St_v_bnd} and the triangle inequality as follows
\begin{align}
\norm{u-v}^{2} 
&= 
\norm{u-S(\Delta t_{n+1})v+S(\Delta t_{n+1})v-v}^{2} \nonumber \\
&\leq  
2\norm{u-S(\Delta t_{n+1})v}^{2} + 2\norm{(I-S(\Delta t_{n+1}))v}^{2} \nonumber \\
&\leq 
2\norm{u-S(\Delta t_{n+1})v}^{2} + 2\Cs\Delta t_{n+1}^{\eta}\norm{v}^{2}_{\eta}. \nonumber 
\end{align}
Setting $u=X(t_{n+1})$ and $v=X(t_{n})$ we have,
\begin{align}
\norm{X(t_{n+1})-X(t_{n})}^{2} 
&\leq 
2(\Cs\Delta t_{n+1}^{\eta}\norm{X(t_{n})}^{2}_{\eta} \nonumber \\
&\breakgap + 
\norm{X(t_{n+1})-S(\Delta t_{n+1})X(t_{n})}^{2}) \nonumber \\
&\leq 
\bigg{(} 2\Cs\Delta t_{n+1}^{\eta} \norm{X(t_{n})}^{2}_{\eta} \nonumber\\
&\breakgap+ 
4\norm{\int_{t_{n}}^{t_{n+1}} S(t_{n+1}-s)F(X(s)) ds }^{2} \nonumber\\
&\breakgap+ 
4\norm{\int_{t_{n}}^{t_{n+1}} S(t_{n+1}-s)B(X(s))dW(s)}^{2}\bigg{)} \nonumber\\
&:=
\Big{(} 2\norm{I}^{2} + 4\norm{II}^{2} + 4\norm{III}^{2}\Big{)} .\label{eq_reg_time_parts}
\end{align}
The first term is directly bound by \Cref{eq_moment_bounds} with $\eta = 1-\epsilon$, $\epsilon>0$,
\begin{align}
\Lcondnorm{I}^{2}
&\leq 
\Cs\Delta t_{n+1}^{1-\epsilon}
\Econd{ \norm{X(t_{n})}^{2}_{1-\epsilon} }. \label{eq:regI}
\end{align}
For part $II$, we apply Jensen's inequality, the boundedness of the semigroup, \Cref{eq_F_growth} and \Cref{thm:moment_bounds}
\begin{align}
\Lcondnorm{II}^{2}
&= 
\Lcondnorm{\int_{t_{n}}^{t_{n+1}} S(t_{n+1}-s) F(X(s)) ds}^{2} \nonumber\\
&\leq 
\Delta t_{n+1}\int_{t_{n}}^{t_{n+1}}
\Lcondnorm{S(t_{n+1}-s) F(X(s))}^{2} ds \nonumber\\
&\leq
\Delta t_{n+1}\int_{t_{n}}^{t_{n+1}}
\Lcondnorm{F(X(s))}^{2} ds \nonumber\\
&\leq
2c_{3}^{2}\Delta t_{n+1}
\Econd{ \int_{t_{n}}^{t_{n+1}}
(1+\norm{X(s)}^{2c_{2}+2}) ds
}. \label{eq:regII}
\end{align}
For part $III$ we show, via \Cref{thm_cond_ito_isometry}, boundedness of the semigroup, \Cref{assump_B_growth} and \Cref{thm:moment_bounds}, that
\begin{align}
\Lcondnorm{III}^{2}
&= 
\Lcondnorm{\int_{t_{n}}^{t_{n+1}} S(t_{n+1}-s) B(X(s)) dW(s)}^{2} \nonumber\\
&= 
\Econd{
\int_{t_{n}}^{t_{n+1}} 
\Bnorm{S(t_{n+1}-s) B(X(s))}^{2} ds
}\nonumber\\
&\leq 
2\LBr^{2} \Econd{\int_{t_{n}}^{t_{n+1}} ( 1 + \norm{X(s)}^{2} ) ds }. \label{eq:regIII}
\end{align}
We can now define the a.s. finite $\mathcal{G}_{n}$-measurable random variable $\Kreg$ as
\begin{align}
\Kreg
:=
2\left(
\Cs\ \Delta t_{n+1}^{1-\epsilon }\Econd{ \norm{X(t_{n})}^{2}_{1-\epsilon} } \right. 
&+ 8c_{3}^{2}\Delta t_{n+1}\Econd{\int_{t_{n}}^{t_{n+1}} (1+\norm{X(s)}^{2c_{2}+2}) ds}  \nonumber \\
& \left. + 8\LBr^{2} \Econd{\int_{t_{n}}^{t_{n+1}} (1 + \norm{X(s)}^{2}) ds} \right).\nonumber
\end{align}
Together, \Cref{eq:regI,eq:regII,eq:regIII} imply
\begin{equation}
\Lcondnorm{X(t_{n+1})-X(t_{n})}^{2}
\leq 
\Kreg \quad a.s. \nonumber
\end{equation}
Further, we have for $p\geq 1$
\begin{align}
\supT \mathbb{E} \left[ \Kreg^{p} \right] 
&\leq
2.3^{p}\Cs^{p} \Delta t_{n+1}^{(1-\epsilon)p} \supT \mathbb{E}\left[ \norm{X(s)}^{2p}_{1-\epsilon} \right]
+
8.3^{p}c_{3}^{2p}\Delta t_{n+1}^{2p} \supT \mathbb{E}\left[ 1+\norm{X(s)}^{(2c_{2}+2)p} \right] \nonumber \\
&+
8.3^{p}\LBr^{2p}\Delta t_{n+1}^{p} \supT \mathbb{E}\left[ 1 + \norm{X(s)}^{2p} \right] \nonumber \\
&=:
\Delta t_{n+1}^{(1-\epsilon)p} \Konep{p}
\end{align}

and \Cref{thm:moment_bounds} implies that $\Konep{p}<\infty$.
\end{proof}

For the analysis of the numerical scheme, we require conditional bounds on the remainder terms when we Taylor expand the drift and diffusion terms.

\begin{lemma}\label{lemma_remainder_terms}
Define the remainder terms of $F$ and $B$ from the Taylor expansions as follows,
\begin{align}
F(X(s)) &= F(X(t_{n})) + R_{F}(s,t_{n},X(t_{n}) ) \nonumber \\
B(X(s)) &= B(X(t_{n})) + R_{B}(s,t_{n},X(t_{n}) ), \nonumber
\end{align}
with $R_{F}$ and $R_{B}$ given by
\begin{equation}
R_{z}(s,t_{n},X(t_{n}) ) : = 
\int_{0}^{1} Dz
\left( X(t_{n}) + \tau ( X(s)-X(t_{n}) ) \right) 
( X(s)-X(t_{n}) ) d\tau, \nonumber 
\end{equation}
for $z$ as either $F$ or $B$. Then the following bounds hold for a.s. finite $\mathcal{G}_{n}$-measurable random variables $\KF, \KFsq, \KB$ and finite constants $\tilde{\KF}, \tilde{\KFsq}, \tilde{\KB}$
\begin{align*}
i) \quad 
\Econd{ \norm{ \int_{t_{n}}^{t_{n+1}}R_{F}(s,t_{n},X(t_{n}) )  ds } }
&\leq 
\KF \Delta t_{n+1} \quad a.s,  \nonumber \\
 \supTE{\KF\Delta t_{n+1} }  &\leq \tilde{\KF}\Delta t_{n+1}^{\frac{3}{2}-\epsilon}, \nonumber \\
ii) \quad 
\Econd{ \norm{ \int_{t_{n}}^{t_{n+1}}R_{F}(s,t_{n},X(t_{n}) ) ds }^{2} }
&\leq 
\KFsq \Delta t_{n+1} \quad a.s,  \nonumber \\
  \supTE{\KFsq\Delta t_{n+1} }  &\leq \tilde{\KFsq}\Delta t_{n+1}^{3-\epsilon}, \nonumber \\
iii) \quad 
\Econd{ \norm{ \int_{t_{n}}^{t_{n+1}}R_{B}(s,t_{n},X(t_{n}) ) dW(s) }^{2} }
&\leq 
\KB \Delta t_{n+1} \quad a.s, \\
  \supTE{\KB\Delta t_{n+1} }  &\leq \tilde{\KB}\Delta t_{n+1}^{2-\epsilon}. \nonumber 
\end{align*}
\end{lemma}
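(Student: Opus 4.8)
The plan is to reduce every bound to the conditional temporal regularity of \Cref{lemma_cond_regularity_time}, which controls $\Econd{\norm{X(s)-X(t_n)}^2}\le\Kreg$ (the estimate applies verbatim on $[t_n,s]$ for any $s\in[t_n,t_{n+1}]$, with gap $s-t_n\le\Delta t_{n+1}$). First I would record the pointwise bounds on the two integrands. Writing $\RF{s}{t_n}{X(t_n)}=F(X(s))-F(X(t_n))=\int_0^1 DF((1-\tau)X(t_n)+\tau X(s))(X(s)-X(t_n))\,d\tau$, \Cref{assump_DF_growth} together with the convexity estimate $\norm{(1-\tau)X(t_n)+\tau X(s)}\le\norm{X(t_n)}+\norm{X(s)}$ gives $\norm{\RF{s}{t_n}{X(t_n)}}\le c_1\big(1+\tilde c(\norm{X(t_n)}^{c_2}+\norm{X(s)}^{c_2})\big)\norm{X(s)-X(t_n)}$ for a combinatorial constant $\tilde c$. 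Since $R_B(s,t_n,X(t_n))=B(X(s))-B(X(t_n))$, \Cref{assump_B_lip} gives directly $\Bnorm{R_B(s,t_n,X(t_n))}\le\LB\norm{X(s)-X(t_n)}$. The structural difference is already clear: the diffusion remainder is linear in the increment, while the drift remainder carries the super-linear weight $\norm{X(s)}^{c_2}$.

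For part (i) I would bound $\norm{\int R_F\,ds}\le\int\norm{R_F(s)}\,ds$, move $\Econd{\cdot}$ inside the $ds$-integral, and apply conditional Cauchy--Schwarz to separate the weight from the increment. The increment factor is $\Econd{\norm{X(s)-X(t_n)}^2}^{1/2}\le\Kreg^{1/2}$, and the $ds$-integration over $[t_n,t_{n+1}]$ supplies one factor $\Delta t_{n+1}$; this identifies $\KF$ as $c_1\Kreg^{1/2}$ times the $\mathcal{G}_n$-measurable weight $\sup_s\Econd{(1+\tilde c(\norm{X(t_n)}^{c_2}+\norm{X(s)}^{c_2}))^2}^{1/2}$. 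For the moment estimate I would apply (unconditional) Cauchy--Schwarz to $\mathbb{E}[\KF\Delta t_{n+1}]$, using $\mathbb{E}[\Kreg]^{1/2}\le\Konep{1}^{1/2}\Delta t_{n+1}^{(1-\epsilon)/2}$ from \Cref{lemma_cond_regularity_time} and bounding the weight by finite spatial moments via \Cref{thm:moment_bounds}. This produces exponent $\tfrac32-\tfrac\epsilon2$, which absorbs into $\tfrac32-\epsilon$ because $\Delta t_{n+1}\le1$.

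Part (iii) is the cleanest and I would treat it next. Noting that $\mathcal{G}_n$ is exactly the stopping-time $\sigma$-algebra $\mathcal{F}_{t_n}$, the conditional \ito{} isometry \Cref{thm_cond_ito_isometry} (with $\mu=t_n$, $\tau=t_{n+1}$) rewrites $\Econd{\norm{\int_{t_n}^{t_{n+1}}R_B\,dW}^2}$ as $\Econd{\int_{t_n}^{t_{n+1}}\Bnorm{R_B(s,t_n,X(t_n))}^2\,ds}$. The pointwise Lipschitz bound and \Cref{lemma_cond_regularity_time} then give $\le\LB^2\Kreg\Delta t_{n+1}$, so $\KB=\LB^2\Kreg$; because $B$ is Lipschitz only the second conditional moment is needed. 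The moment bound is then immediate from $\mathbb{E}[\Kreg]\le\Konep{1}\Delta t_{n+1}^{1-\epsilon}$, giving exponent $2-\epsilon$.

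Part (ii) carries the main difficulty, so I would leave it for last. Conditional Jensen gives $\norm{\int R_F\,ds}^2\le\Delta t_{n+1}\int\norm{R_F(s)}^2\,ds$, and taking $\Econd{\cdot}$ defines $\KFsq:=\int_{t_n}^{t_{n+1}}\Econd{\norm{R_F(s)}^2}\,ds$, for which the a.s. bound is automatic. The obstacle is the moment estimate: squaring the super-linear weight and invoking Cauchy--Schwarz produces the \emph{fourth} moment $\mathbb{E}[\norm{X(s)-X(t_n)}^4]^{1/2}$ of the increment, which the second-moment \Cref{lemma_cond_regularity_time} does not supply. I would therefore first establish the fourth-moment (more generally $L^{2p}$) temporal regularity $\mathbb{E}\norm{X(s)-X(t_n)}^4\le C\Delta t_{n+1}^{2(1-\epsilon)}$ by repeating the decomposition in \Cref{eq_reg_time_parts} at the $L^4(\Omega)$ level, replacing the conditional \ito{} isometry by a conditional Burkholder--Davis--Gundy inequality for the stochastic term and Jensen for the drift term. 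Granting this, the tower property reduces $\mathbb{E}[\KFsq\Delta t_{n+1}]$ to $\mathbb{E}\big[\Delta t_{n+1}\int\norm{R_F(s)}^2\,ds\big]$, and unconditional Cauchy--Schwarz separates the increment's fourth moment (of order $\Delta t_{n+1}^{1-\epsilon}$ after the root) from a finite weight controlled by \Cref{thm:moment_bounds}; the two explicit $\Delta t_{n+1}$ factors, one from the $ds$-integration and one from the outer product, then give the claimed order $3-\epsilon$. Throughout, $\Delta t_{n+1}$ is handled as the $\mathcal{G}_n$-measurable parameter it is, exactly as in \Cref{lemma_cond_regularity_time}.
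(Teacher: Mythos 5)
Your parts (i) and (iii) follow the paper's proof essentially verbatim: conditional Cauchy--Schwarz to split the polynomially weighted increment from the increment itself, $\Kreg$ from \Cref{lemma_cond_regularity_time} for the increment factor, and for (iii) the observation that $R_{B}(s,t_{n},X(t_{n}))=B(X(s))-B(X(t_{n}))$ combined with the conditional \ito{} isometry of \Cref{thm_cond_ito_isometry} and \Cref{assump_B_lip}, giving exactly the paper's choice $\KB=\LB^{2}\Kreg$. The genuine divergence is part (ii), and you correctly identified the obstruction: a single Cauchy--Schwarz forces a fourth moment of the increment, which \Cref{lemma_cond_regularity_time} does not provide. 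Your fix is to prove an $L^{4}$ conditional temporal regularity estimate by redoing the decomposition \cref{eq_reg_time_parts} with a conditional Burkholder--Davis--Gundy inequality; this is sound and standard, but it requires machinery the paper never establishes --- the paper's only stochastic tool at stopping times is the conditional It\^o isometry, and a conditional BDG for $Q$-Wiener integrals between stopping times would need its own proof (analogous to \Cref{thm_cond_ito_isometry}). The paper instead avoids higher-moment regularity of the increment altogether via an iterated conditional Cauchy--Schwarz bootstrap: it defines $D_{q}(s)$ with weight exponent $2^{q-1}$, applies conditional Cauchy--Schwarz $q$ times for any $q>1-\log_{2}\epsilon$, so that the polynomial weight ends up with the harmless exponent $1/2^{q-1}$ (controlled by \Cref{thm:moment_bounds}) while the increment appears only as $\Kreg^{1-\epsilon}$; the subsequent moment bound then needs only $\mathbb{E}[\Kreg^{p}]$ for $p\geq 1$, which \Cref{lemma_cond_regularity_time} already supplies. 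The trade-off: your route is conceptually cleaner, uses a single Cauchy--Schwarz, and reaches the order $\Delta t_{n+1}^{3-\epsilon}$ directly, but needs a new lemma proved from scratch; the paper's route stays entirely within its existing toolkit at the cost of the dyadic argument and an enlarged epsilon (it ends with $\epsilon'=4\epsilon$, harmless since $\epsilon$ is arbitrary). Both arguments yield the stated rates.
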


\begin{proof}
By \Cref{assump_DF_growth} on $F$ and the Cauchy-Schwarz inequality we have 
\begin{align}
&\Econd{\norm{ R_{F}(s,t_{n},X(t_{n}))} } \nonumber \\
&\leq
\Econd{
\int_{0}^{1}c_{1}(1 + \norm{ X(t_{n}) + \tau (X(s)-X(t_{n}) ) }^{c_{2}})\norm{X(s)-X(t_{n})} d\tau  
} \nonumber \\
&\leq
c_{1} \Lcondnorm{(X(s)-X(t_{n})} \nonumber \\
&\breakgap \times \sqrt{\Econd{
\int_{0}^{1}c_{1}(1 + \norm{X(t_{n}) + \tau (X(s)-X(t_{n}) )}^{c_{2}})^{2}d\tau 
}}. \nonumber 
\end{align}
We can then use \Cref{lemma_cond_regularity_time} to bound the first part by $\sqrt{\Kreg}$ and define an a.s. finite $\mathcal{G}_{n}$-measurable random variable 
\begin{equation}
M_{1} := \Econd{2c_{1}^{2}(1 + 3^{2c_{2}} \norm{X(s)}^{2c_{2}} + 2.3^{2c_{2}}\norm{X(t_{n})}^{2c_{2}} ) },\nonumber 
\end{equation}
to bound the second term. We define $\tilde{M_{1}}$ as 
\begin{equation}
\supT \mathbb{E}\left[ M_{1} \right] 
\leq 
\supT \mathbb{E}\left[ 2c_{1}^{2}(1 + 2.3^{2c_{2}+1} \norm{X(s)}^{2c_{2}} ) \right]
=: \tilde{M_{1}}, \nonumber
\end{equation}
and see that $\tilde{M_{1}}<\infty$ by \Cref{thm:moment_bounds}. Together, these definitions yield
\begin{equation}\label{eq_remainded_bound}
\Econd{ \norm{ R_{F}(s,t_{n},X(t_{n}) ) ds } }
\leq 
\sqrt{\Kreg M_{1}} \quad \text{a.s.} 
\end{equation}
Therefore,
\begin{align}
\Econd{ \norm{ \int_{t_{n}}^{t_{n+1}}R_{F}(s,t_{n},X(t_{n})) ds } }
&\leq
\int_{t_{n}}^{t_{n+1}} \Econd{ \norm{ R_{F}(s,t_{n},X(t_{n})) } } ds\\
&\leq
\KF \Delta t_{n+1} \quad a.s. \nonumber 
\end{align}
with $\KF:= \sqrt{\Kreg M_{1}}$. Application of the Cauchy-Schwarz inequality shows that
\begin{align*}
\supTE{  \KF\Delta t_{n+1} }
&\leq
\left( \supTE{ \Kreg } \right)^{1/2} \left( \supTE{ M_{1} } \right)^{1/2} \Delta t_{n+1}  \\
&\leq \sqrt{ \Konep{1} \tilde{M_{1}} } \Delta t_{n+1}^{ \frac{3}{2}-\epsilon}
=: \tilde{\KF} \Delta t_{n+1}^{ \frac{3}{2}-\epsilon}.
\end{align*}

For part $ii)$, we apply Jensen's inequality
\begin{align}
&\Lcondnorm{\int_{t_{n}}^{t_{n+1}}R_{F}(s,t_{n},X(t_{n})) ds}^{2}  \nonumber \\
&\bigbreakgap \leq
\Delta t_{n+1} \int_{t_{n}}^{t_{n+1}}
\Lcondnorm{R_{F}(s,t_{n},X(t_{n}))}^{2}
ds, \nonumber
\end{align}
and consider $\Lcondnorm{R_{F}(s,t_{n},X(t_{n}))}^{2}$. Define
\begin{equation}
D_{q}(s) : = c_{1}(1+2^{c_{2}+1} \norm{X(t_{n})}^{c_{2}} + 2^{c_{2}} \norm{X(s)}^{c_{2}})^{2^{q-1}}\norm{X(s)-X(t_{n})}, \nonumber
\end{equation}
where $c_{1}$ and $c_{2}$ are defined in \Cref{assump_DF_growth}. We note that $D_{q}$ satisfies the following relation
\begin{equation}
D_{q}^{2}(s) = D_{q+1}(s)\norm{X(s)-X(t_{n})}. \nonumber
\end{equation}
Applying Young's inequality to $\Econd{  D_{q}(s) \norm{X(s)-X_{h}(s)} } $ we see that
\begin{align}
\Econd{  D_{q}(s) \norm{X(s)-X_{h}(s)} } 
&\leq \Econdopen{ \norm{X(s)}^{2} + \norm{X(t_{n})}^{2} }\nonumber \\
&\breakgap \Econdclose{ + \frac{c_{1}^{2}}{2}(1+2^{c_{2}+1} \norm{X(t_{n})}^{c_{2}} + 2^{c_{2}} \norm{X(s)}^{c_{2}})^{2^{q}} }\nonumber \\
&=: \bar{D_{q}}, \nonumber
\end{align}
and
\begin{align}
\supTE{ \bar{D_{q}}^{2/(2^{q-1})} }
&\leq
\supT \left( \mathbb{E}[ 4\norm{X(s)}^{4} + c_{1}^{2}(1+ ( 2^{c_{2}+1} + 2^{c_{2}}) \norm{X(s)}^{c_{2}})^{2^{q+1}} ] \right)^{1/(2^{q-1})} \nonumber \\
&=:\tilde{D_{q}} < \infty, \nonumber
\end{align}
due to \Cref{thm:moment_bounds}. Going back to $\Lcondnorm{R_{F}(s,t_{n},X(t_{n}))}^{2}$ we can apply the Cauchy-Schwarz inequality $q$ successive times, for $q > 1-\log_{2}\epsilon$ together with \Cref{lemma_cond_regularity_time} to show that
\begin{align}
\Lcondnorm{R_{F}(s,t_{n},X(t_{n}))}^{2} 
&\leq 
\Econd{D_{1}(s) \norm{X(s)-X(t_{n})}} \nonumber \\
&\leq
\Econd{D_{q}(s) \norm{X(s)-X(t_{n})}}^{1/(2^{q-1})} \nonumber \\
&\breakgap \times \Lcondnorm{X(s)-X(t_{n})}^{2\sum_{i=2}^{q}1/(2^{i-1})} \nonumber \\
&\leq
\bar{D_{q}}^{1/(2^{q-1})}\Kreg^{1-\epsilon} \quad a.s. \nonumber \label{eq:RF_epsilon_bnd}
\end{align}
Therefore 
\begin{align}
&\Lcondnorm{\int_{t_{n}}^{t_{n+1}}R_{F}(s,t_{n},X(t_{n})) ds}^{2}  \nonumber \\
&\bigbreakgap \leq
\Delta t_{n+1} \int_{t_{n}}^{t_{n+1}}
\Lcondnorm{R_{F}(s,t_{n},X(t_{n}))}^{2}
ds \nonumber \\
&\bigbreakgap \leq
\Delta t_{n+1} \int_{t_{n}}^{t_{n+1}} \bar{D_{q}}^{1/(2^{q-1})}\Kreg^{1-\epsilon} ds \quad a.s. \nonumber \\
&\bigbreakgap =:
\Delta t_{n+1}\KFsq. \nonumber
\end{align}
To complete part ii) we apply the Cauchy-Schwarz inequality to show that
\begin{align*}
\supTE{\Delta t_{n+1}K_{3}}
&\leq \Delta t_{n+1}^{2} 
	\left( \supTE{ D_{q}^{2/(2^{q-1})} } \right)^{1/2} 
	\left( \supTE{K_{1}^{2(1-\epsilon)} } \right)^{1/2} \\
&\leq
	\sqrt{ \tilde{D_{q}} \Konep{2(1-\epsilon)} } \Delta t_{n+1}^{3-4\epsilon+2\epsilon^{2} } \\
&<
	\tilde{\KFsq} \Delta t_{n+1}^{3-\epsilon'},
\end{align*}
with $\tilde{\KFsq}:= \sqrt{ \tilde{D_{q}} \Konep{2(1-\epsilon)} }$ and $\epsilon':= 4\epsilon$.

For part $iii)$ first apply the conditional \ito{} isometry, \Cref{eq_ito_isometry_SPDE}, the definition of $R_{B}$ then \Cref{assump_B_lip} to show that
\begin{align}
&\Econd{ \norm{ \int_{t_{n}}^{t_{n+1}}R_{B}(s,t_{n},X(t_{n})) dW(s) }^{2} } \nonumber \\
&\breakgap = 
\Econd{ \int_{t_{n}}^{t_{n+1}} \Bnorm{ B(X(s)) - B(X(t_{n}))  }^{2} ds }
 \nonumber \\
&\breakgap \leq
\LB^{2}\int_{t_{n}}^{t_{n+1}}
\Lcondnorm{X(s)-X(t_{n})}^{2} ds. \nonumber
\end{align}
\Cref{lemma_cond_regularity_time} shows that
\begin{align}
&\Econd{
\norm{ \int_{t_{n}}^{t_{n+1}} R_{B}(s,t_{n},X(t_{n})) dW(s) }^{2}
} \nonumber \\
&\bigbreakgap \leq
\LB^{2}\Kreg \Delta t_{n+1} =: \KB \Delta t_{n+1}.
\end{align}
Finally by \cref{lemma_cond_regularity_time} we have $\supTE{\KB \Delta t_{n+1} }\leq \tilde{\KB} \Delta t_{n+1}^{2-\epsilon}$ with $\tilde{\KB}:= \LB^{2}\Konep{1}$.
\end{proof}

\section{Proofs of Main Results}\label{sec:proofmain}
We first recall several properties of the error in the semigroup approximation $S_{h}(t)$ of $S(t)$ and state a non-linear generalisation of the Gronwall inequality, which we apply to estimate the error in the spatial discretisation.
\begin{lemma}\label{lemma_Th_approx}
Let $0\leq \alpha \leq \beta \leq 2$, $0\leq \nu \leq 1$ and $h\in (0,1]$. For the approximation operator $T_{h}(t):= S(t) - S_{h}(t)P_{h}$, the following error bounds hold
\begin{alignat*}{3}
i) \norm{T_{h}(t)x} 
&\leq \CT h^{\beta}t^{-\frac{(\beta-\alpha)}{2} }\norm{x}_{\alpha}, 
&\text{ for all } x\in \DAs{ \frac{\alpha}{2} }. \\
ii) \norm{\int_{0}^{t} T_{h}(t) x \hspace{0.1cm} ds }
&\leq \CT h^{2-\nu}\norm{x}_{-\nu},
&\text{ for all } x\in \DAs{ \frac{\nu}{2} }.
\end{alignat*}
\end{lemma}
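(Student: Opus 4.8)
The plan is to treat both bounds as classical finite-element error estimates for the homogeneous linear parabolic problem, in the spirit of \citep{thomee2006}, assembled from the Riesz-projection approximation assumption together with the continuous and discrete smoothing estimates of \Cref{lemma_Stv-v}. For part (i) I would prove the estimate at the extreme values of the parameters and then recover all intermediate exponents by interpolation on the fractional scale $\DAs{\cdot}$. For part (ii) I would exploit the fact that integrating the error operator in time turns it into a \emph{stationary} elliptic solution-operator error, which attains the full order $h^{2}$ and, through a duality argument, extends to the negative-norm data $\norm{x}_{-\nu}$ appearing in the statement.

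For part (i), fix $x\in\DAs{\alpha/2}$ and split the error through the Riesz projection $R_{h}$,
\begin{equation}
T_{h}(t)x=(I-R_{h})S(t)x+(R_{h}S(t)-S_{h}(t)P_{h})x=:\rho(t)+\theta(t).\nonumber
\end{equation}
The projection term is immediate: the approximation assumption on $R_{h}$ gives $\norm{\rho(t)}\le Ch^{\beta}\norm{S(t)x}_{\beta}$ for $\beta\in\{1,2\}$, while writing $\norm{S(t)x}_{\beta}=\norm{\negA{(\beta-\alpha)/2}S(t)\negA{\alpha/2}x}$ and applying \Cref{lemma_Stv-v} supplies the smoothing factor $t^{-(\beta-\alpha)/2}\norm{x}_{\alpha}$. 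The discrete term $\theta(t)$ solves a discrete heat equation forced by the projection error, $\theta'(t)+A_{h}\theta(t)=P_{h}\rho'(t)$, with $\theta(0)=(R_{h}-P_{h})x$; by Duhamel's formula and the discrete analogue of \Cref{lemma_Stv-v}, its size is controlled by $\rho$ and $\rho'$, which are of the same order in $h$. Establishing the endpoints $\beta\in\{1,2\}$, together with the smooth-data case $\alpha=\beta$ in which no time singularity is present, and then interpolating, yields the inequality for every admissible pair $(\alpha,\beta)$.

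For part (ii), I would use the resolvent identities $\int_{0}^{t}S(s)x\,ds=\negA{-1}(I-S(t))x$ and $\int_{0}^{t}S_{h}(s)P_{h}x\,ds=(-A_{h})^{-1}(I-S_{h}(t))P_{h}x$ to write
\begin{equation}
\int_{0}^{t}T_{h}(s)x\,ds=\big(\negA{-1}-(-A_{h})^{-1}P_{h}\big)x-\big(\negA{-1}S(t)-(-A_{h})^{-1}S_{h}(t)P_{h}\big)x=:E_{0}-E_{1}(t).\nonumber
\end{equation}
Here $E_{0}$ is exactly the error of the stationary elliptic solution operator; by the approximation assumption with $s=2$ and an Aubin--Nitsche duality argument (valid by the convex-domain $H^{2}$-regularity recorded in \Cref{sec:FEM}) one obtains $\norm{E_{0}}\le\CT h^{2-\nu}\norm{x}_{-\nu}$ for $0\le\nu\le 1$. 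For $E_{1}(t)$ I would use the Ritz identity $(-A_{h})^{-1}P_{h}=R_{h}\negA{-1}$ to split $E_{1}(t)=T_{h}(t)\negA{-1}x+S_{h}(t)(P_{h}-R_{h})\negA{-1}x$: the first piece is handled by part (i) with $\alpha=\beta=2-\nu$, so that no time singularity appears and $\norm{\negA{-1}x}_{2-\nu}=\norm{x}_{-\nu}$; the second by $\Onorm{S_{h}(t)}{H}\le 1$ together with the projection error of $\negA{-1}x$. Both pieces are of order $h^{2-\nu}\norm{x}_{-\nu}$, which gives the claim.

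I expect the genuine work to lie in the two smoothing estimates rather than in the algebra. In part (i) the delicate point is the bound on $\theta$: since $\rho'(s)$ inherits an extra half-order of time singularity, a direct use of Duhamel's formula is not integrable, and one must integrate by parts in time to transfer the derivative onto $S_{h}(t-s)$, exploiting discrete smoothing while keeping the exponent of $t$ sharp through the interpolation step. In part (ii) the crux is upgrading the elliptic estimate to the negative-norm bound in $\norm{x}_{-\nu}$, which is precisely where the optimal order $h^{2-\nu}$ and the regularity of the dual problem enter. Once these two ingredients are secured, the remaining manipulations are routine.
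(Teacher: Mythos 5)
The paper does not actually prove this lemma: its ``proof'' consists of citing \citep{thomee2006} for part (i) and \citep[Lemma 4.2]{Kruse2014} for part (ii), so your attempt has to be measured against the standard arguments behind those citations, which it reconstructs largely faithfully. Your part (ii) is sound, and in fact slightly simpler than you suggest: testing the defining relations of $A_{h}$ and $R_{h}$ against $V_{h}$ gives the identity $(-A_{h})^{-1}P_{h}=R_{h}\negA{-1}$, so the stationary term is exactly $E_{0}=(I-R_{h})\negA{-1}x$ and no Aubin--Nitsche duality is needed --- interpolating the approximation assumption between $s=1$ and $s=2$ gives $\norm{(I-R_{h})v}\leq Ch^{2-\nu}\norm{v}_{2-\nu}$, and $\norm{\negA{-1}x}_{2-\nu}=\norm{x}_{-\nu}$. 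The same interpolation handles $S_{h}(t)(P_{h}-R_{h})\negA{-1}x$ after writing $P_{h}-R_{h}=P_{h}(I-R_{h})$, and your use of part (i) with $\alpha=\beta=2-\nu$ (no time singularity) is exactly how the bound uniform in $t$ is obtained. (You also silently corrected the statement's typos --- $T_{h}(s)$ under the integral and data in $\DAs{-\nu/2}$ --- which is right.)

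In part (i) the decomposition $T_{h}(t)x=\rho(t)+\theta(t)$ and the endpoint-plus-interpolation strategy are the correct ones (two harmless slips: $A_{h}R_{h}=P_{h}A$ yields $\theta'+A_{h}\theta=-P_{h}\rho'$, and $\theta(0)=(R_{h}-P_{h})x$ as you say). The genuine gap is at the nonsmooth-data endpoint $\alpha=0$, $\beta=2$: the remedy you propose --- integrate by parts in Duhamel's formula to move the derivative off $\rho'$ --- does not close the estimate by itself. After integrating by parts you are left with $\int_{0}^{t}\norm{A_{h}S_{h}(t-s)P_{h}\rho(s)}\,ds$, and the only generic smoothing bound $\Onorm{A_{h}S_{h}(t-s)}{H}\leq C(t-s)^{-1}$ is non-integrable at $s=t$, so the argument as stated produces a divergent (or at best logarithmically lossy) bound. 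Thom\'ee closes this endpoint with additional machinery: weighted energy estimates for $\theta$ (multiplying the $\theta$-equation by $s\theta$, $s^{2}\theta_{t}$, and so on, so that only integrable quantities such as $\int_{0}^{t}s\norm{\rho_{t}(s)}^{2}\,ds$ appear), or equivalently the splitting $T_{h}(t)=T_{h}(t/2)S(t/2)+S_{h}(t/2)P_{h}T_{h}(t/2)$ combined with smoothing of each factor. You correctly located the crux, but the one-line fix would fail as written, and this is precisely where the cited proof does its real work.
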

\begin{proof}
For i) see \citep{thomee2006}. Estimate ii) is proven in \citep[lemma 4.2]{Kruse2014}.
\end{proof}
\begin{lemma}\label{lemma:nonlinGronwall}
Let $u(t)$ be a non-negative function which satisfies the inequality
\begin{equation}
u(t) \leq c + \int_{0}^{t} a u(s) + b u^{\delta}(s) ds, \nonumber
\end{equation}
for $a,b,c, \geq 0$. For $0\leq \delta <1$, $u(t)$ can be bounded by
\begin{equation}
u(t) \leq e^{ at}\left( c^{\delta} + \frac{b}{a}(1-e^{-\delta at}) \right)^{1/\delta} \nonumber
\end{equation}
\end{lemma}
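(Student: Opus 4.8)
The plan is to prove this nonlinear Gronwall inequality by a substitution that linearises the problem. Given
\begin{equation}
u(t) \leq c + \int_{0}^{t} \left( a u(s) + b u^{\delta}(s) \right) ds, \nonumber
\end{equation}
the presence of both $u$ and the sublinear term $u^{\delta}$ with $0 \leq \delta < 1$ suggests working with a Bernoulli-type transformation. First I would define the right-hand side as an auxiliary function, say $v(t) := c + \int_{0}^{t} (a u(s) + b u^{\delta}(s))\, ds$, so that $u(t) \leq v(t)$, $v(0) = c$, and $v$ is absolutely continuous with $v'(t) = a u(t) + b u^{\delta}(t)$. Since the map $x \mapsto a x + b x^{\delta}$ is non-decreasing on $[0,\infty)$ for non-negative coefficients, and $u(t) \leq v(t)$, I obtain the differential inequality $v'(t) \leq a v(t) + b v^{\delta}(t)$.

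The next step is to solve this Bernoulli differential inequality. The hard part — really the only conceptual step — is choosing the correct integrating substitution. I would set $w(t) := v(t)^{1-\delta}$, motivated by the standard trick for Bernoulli equations. Then $w'(t) = (1-\delta) v(t)^{-\delta} v'(t)$, and dividing the differential inequality by $v^{\delta}$ (valid where $v > 0$, which holds since $v \geq c \geq 0$; the degenerate case $c = 0$ can be handled by a limiting argument or by noting $v(t) > 0$ for $t > 0$ once $b > 0$) gives
\begin{equation}
\frac{v'(t)}{v^{\delta}(t)} \leq a v^{1-\delta}(t) + b. \nonumber
\end{equation}
Multiplying through by $(1-\delta)$ yields the linear differential inequality $w'(t) \leq (1-\delta) a\, w(t) + (1-\delta) b$ in the new variable $w$.

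From here the argument is routine linear Gronwall. I would apply the integrating factor $e^{-(1-\delta) a t}$ to the inequality $w'(t) - (1-\delta) a\, w(t) \leq (1-\delta) b$, integrate from $0$ to $t$ using $w(0) = c^{1-\delta}$, and solve to obtain
\begin{equation}
w(t) \leq e^{(1-\delta) a t} c^{1-\delta} + \frac{b}{a}\left( e^{(1-\delta) a t} - 1 \right). \nonumber
\end{equation}
Raising both sides to the power $1/(1-\delta)$ and using $u(t) \leq v(t) = w(t)^{1/(1-\delta)}$ recovers the stated bound after rewriting $e^{(1-\delta)at}$ outside the bracket — noting $w(t)^{1/(1-\delta)} = \left( e^{(1-\delta)at}\left( c^{1-\delta} + \tfrac{b}{a}(1 - e^{-(1-\delta)at}) \right) \right)^{1/(1-\delta)}$. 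I would double-check that the exponent conventions in the paper's stated form (where the exponents appear as $\delta$ and $1/\delta$ rather than $1-\delta$ and $1/(1-\delta)$) match up; if the intended substitution is $w = v^{\delta}$ instead, the algebra is parallel but the roles of the exponents shift, so reconciling the final expression with the precise form printed in the lemma is the one place where care with the bookkeeping is needed. The main obstacle is thus not analytic but purely the correct handling of the exponents and the $a \to 0$ or $c \to 0$ degenerate cases.
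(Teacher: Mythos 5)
Your proof is correct, and it takes a genuinely different route from the paper: the paper gives no argument at all for this lemma, instead citing the general nonlinear Gronwall inequality of \citep{willett1965discrete}, whereas you give a self-contained elementary derivation via the auxiliary function $v(t)=c+\int_0^t\left(au(s)+bu^{\delta}(s)\right)ds$, the monotonicity of $x\mapsto ax+bx^{\delta}$ on $[0,\infty)$, and the Bernoulli substitution $w=v^{1-\delta}$ followed by linear Gronwall. Your route buys transparency and explicit control of the degenerate cases (division by $v^{\delta}$ when $c=0$, and the $a\to 0$ limit), which the bare citation hides; the citation buys brevity and a statement valid for time-dependent coefficients, which the paper does not actually need here. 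Each step of your derivation checks out: $v'\le av+bv^{\delta}$, $w'\le(1-\delta)(aw+b)$ with $w(0)=c^{1-\delta}$, and hence $u(t)\le e^{at}\bigl(c^{1-\delta}+\tfrac{b}{a}(1-e^{-(1-\delta)at})\bigr)^{1/(1-\delta)}$.

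On the exponent bookkeeping you flag at the end: your form is the correct one, and the hedge can be removed. The alternative substitution $w=v^{\delta}$ that you float does \emph{not} linearise the inequality --- it yields $w'\le \delta\bigl(aw+b\,w^{(2\delta-1)/\delta}\bigr)$, which is affine in $w$ only when $\delta=\tfrac12$ --- so no parallel computation produces the printed exponents. A sanity check confirms this: letting $a\to 0$, the exact solution of $u'=bu^{\delta}$, $u(0)=c$, is $u(t)=\bigl(c^{1-\delta}+(1-\delta)bt\bigr)^{1/(1-\delta)}$, which matches your bound but agrees with the printed bound $e^{at}\bigl(c^{\delta}+\tfrac{b}{a}(1-e^{-\delta at})\bigr)^{1/\delta}$ only at $\delta=\tfrac12$, where $1-\delta=\delta$. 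The statement of \Cref{lemma:nonlinGronwall} as printed therefore contains a typo: $\delta$ and $1-\delta$ are interchanged in the conclusion (equivalently, the hypothesis should read $bu^{1-\delta}$). This reading is consistent with how the lemma is invoked in the proof of \Cref{thm:Eh}, where the nonlinear term is $\norm{E_{h}(s)}^{2-\delta}$, i.e.\ $u^{1-\delta/2}$ with $u=\norm{E_{h}}^{2}$, so that the relevant exponent in the conclusion is indeed ``one minus the exponent in the hypothesis.''
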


\begin{proof}
This is a specific form of the non-linear Gronwall inequality for homogeneous coefficients, which is proved in generality in \citep{willett1965discrete}.
\end{proof}
We decompose the total error at time $t_{n}$ as
\begin{align}
\underbrace{ X(t_{n})-X_{h}^{n} }_{:= E_{n} }
= \underbrace{ X(t_{n})-X_{h}(t_{n}) }_{:= E_{h}(t_{n}) } + 
\underbrace{X_{h}(t_{n})-X_{h}^{n} }_{:= E_{k}(t_{n}) }. \nonumber 
\end{align}
The total error $E_{n}$ can be bounded by
\begin{equation}\label{eq:full_bound}
\Lnorm{E_{n}}^{2} \leq 2 \Lnorm{E_{h}(t_{n})}^{2} + 2\Lnorm{E_{k}(t_{n})}^{2}, 
\end{equation}
and we consider the error terms in space, $E_{h}$, and time, $E_{k}$, separately. 

\begin{proof}[Proof of Theorem \ref{thm:Eh}]

The spatial approximation error $E_{h}$ can be written using the error operator introduced in \Cref{lemma_Th_approx} as,
\begin{align}
E_{h}(T) 
&= 
T_{h}(T)X_{0} +
\int_{0}^{T} T_{h}(T-s)F(X(s))  \nonumber \\
&\breakgap + S_{h}(T-s)P_{h} \Big{(} F(X(s)) - F(X_{h}(s)) \Big{)} ds \nonumber \\
&\breakgap +
\int_{0}^{T} T_{h}(T-s)B(X(s)) + S_{h}(T-s)P_{h} B(X(s)) (I-P_{J}) \nonumber \\
&\breakgap + S_{h}(T-s)P_{h} \Big{(} B(X(s)) - B(X_{h}(s)) \Big{)}P_{J} dW(s). \nonumber
\end{align}
We estimate the error $\Lnorm{E_{h}}^{2}$ using the (standard) \itos{} isometry 
\begin{align}
\Lnorm{E_{h}(T)}^{2}
&\leq
2^{5} \Lnorm{T_{h}(T)X_{0}}^{2} +
2^{5} \Lnorm{ \int_{0}^{T}T_{h}(T-s)F(X(s)) ds }^{2} \nonumber \\
&\breakgap + 
2^{5}\Lnorm{\int_{0}^{T} 
S_{h}(T-s)P_{h} \left( F(X(s)) - F(X_{h}(s)) \right)  ds }^{2} \nonumber \\
&\breakgap +
2^{5}\mathbb{E}\int_{0}^{T} \Bnorm{T_{h}(T-s)B(X(s))}^{2} \nonumber \\
&\bigbreakgap + \Bnorm{S_{h}(T-s)P_{h}(I-P_{J})B(X(s))}^{2} \nonumber \\
&\bigbreakgap +
\Bnorm{S_{h}(T-s)P_{h}P_{J}\Big{(} B(X(s)) - B(X_{h}(s)) \Big{)} }^{2} ds. \nonumber \\
&:= 2^{5}\left( I + II + III + IV + V + VI \right).
\label{eq:Eh}
\end{align}

%
%
We proceed using \Cref{lemma_Th_approx} i) with $\beta=1+r$ and $\alpha=1$ for term I to show that
\begin{equation}\label{eq:spatial:I}
I\leq \CT h^{2(1+r)}T^{-r}\mathbb{E}\norm{X_{0}}^{2}_{1} := \CI{I}h^{2(1+r)}.
\end{equation}

%
%
Part $II$ is estimated using a Taylor expansion of $F(X(s))$ in terms of $F(X(T))$, we have
\begin{align}
II
&:= 
 \Lnorm{ \int_{0}^{T}  T_{h}(T-s)F(X(s)) ds }^{2} \nonumber \\
&=
 \Lnorm{ \int_{0}^{T}  T_{h}(T-s)\left( F(X(T)) + R_{F}(s,T,X(T)) \right) ds}^{2} \nonumber \\
&\leq
 2\Lnorm{\int_{0}^{T}  T_{h}(T-s)F(X(T)) ds }^{2} \nonumber \\
&\breakgap + 
 2\Lnorm{ \int_{0}^{T} T_{h}(T-s)R_{F}(s,T,X(T)) ds}^{2} \nonumber \\
&:= II_{1} + II_{2}. \nonumber
\end{align}

For $II_{1}$ we apply \Cref{lemma_Th_approx} part ii) with $\nu = 1-r$, which requires $r\leq 1$, together with \cref{eq_F_growth} to show,
\begin{align}
II_{1}
&\leq 2\CT h^{2(1+r)} \Lnorm{(-A)^{\frac{-1+r}{2}} F(X(T)) }^{2} \nonumber \\
&\leq 2\CT h^{2(1+r)} \lambda_{1}^{\frac{-1+r}{2}} \Lnorm{F(X(T)) }^{2} \nonumber \\
&\leq 2\CT \lambda_{1}^{\frac{-1+r}{2}} c_{3}^{2} \sup_{s\in [0,T]} \Lnorm{X(s)}^{2(c_{2}+1)}  h^{2(1+r)} \nonumber \\
&:= \CI{II_{1}}h^{2(1+r)}. \label{eq:spatial:II1}
\end{align}
The last line uses \Cref{thm:moment_bounds} for the true solution $X(t)$. Term $II_{2}$ is bounded by \Cref{lemma_Th_approx} i) with $\beta = 1+r-\epsilon$ and $\alpha = 0$, we have
\begin{align}
II_{2} &\leq 
2T  \int_{0}^{T} \Lnorm{T_{h}(T-s)R_{F}(s,T,X(T))}^{2} ds \nonumber \\
&\leq
2\CT T \int_{0}^{T} h^{2(1+r-\epsilon)} (T-s)^{-(1+r-\epsilon)} \Lnorm{R_{F}(s,T,X(T))}^{2} ds. \nonumber
\end{align}
Similarly to the proof of \cref{lemma_remainder_terms} ii), in particular \Cref{eq:RF_epsilon_bnd}, we have that 
\begin{equation}
\Lnorm{R_{F}(s,T,X(T))}^{2} 
\leq 
\bar{D_{q}}^{1/(2^{q-1})}\Kreg^{1-\epsilon} (T-s)^{1-\epsilon } \nonumber,
\end{equation}
for some constant $C_{F}$. Inserting into the bound for $II_{2}$ it follows
\begin{align}
II_{2} 
&\leq
2\CT T \bar{D_{q}}^{1/(2^{q-1})}\Kreg^{1-\epsilon} h^{2(1+r-\epsilon)} \int_{0}^{T}
(T-s)^{-r} ds \nonumber \\
&:=
\CI{II_{2}} h^{2(1+r-\epsilon)} \label{eq:spatial:II2}
\end{align}
In the last step we require $r<1$, otherwise the integral does not converge and $\CI{II_{2}}$ blows up as $r\rightarrow 1$. In practice this blow up is not observed for $r=1$. Equations \Cref{eq:spatial:II1} and \cref{eq:spatial:II2} show that
\begin{equation}\label{eq:spatial:II}
II \leq \CI{II} h^{2(1+r-\epsilon)}, \quad \CI{II}:= \CI{II_{1}} + \CI{II_{2}}.
\end{equation}

%
%
Moving onto $III$, we re-write $F(X(s))$ in terms of $F(X_{h}(s))$ and a remainder term similar to \Cref{lemma_remainder_terms}.

\begin{align}
F(X(s)) 
&= F(X_{h}(s)) \nonumber \\
&\breakgap + \underbrace{\int_{0}^{1} DF \left( X_{h}(s) + \tau (X(s)-X_{h}(s) \right) ( X(s)-X_{h}(s) ) d\tau}_{:=R_{F}(s,s,X_{h}(s))} . \nonumber
\end{align}

Defining the constant
\begin{equation}
\bar{E_{q}}:=
\sup_{s\in [0,T]} \mathbb{E}[ \frac{c_{1}^{2}}{2}(1+2^{c_{2}+1} \norm{X_{h}(s)}^{c_{2}} + 2^{c_{2}} \norm{X(s)}^{c_{2}})^{2^{q}} + \norm{X(s)}^{2} + \norm{X_{h}(s)}^{2} ], \nonumber
\end{equation}
we have that $\bar{E_{q}}<\infty$ by \Cref{thm:moment_bounds} and is independent of $h$ by Proposition 7.3 of \citep{Jentzen2015}. Following the same strategy as the proof of \Cref{lemma_remainder_terms} part ii), we can bound the third term, for any $\delta \in (0,1)$ as

\begin{align}
III&:= 
\Lnorm{\int_{0}^{T} S_{h}(T-s)P_{h}R_{F}(s,s,X_{h}(s)) ds}^{2} \nonumber \\
&\leq 
\CI{III} \int_{0}^{T}  \Lnorm{E_{h}(s)}^{2-\delta} ds, \quad
\CI{III}:=T\bar{E_{q}}^{1/(2^{q-1})}. \label{eq:spatial:III}
\end{align}

%
%
%
Returning to \Cref{eq:Eh}, we proceed to estimate $IV$ by the definition of $\Bnorm{}$ and applying \Cref{lemma_Th_approx} i) with $\beta = 1+r-\epsilon$ and $\alpha = r$ (to the individual terms inside $\Bnorm{}$). That is
\begin{align}
IV
&:=
\int_{0}^{T}\sum_{j=1}^{\infty}
\norm{T_{h}(T-s)B(X(s))Q^{\frac{1}{2} }\chi_{j} }^{2} 
ds \nonumber \\
&\leq
\CT h^{2(1+r-\epsilon)}\int_{0}^{T}\sum_{j=1}^{\infty}
(T-s)^{-1+\epsilon} \norm{A^{\frac{r}{2}} B(X(s))Q^{\frac{1}{2} }\chi_{j} }^{2} 
ds \nonumber \\
&=
\CT h^{2(1+r-\epsilon)}\int_{0}^{T}(T-s)^{-1+\epsilon} \Bnorm{A^{\frac{r}{2}} B(X(s))}^{2} ds. \nonumber
\end{align}
Application of \Cref{assump_B_growth} and \Cref{thm:moment_bounds} completes the estimate of $IV$,
\begin{align}
IV
&\leq
 \frac{\CT T^{\epsilon}}{\epsilon}
h^{2(1+r-\epsilon)}
:= \CI{IV}h^{2(1+r-\epsilon)}.  \label{eq:spatial:IV}
\end{align}

The next term of \Cref{eq:Eh}, $V$, we have for $r\in[0,1)$ and any $\epsilon \in (0,1)$
\begin{align}
V
&:=
\mathbb{E}\int_{0}^{T}\Bnorm{S_{h}(T-s)P_{h}(I-P_{J})B(X(s))}^{2} ds \nonumber \\
&\leq
\mathbb{E}\int_{0}^{T}
\Onorm{(-A)^{\frac{1-\epsilon}{2}} S_{h}(T-s)P_{h}}{H}^{2} \nonumber \\
&\breakgap \times \Onorm{(I-P_{J})(-A)^{-\frac{1+r-\epsilon}{2}}}{H}^{2}
\Bnorm{(-A)^{\frac{r}{2}} B(X(s))}^{2} ds. \nonumber
\end{align}
We use \Cref{lemma_Stv-v} to bound the first term in the integral and \Cref{assump_B_growth} on the third term in the integral to show that
\begin{align}
V
&\leq
\Cs \int_{0}^{T} (T-s)^{-1+\epsilon} ds \hspace{0.1cm} 
\lambda_{J+1}^{-(1+r)+\epsilon}
(1+\sup_{s\in [0,t]} \mathbb{E}\norm{X(s)}^{2}_{r})
 \nonumber \\
&=
\frac{ \Cs T^{\epsilon} }{\epsilon} (1+\sup_{s\in [0,t]} \mathbb{E}\norm{X(s)}^{2}_{r})  \lambda_{J+1}^{-(1+r)+\epsilon} \nonumber \\
&:= \CI{V}\lambda_{J+1}^{-(1+r)+\epsilon}. \label{eq:spatial:V}
\end{align}

The final term $VI$ of \cref{eq:Eh} is estimated by \Cref{assump_B_lip} together with the boundedness of the semigroup and the boundedness of $\Onorm{P_{J}}{H}$ and $\Onorm{P_{h}}{H}$. We have \begin{align}
VI &:= \mathbb{E}\int_{0}^{T}\Bnorm{S_{h}(T-s)P_{h}P_{J}\Big{(} B(X(s)) - B(X_{h}(s)) \Big{)} }^{2} ds \nonumber \\
&\leq
\LB^{2}\int_{0}^{T} \ \Lnorm{E_{h}(s)}^{2} ds. \label{eq:spatial:VI}
\end{align}

Returning to \Cref{eq:Eh}, \cref{eq:spatial:I,eq:spatial:II,eq:spatial:III,eq:spatial:IV,eq:spatial:V,eq:spatial:VI}
show that (for $C_{1}:= \CI{I} + \CI{II} + \CI{IV}$)
\begin{align}
\Lnorm{E_{h}(T)}^{2}
&\leq
C_{1}h^{2(1+r-\epsilon)} + \CI{V}\lambda_{J+1}^{-(1+r)+\epsilon} \nonumber \\
&\breakgap +  \LB^{2}\int_{0}^{T}  \Lnorm{E_{h}(s)}^{2} ds \nonumber \\
&\breakgap + \CI{III}\int_{0}^{T} \Lnorm{E_{h}(s)}^{2-\delta} ds.\nonumber
\end{align}

Application of \Cref{lemma:nonlinGronwall} yields
\begin{align}
\Lnorm{E_{h}(T)}^{2}
&\leq
e^{\LB^{2}T}\left[
\left(
C_{1}h^{2(1+r-\epsilon)}+ 
\CI{IV}\lambda_{J+1}^{-(1+r)+\epsilon}
\right)^{\delta} \right. \nonumber \\
&\breakgap + \left. \frac{\CI{III}}{\LB^{2}}(1-e^{-\epsilon \LB^{2}T})
\right]^{1/\delta} \nonumber \\
&\leq
e^{\LB^{2}T}\left[
\left(
C_{1}h^{2(1+r-\epsilon)}+ 
\CI{IV}\lambda_{J+1}^{-(1+r)+\epsilon}
\right)^{\delta}
+ \CI{III}T \delta
\right]^{1/\delta} \nonumber \\
&\leq
e^{\CI{III}T}e^{e^{1/e}} e^{\LB^{2}T} 
\left(
C_{1}h^{2(1+r-\epsilon)}+ 
\CI{IV}\lambda_{J+1}^{-(1+r)+\epsilon}
\right) \nonumber .
\end{align}

\end{proof}

\begin{proof}[Proof of Theorem \ref{thm:Ek}]
The time discretisation error, $E_{k}(t_{n})$, must be considered over one step with conditional expectation. This is to ensure the time-step size selection, computed using the current solution, does not bias the numerical scheme. The time discretisation error can be written as
\begin{align}
& E_{k}(t_{n+1}) = 
S_{h}(t_{n+1}-t_{n})P_{h}E_{k}(t_{n}) \nonumber \\
& \breakgap + 
\int_{t_{n}}^{t_{n+1}} S_{h}(t_{n+1}-s)P_{h} F(X_{h}(s)) - S_{h}(t_{n+1}-s)P_{h}F(X_{h}^{n})  ds \nonumber\\
& \breakgap +
\int_{t_{n}}^{t_{n+1}} S_{h}(t_{n+1}-s)P_{h}B(X_{h}(s)) - 
S_{h}(t_{n+1}-t_{n})P_{h}B(X_{h}^{n})  dW(s) \nonumber
\end{align}
Over one step, we Taylor expand the drift and diffusion coefficients around $X_{h}(t_{n})$ as follows,

\begin{align}
E_{k}(t_{n+1})
&=
S_{h}(t_{n+1}-t_{n})P_{h}E_{k}(t_{n}) \nonumber \\
& \breakgap + 
\underbrace{
\int_{t_{n}}^{t_{n+1}} S_{h}(t_{n+1}-s)P_{h}\Big{(} F(X_{h}(t_{n})) - F(X_{h}^{n}) \Big{)} ds
}_{:= I} \nonumber \\
&\breakgap +
\underbrace{
\int_{t_{n}}^{t_{n+1}} 
(S_{h}(t_{n+1}-s) - S_{h}(t_{n+1}-t_{n}))P_{h}B(X_{h}(t_{n})) dW(s)}_{:= II_{1}} \nonumber \\
&\breakgap +
\underbrace{S_{h}(t_{n+1}-t_{n})P_{h}(B(X_{h}(t_{n}))-B(X_{h}^{n}))\Delta W_{n+1}
}_{:= II_{2}} \nonumber \\
&\breakgap +
\underbrace{
\int_{t_{n}}^{t_{n+1}} S_{h}(t_{n+1}-s)P_{h} R_{F}(s,t_{n},X_{h}(t_{n})) ds
}_{:= \tilde{R}_{F} } \nonumber\\
&\breakgap +
\underbrace{
\int_{t_{n}}^{t_{n+1}} S_{h}(t_{n+1}-s)P_{h} R_{B}(s,t_{n},X_{h}(t_{n})) dW(s)
}_{:= \tilde{R}_{B} } \nonumber\\
&:=
S_{h}(t_{n+1}-t_{n})P_{h}E_{k}(t_{n}) + R_{K}, \label{eq_Ek_def}
\end{align}
where we have defined $R_{K} := I + II + \tilde{R}_{F} + \tilde{R}_{B}$ and $II:= II_{1} + II_{2}$. Applying the operator $S_{h}(T-t_{n+1})$ to \Cref{eq_Ek_def} yields
\begin{align}\label{eq:St_Ek}
\SE{n+1}
&=
\SM{n}P_{h}E_{k}(t_{n}) + \SM{n+1}R_{K}(t_{n+1}). \nonumber \\
&:=
E_{S}^{n} + I_{S} + II_{S} + \tilde{R}_{SF} + \tilde{R}_{SB}. 
\end{align}
The norm $\norm{E_{S}^{n+1}}^{2}$ is expanded and re-written using the inequality 
\\
$\langle A,B \rangle \leq \frac{1}{2}( \norm{A}^{2} + \norm{B}^{2} )$ as
\begin{align}
\norm{E_{S}^{n+1}}^{2}
&= 
\langle E_{S}^{n} + I_{S} + II_{S} + \tilde{R}_{SF} + \tilde{R}_{SB} , E_{S}^{n+1}\rangle \nonumber\\
&\leq
\frac{1}{2} ( \norm{E_{S}^{n}}^{2} + \norm{E_{S}^{n+1}}^{2} ) 
+
\langle I_{S} + II_{S} + \tilde{R}_{SF} + \tilde{R}_{SB},  E_{S}^{n+1}\rangle. \nonumber
\end{align}
Therefore the error over one time-step can be bound by
\begin{equation}
\norm{E_{S}^{n+1}}^{2} - \norm{E_{S}^{n}}^{2}
\leq 2 \langle  I_{S} + II_{S} + \tilde{R}_{SF} + \tilde{R}_{SB}, E_{S}^{n+1} \rangle. \nonumber
\end{equation}
Expansion of both $R_{SK}^{n+1}$ and $E_{S}^{n+1}$ inside the inner product then collation of the stochastic terms shows that
\begin{align}
\norm{E_{S}^{n+1}}^{2} - \norm{E_{S}^{n}}^{2}
&\leq
2\Big{(} 
\langle I_{S}, E_{S}^{n} + I_{S} + II_{S} + \tilde{R}_{SF} + \tilde{R}_{SB} \rangle \nonumber \\
&+
\langle \tilde{R}_{SF}, E_{S}^{n} + I_{S} + II_{S} + \tilde{R}_{SF} + \tilde{R}_{SB} \rangle \nonumber \\
&+
\langle II_{S} + \tilde{R}_{SB}, E_{S}^{n} + I_{S} + II_{S} + \tilde{R}_{SF} + \tilde{R}_{SB} \rangle 
\Big{)} \nonumber .
\end{align}

Further applications of the inequality $\langle A,B \rangle \leq \frac{1}{2}( \norm{A}^{2} + \norm{B}^{2} )$ and rearrangement yields,
\begin{align}
\norm{E_{S}^{n+1}}^{2} - \norm{E_{S}^{n}}^{2}
&\leq
2\langle I_{S}, E_{S}^{n} \rangle + 2\langle \tilde{R}_{SF}, E_{S}^{n} \rangle \nonumber \\
&+
4\norm{I_{S}}^{2} +
7\norm{\tilde{R}_{SF}}^{2} +
8\norm{II_{S}}^{2} +
8\norm{\tilde{R}_{SB}}^{2} \nonumber \\
&+ 
2\langle 2I_{S}+ E_{S}^{n}, II_{S} + \tilde{R}_{SB} \rangle. \label{eq_Ek_inner_prod_master}
\end{align}

The expected value, conditioned on the discrete filtration $\mathcal{G}_{n}$, of the final term $\langle 2I_{S}+ E_{S}^{n}, II_{S} + \tilde{R}_{SB} \rangle$, is zero. This is true since both $I_{S}$ and $E_{S}^{n}$ are $\mathcal{G}_{n}$ measurable and $\mathbb{E}[ II_{S} + \tilde{R}_{SB} \cond{G}{n}=0$. We bound each of the terms in \Cref{eq_Ek_inner_prod_master} in turn. Boundedness of the semigroup and \Cref{assump_F_local_lipschitz} on the drift are applied to the first term to show
\begin{align}
&\langle I_{S}, E_{S}^{n} \rangle \nonumber \\
&= 
\left\langle \int_{t_{n}}^{t_{n+1}} S_{h}(T-s)P_{h}\Big{(} F(X_{h}(t_{n})) - F(X_{h}^{n}) \Big{)} ds, S_{h}(T-t_{n})P_{h}E_{k}(t_{n}) \right\rangle  \nonumber\\
&\leq
\Delta t_{n+1}
\left\langle F(X_{h}(t_{n})) - F(X_{h}^{n}), E_{k}(t_{n}) \right\rangle \nonumber\\
&\leq
\LF\Delta t_{n+1}\norm{E_{k}(t_{n})}^{2}. \label{eq_EK_prod_IS}
\end{align}

The second term is estimated using the fact $\norm{E_{S}^{n}}$ is $\mathcal{G}_{n}$ measurable, \Cref{lemma_remainder_terms}, the boundedness of the semigroup and Young's inequality, 
\begin{align}
\Econd{ \langle \tilde{R}_{SF}, E_{S}^{n} \rangle }
&\leq
\Econd{ \norm{\tilde{R}_{F}}\norm{E_{S}^{n}} } \nonumber\\
&\leq
\norm{E_{k}(t_{n})}\Econd{ \norm{\tilde{R}_{F}}} \nonumber\\
&\leq \KF \Delta t_{n+1}\norm{E_{k}(t_{n})} \quad a.s.\nonumber\\
&\leq \frac{1}{2}\KF^{2} \Delta t_{n+1} + \frac{1}{2} \Delta t_{n+1}\norm{E_{k}(t_{n})}^{2} \quad a.s. \label{eq_EK_prod_RSF}
\end{align}

For $\norm{I_{S}}^{2}$ we use Jensen's inequality, the boundedness of the semigroup and the admissibility of the time-stepping strategy,
\begin{align}
\norm{I_{S}}^{2}
&=
\norm{\int_{t_{n}}^{t_{n+1}} 
S_{h}(T-s)P_{h}\Big{(} F(X_{h}(t_{n})) - F(X_{h}^{n}) \Big{)} ds}^{2} \nonumber\\
&\leq
\Big{(}\int_{t_{n}}^{t_{n+1}} 
\norm{S_{h}(T-s)P_{h}}_{\mathcal{L}(H)} \norm{F(X_{h}(t_{n})) - F(X_{h}^{n})} ds \Big{)}^{2} \nonumber\\
&\leq
\Delta t_{n+1}^{2}\norm{F(X_{h}(t_{n})) - F(X_{h}^{n})}^{2} \nonumber\\
&\leq
2\Delta t_{n+1}^{2} \left( \norm{F(X_{h}(t_{n}))}^{2} + \norm{F(X_{h}^{n})}^{2} \right) \nonumber\\
&\leq
2\Delta t_{n+1}^{2} \left( \norm{F(X_{h}(t_{n}))}^{2} + 
R_{1} + R_{2}\norm{X_{h}^{n}}^{2} \right) \nonumber\\
&\leq
2\Delta t_{n+1}^{2} 
\left( (1+\norm{X_{h}(t_{n})}^{c+1})^{2} + R_{1}  \nonumber \right. \\
& \quad\quad + 2R_{2}\norm{X_{h}(t_{n})}^{2} + \left.2R_{2}\norm{E_{k}(t_{n})}^{2}
\right). \nonumber
\end{align}
Therefore
\begin{equation}
\Lcondnorm{I_{S}}^{2} \leq K_{5}\Delta t_{n+1}^{2} + 4R_{2}\Delta t_{n+1}^{2}\norm{E_{k}(t_{n})}^{2} \quad a.s, \label{eq_EK_IS}
\end{equation}
for the a.s. finite $\mathcal{G}_{n}$-measurable random variable (a.s. finite and independent of $h$ by Proposition 7.3 of \citep{Jentzen2015})
\begin{align}
K_{5}&:= 2\left( 2+ 2\Econd{ \norm{X_{h}(t_{n})}^{2(c+1)} } + R_{1} + 2R_{2}\Econd{ \norm{X_{h}(t_{n})}^{2} } \right). \nonumber
\end{align}
The random variable $K_{5}$ will later be bounded by
\begin{align*}
\supTE{K_{5}} 
&\leq 4+R_{1} + 4\supTE{ \norm{X_{h}(s)}^{2(c+1)} } + 2R_{2} \supTE{ \norm{X_{h}(s)}^{2} } \nonumber \\
&=: \tilde{K_{5}}.
\end{align*}
\Cref{lemma_remainder_terms} and the boundedness of the semigroup implies
\begin{align}
\Lcondnorm{ \tilde{R}_{SF} }^{2}
&\leq 
\KFsq \Delta t_{n+1} \quad a.s, \label{eq:EK:RSF} \\
\Lcondnorm{\tilde{R}_{SB}}^{2} 
&\leq
\KB \Delta t_{n+1} \quad a.s. \label{eq:EK:RSB} 
\end{align}
The next term to estimate in \Cref{eq_Ek_inner_prod_master} is $\norm{II_{S}}^{2}$, 
\begin{align}
&\Lcondnorm{II_{S}}^{2} \nonumber \\
&\leq
2\Econd{ \norm{\int_{t_{n}}^{t_{n+1}} 
(S_{h}(T-s) - S_{h}(T-t_{n}))P_{h}B(X_{h}(t_{n})) 
dW(s)}^{2} } \nonumber\\
&\breakgap +
2\Econd{ \norm{S_{h}(T-t_{n})P_{h}(B(X_{h}(t_{n}))-B(X_{h}^{n})) \Delta W_{n+1}}^{2} } \nonumber\\
&:= II_{S1} + II_{S2}. \nonumber
\end{align}

Firstly using \Cref{thm_cond_ito_isometry}, \Cref{assump_B_lip} and \Cref{lemma_Stv-v} we have for $\epsilon \in (0,1)$ that $II_{S1}$ can be bounded by
\begin{align}
II_{S1}
&\leq
2\Econd{ \int_{t_{n}}^{t_{n+1}} 
\Onorm{ (-A_{h})^{\frac{1-\epsilon}{2}}S_{h}(T-s) }{H}^{2} \right. \nonumber \\
&\breakgap\times \left.\Onorm{(-A_{h})^{-\frac{1-\epsilon}{2}}(I - S_{h}(s-t_{n}) )P_{h}}{H}^{2}
\Bnorm{B(X_{h}(t_{n})) }^{2} ds } \nonumber\\
&\leq
2\Cs^{4} \Delta t_{n+1}^{1-\epsilon}\int_{t_{n}}^{t_{n+1}} (T-s)^{-1+\epsilon} ds
\hspace{0.1cm}
\Econd{
\norm{B(X_{h}(t_{n})) 
}^{2}_{L^{2}_{0}} 
} \nonumber \\
&\leq
2K_{6}\Delta t_{n+1}^{1-\epsilon}\int_{t_{n}}^{t_{n+1}} (T-s)^{-1+\epsilon} ds \quad a.s. \label{eq_EK_II_S1}
\end{align}
With the a.s. finite $\mathcal{G}_{n}$-measurable random variable $K_{6}$ defined as
\begin{equation}
K_{6}:= 2\Cs^{4}\LBr \Econd{2+2\norm{X_{h}(t_{n})}^{2} }, \nonumber
\end{equation}
and later bound by
\begin{equation}
\supTE{K_{6}} \leq 2\Cs^{4}\LBr \supTE{2+2\norm{X_{h}(s)}^{2} }=:\tilde{K_{6}}.
\end{equation}
To bound $II_{S2}$ we use \Cref{thm_cond_ito_isometry}, the boundedness of the semigroup and $P_{h}$ operator with assumption \ref{assump_B_lip} to show
\begin{align}
II_{S2} 
&= 
2\Econd{ \norm{S_{h}(T-t_{n})P_{h}(B(X_{h}(t_{n}))-B(X_{h}^{n})) \Delta W_{n+1}}^{2}
}\nonumber \\
&\leq
2\Delta t_{n+1} \norm{B(X_{h}(t_{n}))-B(X_{h}^{n})}^{2}_{L^{2}_{0}} \nonumber\\
&\leq
2\LB^{2}\Delta t_{n+1} \norm{E_{k}(t_{n})}^{2}. \label{eq_EK_II_S2}
\end{align}

Combining the estimates in  \Cref{eq_EK_prod_IS,eq_EK_prod_RSF,eq:EK:RSF,eq:EK:RSB,eq_EK_IS,eq_EK_II_S1,eq_EK_II_S2} for \Cref{eq_Ek_inner_prod_master} implies
\begin{align}
\Econd{ \norm{E_{S}^{n+1}}^{2} } - 
\norm{E_{S}^{n}}^{2} 
&\leq 
C_{1}\Delta t_{n+1}
+
C_{2}\Delta t_{n+1} \norm{E_{k}(t_{n})}^{2}  \nonumber \\
&\breakgap +
C_{3}\Delta t_{n+1}^{1-\epsilon}\int_{t_{n}}^{t_{n+1}}(T-s)^{-1+\epsilon} ds, \quad a.s. \nonumber 
\end{align}
with $C_{1}:= \KF^{2} + 7\KFsq + 8\KB + 4K_{5}\Delta t_{n+1}$, 
$C_{2}:=2\LF + 16R_{2}\Delta t_{n} + 16\LB^{2}$ and
$C_{3}:= 16K_{6}$. Summing both sides from $n=0$ to $n=N-1$ and taking expectations, noting that $E_{S}^{N}:=S(T-T)E_{k}(T)=E_{k}(T)$, shows
\begin{align}
\mathbb{E}[ \norm{E_{k}(T)}^{2} ]
&\leq
\sum_{n=0}^{N-1}\Delta t_{n+1} \mathbb{E}[C_{1}] + 
\sum_{n=0}^{N-1}\Delta t_{n+1}C_{2} \mathbb{E}[\norm{E_{k}(t_{n})}^{2}] \nonumber \\
&\breakgap + 
\Delta t_{\max}^{1-\epsilon} \mathbb{E}[C_{3}]
\int_{0}^{T}(T-s)^{-1+\epsilon} ds.  \nonumber 
\end{align}
Taking supremum over $s\in [0,T]$ on the right hand side yields
\begin{align}
\mathbb{E}[ \norm{E_{k}(T)}^{2} ]
&\leq
\frac{T^{\epsilon} }{\epsilon}\Delta t_{\max}^{1-\epsilon}\supTE{C_{3}}  +
\sum_{n=0}^{N-1}\Delta t_{n+1}\supTE{ C_{1} } + 
\sum_{n=0}^{N-1}\Delta t_{n+1}C_{2}\norm{E_{k}(t_{n})}^{2}.
\label{eq_Ek_sum}
\end{align}
We apply \cref{lemma_cond_regularity_time,lemma_remainder_terms} to see that
\begin{align}
\supTE{ C_{1} } \leq \tilde{M_{1}}^{2}\Konep{2} \Delta t_{n+1}^{1-\epsilon} + 7\tilde{\KFsq}\Delta t_{n+1}^{2-\epsilon} + 8\tilde{\KB} \Delta t_{n+1}^{1-\epsilon} + 4\tilde{K_{5}}\Delta t_{n+1} =: \Delta t_{n+1}^{1-\epsilon}\Gamma, \nonumber
\end{align}
and
\begin{equation}
\supTE{C_{3}} \leq 16 \tilde{K_{6}} \nonumber .
\end{equation}
Defining $\Lambda:= \Gamma + \frac{16 \tilde{K_{6}}T^{\epsilon} }{\epsilon}$ and substitution into \cref{eq_Ek_sum} shows that
\begin{equation}
\mathbb{E}[ \norm{E_{k}(T)}^{2} ]
\leq
\Lambda \Delta t_{\max}^{1-\epsilon} + 
\sum_{n=0}^{N-1}\Delta t_{n+1}C_{2}\norm{E_{k}(t_{n})}^{2}. \label{eq_Ek_sum2}
\end{equation}

Finally, application of the discrete Gronwall lemma to \Cref{eq_Ek_sum2} yields,
\begin{align}
\mathbb{E}[ \norm{E_{k}(T)}^{2} \cond{G}{n}
&\leq 
\Lambda \Delta t_{\max}^{1-\epsilon}
\exp (C_{2}T). \label{eq_Ek_bound}
\end{align}
Combining the estimate from \Cref{thm:Ek} with \Cref{eq_Ek_bound} yields the result.
\end{proof}

To conclude the section we give the proof of \Cref{corr:phi0}. 

\begin{proof}[Proof of Corollary \ref{corr:phi0}]
If the $\varphi_{0}$ approximation is used instead of $\varphi_{1}$ for the drift contribution, that is approximations \cref{eq:phi0_approx} instead of \cref{eq:phi1_approx}, an additional term
\begin{equation}
III_{S}
:=
\int_{t_{n}}^{t_{n+1}} \left( S_{h}(T-s) - S_{h}(T-t_{n}) \right) P_{h} F(X_{h}(t_{n})) ds \nonumber ,
\end{equation}
appears in \cref{eq:St_Ek}. This introduces additional inner product terms against the existing terms in \cref{eq_Ek_inner_prod_master}. All terms not involving $III_{S}$ can be dealt with as before. The term $III_{S}$ is $\mathcal{G}_{n}$-measurable and therefore the inner product with the stochastic terms has zero mean. The norm of $III_{S}$ can be bound as,
\begin{align}
&\Econd{\norm{III_{S}}^{2}} \nonumber \\
&:=
\Econd{\norm{
\int_{t_{n}}^{t_{n+1}} \left( S_{h}(T-s) - S_{h}(T-t_{n}) \right) P_{h} F(X_{h}(t_{n})) ds}^{2}} \nonumber \\
&\leq
\Delta t_{n+1}\int_{t_{n}}^{t_{n+1}} 
\Onorm{ (-A_{h})^{\frac{1-\epsilon}{2}}S_{h}(T-s) }{H}^{2} \nonumber \\
&\breakgap\times
\Onorm{(-A_{h})^{-\frac{1-\epsilon}{2}}(I - S_{h}(s-t_{n}) )}{H}^{2} ds \hspace{0.2cm}
\Econd{\norm{P_{h}F(X_{h}(t_{n})) }^{2}}  \nonumber \\
&\leq
\Cs^{4}\Delta t_{n+1}^{2-\epsilon} \int_{t_{n}}^{t_{n+1}}(T-s)^{-1+\epsilon} ds. \nonumber
\end{align}
The term $\Econd{ \langle III_{S}, E_{S}^{n} \rangle}$ is bounded via the boundedness of the semigroup, Jensen's inequality and Young's inequality
\begin{align}
\Econd{ \langle III_{S}, E_{S}^{n} \rangle}
&\leq 
\norm{E_{S}^{n}}\Econd{ \norm{III_{S}} } \nonumber \\
&\leq \norm{E_{S}^{n}}\left(\Econd{ \norm{III_{S} }^{2} } \right)^{1/2} \nonumber \\
&\leq
\Cs^{2}\Delta t_{n+1}^{1-\epsilon/2}\norm{E_{k}(t_{n})}\left( \int_{t_{n}}^{t_{n+1}}(T-s)^{-1+\epsilon} ds\right)^{\frac{1}{2}}. \nonumber \\
&\leq
\Cs^{4}\Delta t_{n+1}^{1-\epsilon}\int_{t_{n}}^{t_{n+1}}(T-s)^{-1+\epsilon} ds + \Delta t_{n+1}\norm{E_{k}(t_{n})}^{2}. \nonumber
\end{align}
The two remaining inner products of $III$ against $I_{S}$ and $\tilde{R}_{SF}$ contain higher powers of $\Delta t_{n+1}$, leaving the final order of convergence unchanged after application of the discrete Gronwall lemma.
\end{proof}

\begin{remark}
We remark upon the possible dependence of the constants $\Lambda$ and $C_{2}$ in \cref{eq_Ek_bound} on $\rho$. The constant $\rho$ may enter the analysis via the constants $R_{1}$ and $R_{2}$ of \Cref{defn:addmissable}. Upon inspection of the proof of \Cref{thm:Ek} it is seen that $R_{1}$ and $R_{2}$ will be present linearly in $\Lambda$ and $\Delta t_{n+1}R_{2}$ will be present in $C_{2}$. The choice of time-stepping rule then determines the dependence of the overall error constant $\Cmain$ on $\rho$. For example, if rule i) of \Cref{lemma_timestep_selection} is utilized, then $R_{1}=\rho^{2\theta}$ and $R_{2}=0$, hence the constant $\Cmain \propto \rho^{2\theta}$.
\end{remark}
\section{Conclusion}
We have constructed an adaptive time-stepping scheme for semilinear SPDEs with non-Lipschitz drift coefficients. We have proven the scheme strongly convergences, with respect to $\Delta t_{\max}$, with the same rate as for globally Lipschitz drift. It is important to note the convergence of the adaptive scheme is valid only when the time-step size stays above $\Delta t_{\min}$. If the minimum time-step size is ever required then an alternative ``backstop'' scheme must be employed to ensure overall convergence. In practice, we have shown that for moderate choices in the fixed ratio $\Delta t_{\max}/\Delta t_{\min}$, we always maintain a time-step size above $\Delta t_{\min}$. Two numerical experiments in \Cref{sec:numerics} confirm the theoretical convergence rates and highlight the efficiency gains over alternative fixed time-step methods for general non-Lipschitz drift.
\section*{Acknowledgements}
The authors would like to thank C\'onall Kelly (University College Cork) for his helpful discussions.

\bibliography{/home/sc58/Documents/latex/references/library}{}

\end{document}